\title{The homotopy Leray spectral sequence}
\author{Aravind Asok, Fr\'ed\'eric D\'eglise, Jan Nagel}
\date{December 2018}
\newtheorem{thm}{Theorem}[subsection]
\newtheorem{prop}[thm]{Proposition}
\newtheorem{lm}[thm]{Lemma}
\newtheorem{cor}[thm]{Corollary}
\newtheorem{thmi}{Theorem}
\newtheorem{propi}[thmi]{Proposition}
\theoremstyle{remark}
\newtheorem{rem}[thm]{Remark}
\newtheorem{ex}[thm]{Example}
\theoremstyle{definition}
\newtheorem{df}[thm]{Definition}
\newtheorem{num}[thm]{}
\numberwithin{equation}{thm}
\numberwithin{equation}{thm}
\DeclareMathOperator{\trd}{trd}
\DeclareMathOperator{\rk}{rk}
\newcommand{\hrt}[1]{#1^\heartsuit}
\DeclareMathOperator{\Th}{Th}
\DeclareMathOperator{\uK}{\underline K}
\DeclareMathOperator{\Mbm}{M^{BM}}
\newcommand{\dtw}[1]{\langle#1\rangle}
\newcommand{\gtw}[1]{\{#1\}}
\newcommand{\pplim}[1] {\underset{#1}{"\!\varprojlim\!"}}
\newcommand{\mH}{\mathcal H} 
\newcommand{\T}{\mathscr T} 
\DeclareMathOperator{\pts}{Pts}
\newcommand{\cF}{\mathcal F}
\newcommand{\smod}[1]{#1\!-\!mod}
\newcommand{\MGL}{\mathbf{MGL}}
\newcommand{\KGL}{\mathbf{KGL}}
\newcommand{\HH}{\mathbf H}
\newcommand{\MGLmod}{\smod{\MGL}}
\newcommand{\ab}{\mathscr Ab}
\newcommand{\SH}{\mathrm{SH}}
\newcommand{\eff}{\mathit{eff}}
\newcommand{\DM}{\mathrm{DM}}
\DeclareMathOperator{\Hom}{Hom}
\DeclareMathOperator{\uHom}{\underline{Hom}} 
\DeclareMathOperator{\spec}{Spec}
\newcommand{\ilim} { \varinjlim }
\newcommand{\pilim}[1] { \underset{#1}{"\varinjlim\!\!"} }
\DeclareMathOperator{\codim}{codim}
\newcommand{\ZZ} {\mathbf Z}
\newcommand{\QQ} {\mathbf Q}
\newcommand{\CC} {\mathbf C}
\renewcommand{\AA} {\mathbf A}
\newcommand{\GG} {\mathbf{G}_m}
\newcommand{\E}{\mathbb E}
\newcommand{\F}{\mathbb F}
\newcommand{\G}{\mathbb G}
\newcommand{\un}{\mathbbm 1} 
\newcommand{\pur}{\mathfrak p} 
\newcommand{\flag}{\mathcal F}
\newcommand{\A}{\mathscr A}
\newcommand{\Addresses}{{
 \bigskip
 \footnotesize

 A.~Asok, Department of Mathematics, University of Southern California, 3620 S. Vermont Ave.,
  Los Angeles, CA 90089-2532, United States; \textit{E-mail address:} \url{asok@usc.edu}

 \medskip

 F.~D{\'e}glise, IMB - UMR5584, 9 avenue Alain Savary 21000 Dijon Cedex; France \textit{E-mail address:} \url{frederic.deglise@u-bourgogne.fr}

 \medskip

 J.~Nagel, IMB - UMR5584, 9 avenue Alain Savary 21000 Dijon Cedex; France \textit{E-mail address:} \url{Johannes.Nagel@u-bourgogne.fr}

}}
\newcommand{\aone}{\mathbb A^1}
\newcommand{\Spec}[1]{\operatorname{Spec}(#1)}
\begin{document}

\begin{abstract}
In this work, we build a spectral sequence in motivic homotopy that is analogous to both the Serre spectral sequence in algebraic topology and the Leray spectral sequence in algebraic geometry.  Here, we focus on laying the foundations necessary to build the spectral sequence and give a convenient description of its $E_2$-page.  Our description of the $E_2$-page is in terms of homology of the local system of fibers, which is given using a theory similar to Rost's cycle modules.  We close by providing some sample applications of the spectral sequence and some hints at future work.
\end{abstract}

\maketitle
\tableofcontents

\section{Introduction}

The goal of this paper is to study an algebro-geometric version of the Leray--Serre spectral sequence for generalized cohomology theories.  To explain the setup, suppose one is given a topological space $X$, a sheaf $\cF$ of abelian groups on $X$ and a filtration
\[
X_{\bullet} = X_0\subset\ldots\subset X_p\subset X_{p+1}\subset\ldots\subset X_n =X
\]
by closed subsets.  One naturally associates an exact couple with the preceding data and obtains a spectral sequence of the form:
\begin{eqnarray*}
E_1^{p,q}(X_{\bullet},\cF) & = & H^{p+q}(X_p\setminus X_{p-1},\cF)\Rightarrow H^{p+q}(X,\cF).
\end{eqnarray*}
If $X_{\bullet}$ is cellular with respect to $\cF$, i.e., if $H^i(X_p\setminus X_{p-1},\cF) = 0$ for all $i\ne p$, then there are isomorphisms of the form $H^k(X,\cF)\cong E_2^{k,0}(X_{\bullet},\cF)$.  If $f:X\to B$ is a continuous map of topological spaces and $B_{\bullet}$ is a filtration of $B$ that is cellular with respect to the direct image sheaves $R^qf_*\cF$ for all $q$, then the $E_2$--term of Leray spectral sequence
\[
E_2^{p,q} = H^p(B,R^qf_*\cF)\Rightarrow H^{p+q}(X,\cF)
\]
is isomorphic to $E_2^{p,q}(X_{\bullet},\cF)$, where $X_{\bullet}$ is the inverse image of $B_{\bullet}$.

\medskip On the other hand, suppose
\[
F \longrightarrow X \stackrel{f}{\longrightarrow} B
\]
is a Serre fibration of topological spaces, where $B$ has the homotopy type of a (connected) finite CW complex, and $E$ is a (generalized) cohomology theory in the sense of classical stable homotopy theory.  One may consider an associated Atiyah--Hirzebruch spectral sequence (see, e.g., \cite[\S 9.2-9.5]{DavisKirk}): the $E_2$-page of this spectral sequence is given in terms of the ordinary (co)homology of $B$ with coefficients in local systems attached to the $E$-(co)homology of $F$ and converges to the $E$-(co)homology of $X$.  When $E$ is ordinary cohomology and $\mathscr{F}$ is the constant sheaf $\ZZ_X$, the two spectral sequences coincide.  Indeed, Serre showed that the direct image sheaves $R^qf_*\ZZ_X$ are local systems on $B$ depending only on the cohomology of the fiber $F$ and the action of the fundamental group of $B$ on these cohomology groups.


\medskip  To explain our algebro--geometric analog, recall that Arapura showed the Leray spectral sequence is ``motivic" \cite{Arapura}.  In more detail, suppose $f: X \to B$ is a projective morphism of complex quasi--projective algebraic varieties.
After reducing to a situation where $B$ is affine (via the Jouanolou trick), work of Beilinson and Nori \cite{Nori} shows that any constructible sheaf on $B$ can be made cellular with respect to a filtration by closed algebraic subsets. In this situation, Arapura compares the Leray spectral sequence
\[
E_2^{a,b}(f) = H^a(B,R^qf_*\ZZ_X)\Rightarrow H^{a+b}(X,\ZZ_X)
\]
and the spectral sequence associated with the skeletal filtration and uses this to show that the Leray spectral sequence essentially reflects suitably functorial algebro-geometric structure present on cohomology groups (e.g., it lifts to Nori's category of mixed motives).   
A similar result holds for the perverse Leray spectral sequence
\[
E_2^{a,b} = H^a(B,{^p}R^bf_*\ZZ_X)\Rightarrow H^{a+b}(X,\ZZ_X)
\]
that is obtained by replacing the classical truncation functor $\tau$ by the perverse truncation ${^p}\tau$ \cite{dC-M}.

\medskip
In this paper, we consider a variant of this setup.
 Let us now consider a simple case to highlight our considerations.
 Suppose $k$ is a field, $B$ is a finite dimensional irreducible smooth $k$-variety, and $f:  X \to B$ is a smooth morphism of $k$-varieties.  As usual, $f$ may be thought of as an \'etale locally trivial fibration in algebraic geometry.  We write $B^{(n)}$ for the set of codimension $n$ points of $B$.  If $B$ has dimension $d$, we view the collection $B^{(i)}$ as $i$ ranges from $0$ through $d$ as an algebro-geometric analog of the skeletal filtration of $B$.  In this situation, the scheme-theoretic fiber $X_{b}$ for $b \in B^{(i)}$ is then a smooth variety over $b$.

\medskip
There is a comparison theorem between the Leray spectral sequence and the spectral sequence associated to the Bloch-Ogus complex. If $H^*(X,A)$ is a suitable cohomology theory defined using a Grothendieck topology that is finer than the Zariski topology (e.g., \'etale cohomology, or Betti cohomology over $\CC$), one can consider the Leray spectral sequence associated with the morphism of sites $\pi:X_{\rm fine}\to X_{Zar}$. The higher direct image sheaves $R^q\pi_*A$ are the Zariski sheaves $\mH^q$ associated with the presheaves $U\mapsto H^q(\pi^{-1}(U),A)$. Bloch and Ogus show that the Leray spectral sequence $E_2^{p,q} = H^p(X,\mH^q)\Rightarrow H^{p+q}(X,A)$ can be identified from the $E_2$--term onward with the spectral sequence
\[
E_1^{p,q} = \oplus_{x\in X^{(p)}} H^{q-p}(k(x))\Rightarrow N^{\bullet}H^{p+q}(X)
\]
that gives the coniveau filtration on cohomology. (The argument uses Deligne's technique of ``d\'ecalage", see \cite{Paranjape}.)  Furthermore, there are variants of the previous constructions where one  replaces cohomology by Borel--Moore homology and works with the niveau (rather than coniveau) filtration. More generally, one can work with the bivariant theory
\[
H^i(X \buildrel{f}\over\rightarrow Y) = \Hom(Rf_!\QQ_X,\QQ_Y[i])
\]
that simultaneously generalizes both Borel--Moore homology and cohomology.

\medskip
Building on this analysis, by analyzing formal properties of Gersten complexes, Rost developed a notion of ``local coefficient system" on a $k$-scheme $B$, which he called a cycle module.  Given a morphism $f:X\to B$ and a cycle module $M$ on $X$, Rost defined Chow groups with coefficients in $M$, $A_*(X,M)$.  For example, there is a cycle module built out of Milnor K-theory for which the associated Chow groups with coefficients coincide with usual Chow groups.  Finally, Rost constructed a spectral sequence of the form:
\[
E^2_{p,q} = A_p(B,A_q(X,M))\Rightarrow A_{p+q}(X,M);
\]
here $A_q(X,M)$ is a cycle module on $B$ obtained by taking homology of the fibers.

\medskip
Our approach in motivic homotopy theory essentially mixes all the ideas above and combines the classical Leray spectral sequence of a fibration with Rost's spectral sequence.  The jumping off point is the work of the second author exploring the close relationship between the notion of cycle module in the sense of Rost and the heart of a $t$-structure on the motivic stable homotopy category.  We use a relative version of Morel's homotopy $t$-structure, the so-called \emph{perverse homotopy t-structure}, defined initially by Ayoub \cite{Ayoub1} and developed further in \cite{BD1}.  The approach to constructing this $t$-structure in \cite{BD1} is rather flexible and works in great generality: the basic definition requires only the existence of a dimension function $\delta$ on the base scheme; for this reason, the $t$-structure was called the \emph{$\delta$-homotopy $t$-structure} in \emph{op. cit.}.

The heart of the $\delta$-homotopy $t$-structure consists of $\delta$-homotopy modules, or simply {\em homotopy modules}.  These objects are closely related to cycle modules and their homology (with respect to the truncation functors in the $\delta$-homotopy $t$-structure) can be computed by a Gersten-type complex that is formally extremely similar to that written down by Rost (see Definition~\ref{df:Gersten} for more details).  The approach we take via $t$-structures has technical advantages: functoriality properties and multiplicative structure are easy to obtain (see Proposition~\ref{prop:Gersten} and Proposition~\ref{prop:products_Gesten_hlg} for more details).  With these tools in hand, we state a version of our main result here (the notation and terminology in the statement that we have not yet mentioned may be found in the notations and conventions section below).




\begin{thmi}[see \textup{Theorem \ref{thm:Leray_ssp}}]
Fix a base scheme $S$ with a dimension function $\delta$, and suppose $f:  X \to B$ is a separated morphism of $S$-schemes having finite type.  
 Let $\E$ be a motivic spectrum (or a mixed motive) over $S$, and put: $\E^!_X:=f^!\E$.
 Then there exists a convergent spectral sequence of the form:
\[
E^2_{p,q}(f,\E)=A_p^\delta\big(B,H^\delta_{q}(f_*\E^!_X))\big)
\Rightarrow \E_{p+q}(X/S),
\]
abutting to the $\E$-homology of $X$ relative to $S$,
 or rather,
 the bivariant theory of $X/S$ with coefficients in $\E$
 (see below our Notations and conventions).  In fact, $E^2_{p,q}$
  \begin{itemize}
  \item[i)] vanishes outside a range of columns bounded by the maximum and minimum of the dimension function $\delta$ on $B$; and
  \item[ii)] is described as the homology of a Gersten type complex with coefficients in the homotopy module $H^\delta_{q}(f_*\E_X^!))$.
  \end{itemize}
\end{thmi}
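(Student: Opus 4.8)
The plan is to realise the spectral sequence as the \emph{niveau spectral sequence} of the single object $\M:=f_*\E^!_X\in\SH(B)$ (resp.\ $\DM(B)$) attached to the filtration of $B$ by $\delta$-dimension of support, and then to read off its $E_1$- and $E_2$-pages by means of the $\delta$-homotopy $t$-structure. First I would reduce the abutment to bivariant homology over $B$: writing $b\colon B\to S$ and $p_X=b\circ f$, the six-functor formalism — the $(f^*,f_*)$-adjunction together with $f^*\un_B=\un_X$ — identifies
\[
\E_n(X/S)=\Hom_{\SH(X)}\bigl(\un_X,\E^!_X[-n]\bigr)\;\cong\;\Hom_{\SH(B)}\bigl(\un_B,\M[-n]\bigr),
\]
up to the Tate twist prescribed by $\delta$, which I suppress throughout. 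So it is enough to build a convergent spectral sequence computing $\Hom_{\SH(B)}(\un_B,\M[-*])$ with the asserted $E_2$-page.

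Next I would set up the niveau exact couple. Filtering $B$ by $\delta$-dimension of support and feeding the localization triangles $i_*i^!\to\mathrm{id}\to Rj_*j^*$ into $\Hom_{\SH(B)}(\un_B,-[-*])$, together with the continuity of $\SH(-)$ along cofiltered limits of schemes (legitimate since $\un$ is compact and $f$, $\E$ have finite type), yields for every $\mathcal G\in\SH(B)$ a spectral sequence
\[
E^1_{p,q}(\mathcal G)=\bigoplus_{b\in B,\,\delta(b)=p}\Hom_{\SH(\res{b})}\bigl(\un,\,i_b^!\mathcal G[-(p+q)]\bigr)\;\Longrightarrow\;\Hom_{\SH(B)}\bigl(\un_B,\mathcal G[-(p+q)]\bigr),
\]
where $i_b\colon\spec(\res{b})\to B$ is the (pro-)immersion of the point $b$ and $d^1$ is assembled from the boundary maps of these triangles. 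Since the niveau filtration of $\M$ runs only over the finitely many $\delta$-degrees attained on $B$ (those between $\min_B\delta$ and $\max_B\delta$), it is finite, whence convergence. Applying the above with $\mathcal G=\M=f_*\E^!_X$ and using base change for $(-)_*$ and $(-)^!$ along the cartesian square cutting out the fibre $X_b:=f^{-1}(b)$, one gets $i_b^!\,f_*\E^!_X\cong f_{b\,*}\,\E^!_{X_b}$, hence
\[
E^1_{p,q}(f,\E)\;\cong\;\bigoplus_{b\in B,\,\delta(b)=p}\E_{p+q}\bigl(X_b/\res{b}\bigr);
\]
the $E_1$-page is the bivariant $\E$-theory of the fibres, in analogy with the topological Leray/Atiyah--Hirzebruch spectral sequence. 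In particular $E^1_{p,q}$, and a fortiori $E^2_{p,q}$, vanish outside the band $\min_B\delta\le p\le\max_B\delta$, which is assertion (i).

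It then remains to identify the $E_2$-page. Over the field $\res{b}$ the $\delta$-homotopy $t$-structure is a shift of Morel's homotopy $t$-structure, so $\E_{p+q}(X_b/\res{b})$ is the value at $\res{b}$ of the appropriate homotopy module of $f_{b\,*}\E^!_{X_b}$; moreover the formation of the homotopy modules $H^\delta_q$ commutes, up to the $\delta(b)$-shift, with the fibre functors $i_b^!$ — this is the compatibility of the $\delta$-homotopy $t$-structure with localization and restriction to points established in \cite{BD1} and underlying Proposition~\ref{prop:Gersten}. Consequently $(E^1_{\bullet,q}(f,\E),d^1)$ is canonically the Gersten complex of Definition~\ref{df:Gersten} attached to the homotopy module $H^\delta_q(f_*\E^!_X)$, its differential being the Rost-type residue (both it and the niveau $d^1$ are built from the boundary maps of the same codimension-one localization triangles). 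Hence
\[
E^2_{p,q}(f,\E)\;\cong\;A^\delta_p\bigl(B,\,H^\delta_q(f_*\E^!_X)\bigr),
\]
which is assertion (ii). (One may alternatively construct the spectral sequence by filtering $\M$ by its $\delta$-Postnikov tower over $B$, evaluating $\Hom_{\SH(B)}(\un_B,N[-*])=A^\delta_*(B,N)$ on each heart object $N=H^\delta_q\M$ via Proposition~\ref{prop:Gersten}, and applying Deligne's d\'ecalage to land on the same $E_2$-page.)

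The hard part will be the $E_2$-identification of the previous paragraph: one must check that, after the homotopy-module identifications, the boundary maps of the niveau exact couple of $\M$ agree on the nose with the residue differentials of the Gersten complexes of the heart objects $H^\delta_q\M$ — equivalently, that the truncation functors of the $\delta$-homotopy $t$-structure interact compatibly enough with the coniveau stratification of $B$. This rests on the $t$-exactness properties of $i_!$, $j^*$ and of the fibre functors for the $\delta$-homotopy $t$-structure proved in \cite{BD1}; the remaining work — tracking Tate twists, making the pro-object and continuity steps precise, and the convergence bookkeeping — is routine but must be carried out with care.
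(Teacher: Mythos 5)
Your proposal is correct in substance, but it takes a genuinely different route from the paper. The paper (Paragraph~\ref{num:Leray_ssp}, Theorem~\ref{thm:Leray_ssp}) filters the \emph{coefficient object}: it takes the Postnikov tower of $\M=f_*\E_X^!$ for the $\delta$-homotopy $t$-structure, obtains an exact couple whose $E_2$-term is $H_p(B,H_q^\delta\M)$ on the nose, and only then uses Proposition~\ref{prop:Gersten}(3) (degeneration of the $\delta$-niveau spectral sequence for heart objects) to rewrite this as $A_p^\delta(B,H^\delta_q\M)$, compute the terms as $\E_{p+q}(X_x/S)$ by the exchange $x^!f_*\simeq f_{x*}i_x^!$, and deduce the column bounds and hence convergence. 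You instead filter the \emph{base}: you run the $\delta$-niveau (flag) exact couple of Paragraph~\ref{num:delta-niveau} for $B$ with coefficients in $\M$, so that convergence and the column bounds are immediate and the $E_1$-page is already the bivariant theory of the fibres; note these terms should be written $\E_{p+q}(X_x/S)$ (via $i_{X_x}^!\pi^!\E=\pi_x^!\E$), since $\E$ lives over $S$ --- your ``$\E_{p+q}(X_x/\kappa(x))$'' only makes sense with coefficients $x^!f_*\E_X^!$, and then it is the same group. The step you rightly flag as the hard one --- that the row $(E^1_{\bullet,q},d^1)$ is the Gersten complex of $H_q^\delta\M$ --- does not require any residue computation: it is exactly Proposition~\ref{prop:Gersten}(1) applied to $\M[-q]$. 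Indeed, by the characterization of Paragraph~\ref{num:homological_delta_ref} applied to the point $x$ (a separated $S$-scheme essentially of finite type with $\delta_\pm=\delta(x)=p$), the group $\G_p(x/S)$ vanishes whenever $\G\geq_\delta 1$ or $\G\leq_\delta -1$; so by naturality of the niveau exact couple in the coefficient spectrum, the maps $\tau^\delta_{\geq q}\M\to\M$ and $\tau^\delta_{\geq q}\M\to H_q^\delta\M[q]$ induce isomorphisms of complexes on the $q$-th row, identifying it with $C_*^\delta(B,H_q^\delta\M)$. With that, your spectral sequence has the asserted $E_2$-page and abutment, which proves the statement as quoted. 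One caveat: what you build is the $\delta$-niveau spectral sequence of $B$ with coefficients $f_*\E_X^!$, which the paper does \emph{not} prove to agree with its homotopy Leray spectral sequence from $E_2$ onward (this comparison is explicitly deferred to future work, being known only over a perfect field for $f=\mathrm{id}$); so your construction yields the existence statement but does not automatically carry the further structure established for the paper's version, such as the multiplicativity of Paragraph~\ref{num:spectral_diagrams&products} or the filtration identification of Proposition~\ref{prop:comparison_filt}. Also, the d\'ecalage you mention at the end is unnecessary for the Postnikov route: the truncation filtration produces the $E_2$-page directly, which is precisely why the paper proceeds that way.
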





The spectral sequence above is obtained by filtering $f_*(\E_X^!)$ by truncations with respect to the $\delta$-homotopy $t$-structure.  It is closely related to the spectral sequence one would obtain by filtering $\E^*(-/S)$ via the pullback of the ($\delta$-)niveau filtration on $B$ along $f$ (for the dimension function $\delta$).  The following result gives a precise formulation.


\begin{propi}[See \textup{Proposition~\ref{prop:comparison_filt}}]
Let ${}^\delta F_p^f\E_n(X/S)$, $p\in \ZZ$, be the abuting filtration of the homotopy Leray spectral sequence.  Define the $\delta$-niveau filtration on $\E_*(X/S)$ relative to $f$ by the following formula:
\[
{}^\delta N_p^f\E_*(X/S)
=\bigcup_{i:Z \rightarrow X, \delta(Z)\leq p}
\mathrm{Im}\big(i_*:\E_*(X \times_B Z/S) \rightarrow \E_*(X/S)\big).
\]
For any pair of integer $(p,n) \in \ZZ^2$, the following relation holds:
\[
{}^\delta F_p^f\E_n(X/S)={}^\delta N_{n-p}^f\E_n(X/S).
\]
\end{propi}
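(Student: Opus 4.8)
The plan is to present both filtrations as the abutment filtrations of two spectral sequences built from the single object $M:=f_*\E^!_X$ over $B$, and to relate those spectral sequences by Deligne's \emph{d\'ecalage}, exactly as in the Bloch--Ogus comparison recalled in the introduction. The first step is to reinterpret the $\delta$-niveau filtration intrinsically in terms of $M$. For a closed immersion $i_Z\colon Z\hookrightarrow B$ with open complement $j\colon U\hookrightarrow B$, base change identifies $\E_*(X\times_B Z/S)$ with the bivariant $\E$-homology of $Z$ with coefficients in $i_Z^!M$, compatibly with the map induced by the localization triangle $i_{Z,*}i_Z^!M\to M\to j_*j^*M$. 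Running this over all closed $Z$ with $\delta(Z)\le p$ identifies ${}^\delta N_p^f\E_n(X/S)$ with the $p$-th step of the abutment filtration of the \emph{$\delta$-niveau spectral sequence} of $M$ over $B$, i.e. the spectral sequence of the filtration of $M$ by dimension of supports, of the shape
\[
E^1_{p,q}=\bigoplus_{x\in B,\,\delta(x)=p}\E_{p+q}(X_x/S)\Longrightarrow\E_{p+q}(X/S),
\]
where $X_x$ denotes the fibre of $f$ over the point $x$.

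The second, and main, step is to identify the homotopy Leray spectral sequence of Theorem~\ref{thm:Leray_ssp} --- built from the filtration of $M$ by its $\delta$-homotopy truncations $\tau^\delta_{\le\bullet}M$ --- with the d\'ecal\'ee of the $\delta$-niveau spectral sequence. The bridge is the structural fact that the $\delta$-homotopy $t$-structure on $\SH(B)$ (or $\DM(B)$) is precisely the one attached to the filtration of $B$ by dimension of supports: for a point $x$ with $\delta(x)=p$ the $\delta$-homotopy $t$-structure on the residue field is the ordinary homotopy $t$-structure shifted by $p$, the truncations $\tau^\delta_{\le\bullet}M$ are detected by the vanishing ranges of the stalks $i_x^*M$ and $i_x^!M$, and the Gersten--Rost complex computing $A^\delta_\bullet(B,-)$ on a homotopy module is the associated Cousin complex (Proposition~\ref{prop:Gersten}). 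Consequently the $\tau^\delta$-filtration on $M$ is, up to the standard reindexing, the d\'ecal\'ee of the support filtration on $M$; in particular the $E^2$-page of the d\'ecal\'ee spectral sequence is the Gersten homology $A^\delta_p(B,H^\delta_q(M))$, which matches the $E^2$-page in Theorem~\ref{thm:Leray_ssp}. The twist $\delta(x)$ entering at a point $x$ of varying dimension is what creates the total-degree shift.

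For the last step, I invoke the formal description of how d\'ecalage acts on the induced filtration of the abutment: passing to the d\'ecal\'ee reindexes the filtration in total degree $n$ by $p\mapsto n-p$. Together with the first step this gives the asserted identity ${}^\delta F_p^f\E_n(X/S)={}^\delta N_{n-p}^f\E_n(X/S)$. Convergence is not an issue: by part (i) of Theorem~\ref{thm:Leray_ssp} (and finiteness of $f$) the $E^2$-pages of both spectral sequences are concentrated in the finitely many columns between the minimum and maximum of $\delta$ on $B$, so both abutment filtrations are finite, exhaustive and separated, with no $\varprojlim^1$ obstruction. I expect the real work to lie in the second step: making the d\'ecalage comparison precise at the level of the filtered objects --- equivalently, of the exact couples --- which amounts to the statement that the $\delta$-homotopy $t$-structure is generated by the niveau filtration of $B$, together with careful bookkeeping of the $\delta$-shifts at points of different dimensions.
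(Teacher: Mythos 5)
Your step (1) is sound and matches the paper's own reduction: base change identifies $\E_*(X\times_B Z/S)$ with bivariant theory over $B$ with coefficients in $\E':=f_*\E_X^!$, so both filtrations can be read off from $\E'$ over $B$. The genuine gap is your step (2). The claim that the $\tau^\delta$-truncation filtration on $\E'$ is (up to reindexing) the d\'ecal\'ee of the filtration by dimension of support --- equivalently, that the homotopy Leray spectral sequence agrees from $E_2$ onward with the $\delta$-niveau spectral sequence of $\E'$ --- is not a formal consequence of the facts you cite (detection of truncations on stalks, the Gersten complex computing homology of homotopy modules). It is a substantial comparison theorem which the paper explicitly declines to prove: in the remark following Proposition~\ref{prop:comparison_filt} the authors note that such an identification is known only in the case $X=B$, $f=\mathrm{Id}$, $S$ the spectrum of a perfect field (Bondarko, D\'eglise), and that ``the general case will be treated in future work.'' Deligne's d\'ecalage also needs a workable filtered/triangulated framework here, and making the identification at the level of filtered objects (or exact couples), with the $\delta$-shifts at points of varying dimension, is precisely ``the real work'' you flag but do not carry out. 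As written, your argument assumes the hard theorem rather than proving the proposition.

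The paper's proof bypasses the spectral-sequence comparison entirely. After the reduction to $X=B=S$ with coefficients $\E'$, it records two vanishing bounds, which follow directly from the characterization of $t_\delta$-positivity/negativity in Paragraph~\ref{num:homological_delta_ref}: $H_n\big(B_{\leq p},\tau^\delta_{<q}\E'\big)=0$ for $n\geq p+q$, and $H_n\big(B_{>p},\tau^\delta_{\geq q}\E'\big)=0$ for $n\leq p+q$. Feeding these into the commutative diagram built from the localization sequence for $B_{\leq p}\subset B$ and the truncation triangle for $\tau^\delta_{\geq q}$, one sees that for $n=p+q$ the images of $H_n(B,\tau^\delta_{\geq q}\E')$ and of $H_n(B_{\leq p},\E')$ in $H_n(B,\E')$ coincide, which is exactly ${}^\delta F_q^f\E_n={}^\delta N_{n-q}^f\E_n$. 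If you want to salvage your approach, you would have to either prove the d\'ecalage identification in this generality (a separate project) or replace step (2) by a direct image comparison of this kind.
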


This proposition may be viewed as an analogue of Washnitzer's conjecture (that the coniveau filtration on de Rham cohomology coincides with the filtration arising from the second hypercohomology spectral sequence), which was established by Bloch and Ogus \cite{BO}.


There is also a cohomological form of the homotopy Leray spectral sequence, which is better suited to analysis of product structures.  We


\begin{thmi}[See \textup{Theorem \ref{thm:deltahomotopylerayss}}]
Fix a base scheme $S$ with a dimension function $\delta$, and suppose $f:  X \to B$ is a separated morphism of $S$-schemes having finite type.  If $\E$ is a motivic ring spectrum over $X$, then there is a convergent spectral sequence of the form
\[
E_2^{p,q}(f,\E)=A^p_\delta(B,H^q_\delta(f_*\E_X)) \Rightarrow H^{p+q}(X,\E),
\]
where we write $H^*(X,\E)$ for the groups usually denoted $\E^*(X)$ to emphasize the link with Leray spectral sequence.  As before the $E_2$-page is a) concentrated in a range of columns bounded by the maximum and minimum of the dimension function $\delta$ on $B$, and b) described in terms of (co)homology of the Gersten-style complex associated with homotopy module $H^q_\delta(f_*\E_X)$.  Moreover,
\begin{itemize}
\item[i)] each page $E_r$ has the structure of a differential graded algebra, and
\item[ii)] the induced product structure on the abutment coincides with the $\E$-cohomology product structure.
\end{itemize}
\end{thmi}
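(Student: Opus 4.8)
The plan is to realize the spectral sequence as a Grothendieck-type hypercohomology spectral sequence attached to the single object $M:=f_*\E_X\in\SH(B)$, and then to promote it to a multiplicative one. Since $f^\ast$ is monoidal we have $f^\ast\un_B\simeq\un_X$, so the $(f^\ast,f_*)$-adjunction gives
\[
H^n(X,\E)=\Hom_{\SH(X)}(\un_X,\E_X[n])\cong\Hom_{\SH(B)}(\un_B,M[n]),
\]
and the whole computation is governed by $M$ together with the unit of $\SH(B)$. I would then filter $M$ by its Postnikov tower for the $\delta$-homotopy $t$-structure --- equivalently, up to the usual décalage, filter $X$ by the preimages under $f$ of the $\delta$-niveau filtration of $B$, which is the filtration reappearing in Proposition~\ref{prop:comparison_filt} --- and apply $\Hom_{\SH(B)}(\un_B,-[\ast])$ to the resulting exact couple. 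Because $\delta$ takes finitely many values on $B$, the $\delta$-niveau filtration of $B$ is finite, so the spectral sequence is concentrated in finitely many columns; this both proves assertion (a) and forces the abutment filtration to be finite in each total degree, giving (strong) convergence.

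For the $E_2$-page I would combine two inputs. The first is the Gersten--Cousin resolution of homotopy modules recalled in Proposition~\ref{prop:Gersten}: for $N\in\hrt{\SH(B)}$, the associated niveau spectral sequence has $E_1$-terms the terms of the Gersten complex of Definition~\ref{df:Gersten}, whose cohomology is $A^\bullet_\delta(B,N)$, concentrated in the column range bounded by the extreme values of $\delta$ on $B$. The second is a décalage argument in the spirit of the Bloch--Ogus comparison (cf.\ \cite{Paranjape}): applying Deligne's technique to the combination of the Postnikov filtration of $M$ and the $\delta$-niveau filtration on $B$ collapses the two nested spectral sequences into one whose $E_2$-term is $A^p_\delta(B,H^q_\delta(f_*\E_X))$. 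This yields (b) directly, since the $E_2$-term is by construction the cohomology of the Gersten-style complex attached to the homotopy module $H^q_\delta(f_*\E_X)$. (One could instead deduce the statement from Theorem~\ref{thm:Leray_ssp} by a duality argument, but the direct construction makes the multiplicative structure transparent.)

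For the multiplicative refinement, the key point is that $f_*$ is lax symmetric monoidal, so the ring structure on $\E$ makes $M=f_*\E_X$ a monoid object of $\SH(B)$. Using that the $\delta$-homotopy $t$-structure is compatible with the tensor structure --- its connective part being stable under $\otimes$, and its heart carrying the monoidal product $\ohrt$ (cf.\ \cite{BD1}) --- the multiplication $M\otimes M\to M$ can be made compatible with the Postnikov filtration, so that $\bigoplus_q H^q_\delta(M)$ becomes a ring for $\ohrt$ and the exact couple becomes multiplicative. Each page $E_r$ then acquires a differential graded algebra structure with the differentials acting as derivations, which is (i); on $E_2$ the product is the one combining $\ohrt$ on homotopy modules with the external product on Gersten cohomology supplied by Proposition~\ref{prop:products_Gesten_hlg}. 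Finally, the abutment filtration is multiplicative by construction, so the product induced on $E_\infty=\mathrm{gr}\,H^\ast(X,\E)$ is the associated graded of the $\E$-cohomology cup product; identifying the two gives (ii).

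The step I expect to be the genuine obstacle is the multiplicativity, and specifically constructing a filtration of $M$ that is simultaneously a Postnikov tower for the $\delta$-homotopy $t$-structure and strictly compatible with $M\otimes M\to M$: since $t$-structure truncations are not monoidal on the nose, one must know that the connective subcategory of the $\delta$-homotopy $t$-structure on $\SH(B)$ is closed under $\otimes$ --- whence the canonical (suitably décalée) filtration is multiplicative --- and that the pairing it induces on the graded pieces is precisely the product $\ohrt$ entering Proposition~\ref{prop:products_Gesten_hlg}. By comparison, the remaining ingredients --- the adjunction rewriting of $H^\ast(X,\E)$, the finiteness of the $\delta$-niveau filtration and the ensuing convergence, and the identification of the relevant $\Hom$-groups over $B$ with Gersten cohomology --- should be routine given the material assembled in the preceding sections.
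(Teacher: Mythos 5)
The non-multiplicative part of your argument is essentially the paper's own: the spectral sequence is the homological one of Theorem~\ref{thm:Leray_ssp} specialized to $X=S$ (so that bivariant theory of $X/X$ becomes $\E$-cohomology), i.e.\ it comes from the Postnikov tower of $M=f_*\E$ for the $\delta$-homotopy $t$-structure; convergence and the bound on the columns follow from the boundedness of $\delta$ on $B$; and the identification $E_2^{p,q}\simeq A^p_\delta(B,H^q_\delta M)$ requires no d\'ecalage at all, since $H^q_\delta M$ lies in the heart and Proposition~\ref{prop:Gersten}(3) already identifies $H^p(B,N)$ with the Gersten cohomology $A^p_\delta(B,N)$ for any homotopy module $N$. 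Your appeal to a Bloch--Ogus style d\'ecalage comparing the Postnikov filtration with the $\delta$-niveau filtration is both unnecessary and stronger than what is available: the paper explicitly defers the comparison of the homotopy Leray spectral sequence with the $\delta$-niveau spectral sequence of $f_*\E$ to future work (only the filtrations on the abutment are compared, in Proposition~\ref{prop:comparison_filt}); fortunately your argument does not actually depend on it.

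The genuine gap is exactly the point you flag at the end: assertions (i) and (ii), the multiplicative structure, are the real content of Theorem~\ref{thm:deltahomotopylerayss} beyond Theorem~\ref{thm:Leray_ssp}, and your proposal does not construct it. Saying that the multiplication ``can be made compatible with the Postnikov filtration'' so that ``the exact couple becomes multiplicative'' is precisely the statement to be proved, and exact couples do not carry products in any straightforward way. The paper's mechanism is to replace exact couples by Cartan--Eilenberg spectral diagrams (Paragraph~\ref{num:spectral_diagrams}) and to invoke Douady's multiplicativity theorem \cite[Th. II]{Douady}; the key technical input is Proposition~\ref{prop:products1}, which, under the hypothesis $\delta\geq 0$ (needed so that $\otimes$ is right $t_\delta$-exact --- a hypothesis you omit), produces canonical pairings
\[
\tau^\delta_{[p,p+r[}(\F)\otimes\tau^\delta_{[q,q+r[}(\F')\longrightarrow \tau^\delta_{[p+q,p+q+r[}(\F'')
\]
on \emph{interval} truncations for every window length $r$, by connectivity and uniqueness arguments refining Proposition~\ref{prop:homotopy_pairing}; the uniqueness is what yields Douady's coherence axioms (SPP1) and (SPP2), and only then does one get the differential graded algebra structure on each page, the Leibniz rule, the identification of the $E_2$-product with that of Example~\ref{ex:products_Gersten&d-homology}, and the compatibility with the cup product on the abutment. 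Without an argument of this kind --- pairings on all interval truncations, not just the single-step truncations of the Postnikov tower, together with the coherence needed to feed a multiplicative spectral-sequence machine --- items (i) and (ii) remain unproved in your write-up.
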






\medskip
While the reader may consult the beginning of each section for further discussion of its contents, we close the introduction with a brief overview of the paper.  Section~\ref{sec:homtopytstructure} is devoted to reviewing the basic properties of this homotopy $t$--structure.  Section~\ref{sec:fiberhomology} then studies homotopy modules, Gersten complexes and the associated niveau spectral sequences.  In particular, we describe multiplicative structure on the niveau spectral sequences when considering cohomology theories equipped with suitable multiplicative structure.  Section~\ref{sec:Leray_ssq} is the theoretical heart of the paper.  We build the homotopy Leray spectral sequence (see, e.g., Theorem~\ref{thm:Leray_ssp}) and study an analog of ``locally constant sheaves" in our context.  Finally, Section~\ref{sec:applications} contains some sample applications of these spectral sequences.  In particular, we analyze fibrations with $\AA^1$-contractible fibers, construct Gysin and Wang sequences and, in the motivic case, prove a degeneration result for the homotopy Leray spectral sequence for relative cellular spaces.


\subsubsection*{Acknowledgements}
The authors want to thank Adrien Dubouloz for fruitful discussions about fibrations, and, especially, Fabien Morel for providing the initial impetus to pursue this work.  Aravind Asok was partially supported by National Science Foundation Awards DMS-1254892 and DMS-1802060.  F. D\'eglise and J. Nagel received support from the French "Investissements d’Avenir" program, project ISITE-BFC (contract ANR-lS-IDEX-OOOB).  

\section*{Notations and conventions}
\label{sec:notations}


\noindent \textbf{Geometry}.
All schemes in this paper will be noetherian, finite dimensional and assumed to come equipped with a dimension function, usually denoted $\delta$.  While we fix such a dimension function throughout, and while it may even appear explicitly in various notions we use, we emphasize that the choice is inessential in the sense that the most of the relevant notions do not depend on the fixed choice of dimension function up to a suitable notion of canonical equivalence; see Remark \ref{rem:delta_independence} for a more precise statement.\footnote{There is an exception to this rule. this is when one considers the underlying monoidal structure and its interaction with the $\delta$-homotopy $t$-structure. It will appear once in our applications, when we consider products on the homotopy Leray spectral sequence. We refer the reader to Sections \ref{sec:products} and \ref{sec:coh_version}.}
Anyway the dimension function is used to fix conventions regarding degrees of (co)homology as in \ref{num:homological_delta_ref}.

Fix a base scheme $S$.  An $S$-scheme $X$ will be said to have essentially finite type or, equivalently, $X \to S$ has essentially finite type, if $X$ can be written as a (co)filtered limit a $S$-schemes of finite type with affine \'etale transition morphisms.  By a point of a scheme $X$, we will mean a map $x:\spec(K) \rightarrow X$ where i) $K$ is a field and ii) if $k$ is the residue field of the image of $x$ in $X$, then the extension $K/k$ is finitely generated; equivalently, the morphism $x:\spec(K) \rightarrow X$ has essentially finite type.
We will simply write such a point by $x \in X(K)$.

The dimension function $\delta$ on $S$, is extended to a dimension function on $X$ as follows:
\begin{align*}
\forall x/s \in X/S,
 \delta(x)&:=\delta(s)+\trd(\kappa(x)/\kappa(s)). \\
\delta(X)=\delta_+(X)&:=\max_{x \in X}(\delta(x)). \\
\delta_-(X)&:=\min_{x \in X}(\delta(x)).
\end{align*}
Two examples to keep in mind include:
\begin{enumerate}
\item[(S1)] if $S$ is the spectrum of a field $k$, $\delta=0$, then $\delta(x)=\trd(\kappa(x)/k)$;
\item[(S2)] if $S$ is an excellent regular scheme of dimension less or equal than $3$ and $\delta$ is the Krull dimension.
\end{enumerate}
In both cases, if $X/S$ has finite type, then the integer $\delta(X)$ coincides with the Krull dimension of $X$.

Finally, the following formulas will be used in the paper:
\begin{enumerate}
\item[(D1)] if $S$ is regular connected, $\delta=d-\codim_S$ where $d=\delta(S)$;
\item[(D2)] if the morphism $f:X \rightarrow S$ has essentially finite type, and is lci with cotangent complex $L_f$, for any point $x \in X$, $\delta(x)=\mathrm{rk}\big(L_{f,x}\big)+\delta(s)$.  Moreover, if $X$ and $S$ are irreducible, and $d=\dim(f)$ is the rank of $L_f$, one has:
    \[
    \delta(X)=d+\delta(S).
    \]
\end{enumerate}

\noindent \textbf{Motivic stable homotopy}.
We fix a motivic triangulated category $\T$ in the sense of \cite[Def. 2.4.45]{CD3}, equipped with a motivic adjunction:
$$
\SH \leftrightarrows \T.
$$
In brief, $\T$ consists of the following data: for any scheme $S$, a triangulated closed symmetric monoidal category $\T(S)$; for any morphism of schemes $f$ and any separated morphism of finite type $p$, pairs of adjoint functors $(f^*,f_*)$, $(p_!,p^!)$ satisfying the so-called Grothendieck six functor formalism (see \cite[Th. 2.4.50]{CD3} for a precise statement). Following the terminology from stable homotopy theory objects of $\T(S)$ will be called \emph{$\T$-spectra over $S$}.

We write $\un_S$ for the monoidal unit in $\T(S)$, and $\un_S(1)$ for the Tate twist.  In the sequel, various combinations of Tate twists and shifts naturally arise, and we introduce a separate notation for these twists:
\[
\un_S\gtw 1:=\un_S(1)[1], \ \un_S\dtw 1 :=\un_S(1)[2].
\]
Given an object $\E$ in $\T(S)$, and a morphism $f: X \to S$ (resp. a separated morphism having finite type) we may define $\E$-cohomology and bivariant $\E$-theory by means of the formulas:
\begin{itemize}
\item (\textit{Cohomology}) $\E^{n,i}(X)=\Hom_{\T(X)}(\un_X,f^*\E(i)[n])$.
\item (\textit{Bivariant theory}
 $\E_{n,i}(X/S)=\Hom_{\T(X)}\big(\un_X(i)[n],f^!\E\big)$.
\end{itemize}
This notation is a standard in motivic homotopy theory. When dealing with spectral sequences,
 and homotopy modules, it will both be useful and meaningful to use the following notations:
\begin{itemize}
\item $H^{n}(X,\E)=\E^{n,0}(X)$.
\item $H_{n}(X/S,\E)=\E_{n,0}(X/S)$.
\end{itemize}
We will use the later notations exclusively in Sections \ref{sec:Leray_ssq} and \ref{sec:applications}.

The definition of the bivariant theory attached to $\E$ will be extended to the situation where $X$ has essentially finite type over $S$ (see Paragraph \ref{num:extension}).  The indexing for cohomology used above may also be expressed in terms of the other conventions for twists mentioned above, with notation changed accordingly: one has equalities of the form
\[
\E^{n,i}(X)=\E^{n-i,\gtw i}(X)=\E^{n-2i,\dtw i}(X);
\]
similar notation will be used for the associated bivariant theory.  When $\E=\un_S$, the various cohomology groups will be referred to as $\T$-cohomology and bivariant $\T$-theory respectively, and we write $H^{n,i}(X,\T)$ (resp. $H_{n,i}^{BM}(X/S,\T)$) for these groups.

Write $\uK(S)$ for the category of virtual vector bundles over $S$,
 associated with the category of vector bundles over $S$ with morphisms the isomorphisms
 of vector bundles.
 The Thom space construction may be viewed as a functor
\[
\Th_S:\uK(S) \rightarrow \T(S)
\]
that sends sums to tensor products. Following \cite{DJK}, we may twist cohomology and bivariant theories by pairs $(n,v) \in \ZZ\times \uK(X)$; we use the following notation for these twists:
\begin{align*}
\E^{n}(X,v)&=\Hom_{\T(X)}(\un_X,f^*\E \otimes Th_X(v)[n]) \\
\E_{n}(X/S,v)&=\Hom_{\T(X)}\big(\Th_X(v)[n],f^!\E\big).
\end{align*}
Given an integer $i \in \ZZ$, we denote by $\dtw i$ the unique free virtual vector bundle of rank $i$ -- the underlying scheme is implicit -- so that this notation is compatible with our conventions on twists.  If $K$ is a perfect complex over $X$, we denote by $\dtw K$ the associated virtual bundle (see \cite[2.1.5]{DJK}).

In order to be able to apply the construction of the $\delta$-homotopy t-structure of \cite{BD1}, we will require that $\T$ satisfies the following assumptions:
\begin{enumerate}
 \item[(T1)] $\T$ is generated by Tate twists of smooth schemes: more precisely, for any scheme $S$, $\T(S)$ is generated as a triangulated category by objects of the form $M_S(X)(i)$ with $X/S$ smooth and $i \in \ZZ$.\footnote{All objects in $\T(S)$ are obtained by taking extensions of arbitrary coproducts. Equivalently, an object $K$ of $\T(S)$ is zero if and only if:
\[
\forall X/S \text{ smooth,} (n,i) \in \ZZ^2,
 \Hom_{\T(S)}\big(M_S(X)(i)[n],K\big)=0.
\]}
\item[(T2)] $\T$ is continuous with respect to Tate twists:
 see \cite[4.3.2]{CD3}.\footnote{Recall that this expresses the compatibility of $\T$
 with projective limits.}
\item[(T3)] $\T$ is {\it homotopically compatible}: see \cite[3.2.12]{BD1}.\footnote{This
 condition is automatically verified (see \cite[3.2.13]{BD1})
 if $\T$ satisfies absolute purity and the following vanishing statement holds:
\[
\forall \text{fields } E, \forall n>m, H^{n,m}(\spec(E),\T)=0.
\]}
\item[(T4)] A suitable form of resolution of singularities holds: property
 (Resol) of \cite[2.4.1]{BD1}.\footnote{Under two geometric assumptions,
 this means that:
\begin{itemize}
\item In case (S1), we will require that, for $p$ the characteristic exponent of $k$, $\T$ is $\ZZ[1/p]$linear.
\item In case (S2), we require that $\T$ is $\QQ$-linear.
\end{itemize}}
\end{enumerate}

\noindent \textbf{Examples}.  The abstract setting being given,
 we now give our list of concrete frameworks, that will be used
 in all of our examples. We first fix some absolute base $S_0$
 and restrict our schemes to $S_0$-schemes essentially of finite type.
\begin{enumerate}
\item \textbf{Motivic case}. Let $R$ be a ring of coefficients,
 and assume one of the following situation:
\begin{itemize}
\item $S_0$ is the spectrum of a field $k$ whose characteristic exponent is invertible in $R$ and $\T = \DM(-,R)$ is Voevodsky's cdh-local category of triangulated mixed motives, introduced in \cite{CD5}.
\item $S_0$ is a Dedekind ring (or more generally an excellent scheme of dimension greater or equal than $3$),
 $R$ is a $\QQ$-algebra and $\T = \DM(-,R)$ is one the many models of the triangulated category of $R$-motives introduced in \cite{CD3}.
\end{itemize}
\item \textbf{Homotopical case}. The absolute base $S_0$ is the spectrum of a field with characteristic exponent $p$,
 and $\T=\SH[p^{-1}]$ is the $\ZZ[p^{-1}]$-linearization of the Morel-Voevodsky stable homotopy category of ${\mathbb P}^1$-spectra.  Particular motivic ring spectra of interest to us will include:
\begin{itemize}
\item  $S^0_S=\un_S$ the sphere spectrum.
\item $\HH R_S$ the motivic Eilenberg-MacLane ring spectrum with coefficients in $R$.\footnote{Recall this ring spectrum is obtained as the image of the constant motive under the canonical map: $K \circ \gamma_*:\DM(S,\ZZ[p^{-1}]) \rightarrow SH(S)[p^{-1}]$, by forgetting the transfer and then taking the Nisnevich Eilenberg-MacLane
 functor. See for example \cite{CD3}.}
\item $\HH \tilde R_S$ the Milnor--Witt motivic ring spectrum with coefficients in $R$.
 See \cite{DF2}.
\end{itemize}
\end{enumerate}
To fix ideas, the reader may take the dimension function $\delta_0$ on $S_0$ satisfying the conditions (S1) or (S2) fixed above, and for any $S_0$-scheme essentially finite type, take the induced dimension function (as described above).

\section{Homotopy t-structure and duality}
\label{sec:homtopytstructure}
  The goal of this section is to review a variant of the homotopy $t$-structure analyzed in \cite{BD1} that makes sense over rather general base schemes.  In more detail, Section~\ref{subsec:recollections} reviews the necessary theory from \cite{BD1} and fixes additional notations and conventions to be used in the sequel.  We highlight here Theorem~\ref{thm:deltahomotopytstructure} which summarizes the required existence result and Point~\ref{num:homologicalconventions} which describes {\em homological} conventions for $t$-structures that will be in force throughout the paper.  On the other hand, Section~\ref{subsec:purityandduality} is somewhat orthogonal to the above considerations and is concerned with recalling some facts about purity and duality that will be used in the remainder of the paper.

\subsection{Recollections on the homotopy {$t$}-structure}
\label{subsec:recollections}

\begin{num}\label{num:extension}
For any $\T$-spectrum $\E$ over a scheme $S$, bivariant $\E$-theory extends canonically from the category of separated $S$-schemes of finite type to that of separated $S$-schemes essentially of finite type.  Indeed, suppose $X$ is a separated $S$-scheme essentially of finite type. By assumption, there exists a pro-scheme $(X_\lambda)_\lambda$ with affine and \'etale transition morphisms, such that each $X_\lambda$ is separated and has finite type over $S$, and with limit $X$. We set:
\[
\E_{n,i}(X/S) := \ilim_{\lambda} \E_{n,i}(X_\lambda/S).
\]
This definition is independent of the pro-scheme presenting $X$ as a limit \cite[\S 8.2]{EGA4}.  The resulting definition presents a canonical extension of the original functor by the continuity assumption on $\T$ (i.e., property (T2) from our conventions).  Moreover, when $X$ has finite type over $S$ the new definition agrees with the old definition by using again the continuity assumption on $\T$.
\end{num}

\begin{prop}
\label{prop:homological_delta}
If $\E$ is a $\T$-spectrum over $S$, then the following conditions are equivalent.
\begin{enumerate}
\item[(i)] For any separated scheme $X/S$ of finite type, one has:
\begin{align*}
\E_{n,i}(X/S)=0
 \text{ when } & n-i<\delta_-(X), \\
\text{respectively } & n-i>\delta_+(X).
\end{align*}
\item[(ii)] For any point $x \in S(K)$ (see our Notations and conventions), one has:
\begin{align*}
\E_{n,i}(x)=0
 \text{ when } & n-i<\delta(x), \\
\text{respectively } & n-i>\delta(x).
\end{align*}
\end{enumerate}
\end{prop}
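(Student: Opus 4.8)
The plan is to prove the two implications separately, and within each to treat the statement ``$\E_{n,i}=0$ for $n-i<\delta_-$'' and the statement ``$\E_{n,i}=0$ for $n-i>\delta_+$'' by formally parallel arguments; these two halves correspond, in the language of \cite{BD1}, to $\E$ lying in the non-negative, resp. non-positive, part of the $\delta$-homotopy $t$-structure.

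\emph{(ii) $\Rightarrow$ (i) (``propagating up'').} I would argue by induction on $\dim X$, using only the localization triangles from the six functor formalism and the continuity property (T2). Since nilpotent immersions are inverted in $\T$, one reduces to $X$ reduced; choose then a nonempty open $U\subseteq X$ that is a disjoint union of integral schemes $U_j$ with generic points $\eta_j$, and set $Z:=X\setminus U$ with its reduced structure, so that $\dim Z<\dim X$, $\delta_-(Z)\geq\delta_-(X)$ and $\delta_+(Z)<\delta_+(X)$. If $n-i\notin[\delta_-(X),\delta_+(X)]$ then $n-i\notin[\delta_-(Z),\delta_+(Z)]$, so $\E_{n,i}(Z/S)=0$ by induction, and by the localization triangle $\E_{n,i}(Z/S)\to\E_{n,i}(X/S)\to\E_{n,i}(U/S)$ it suffices to show each $\E_{n,i}(U_j/S)=0$. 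Fix $j$. By (T2), $\E_{n,i}(\kappa(\eta_j)/S)=\varinjlim_{V}\E_{n,i}(V/S)$, the colimit over nonempty opens $V\subseteq U_j$, and since $n-i\neq\delta(\eta_j)=\delta_+(U_j)$, hypothesis (ii) at $\eta_j$ makes this colimit vanish; hence any class in $\E_{n,i}(U_j/S)$ dies in some $V$, so (again by localization) comes from $\E_{n,i}(Z_1/S)$ with $Z_1:=U_j\setminus V$ proper closed, for which $\dim Z_1<\dim U_j$ and $n-i\notin[\delta_-(Z_1),\delta_+(Z_1)]$, so $\E_{n,i}(Z_1/S)=0$ by induction. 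The base case $\dim X=0$ is immediate: $X_{\mathrm{red}}$ is a finite disjoint union of spectra of residue fields which are finite type over $S$, and (ii) applies to each.

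\emph{(i) $\Rightarrow$ (ii), upper bound (``spreading down'').} Given $x\in S(K)$ with image $s$, spreading out a finite set of generators of $K$ over $\kappa(s)$ realizes $K$ as the function field of an integral $S$-scheme $X_0$ of finite type whose generic point $\eta_0$ maps to $s$; since a dimension function strictly decreases along specializations, $\delta$ is maximal on $X_0$ at $\eta_0$, so $\delta_+(X_0)=\delta(\eta_0)=\delta(s)+\trd(K/\kappa(s))=\delta(x)$. By (T2), $\E_{n,i}(x)=\varinjlim_{V}\E_{n,i}(V/S)$ over nonempty opens $V\subseteq X_0$, and $\delta_+(V)=\delta(x)$ for each; thus (i) gives $\E_{n,i}(V/S)=0$ for $n-i>\delta(x)$, and the colimit vanishes in that range.

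\emph{(i) $\Rightarrow$ (ii), lower bound: the main obstacle.} What remains is that $\E_{n,i}(x)=0$ for $n-i<\delta(x)$, and the spreading-out argument does \emph{not} suffice: when $\trd(K/\kappa(s))>0$ every finite-type $S$-scheme approximating $\spec K$ has $\delta_-$ near $\delta_-(S)$ rather than $\delta(x)$, since the bivariant theory of a generic point is exactly the ``niveau-zero'' piece, which the niveau filtration of an ambient finite-type scheme never constrains. This is the relative, bivariant incarnation of Morel's connectivity theorem and is genuinely non-formal. I would first reduce: replacing $S$ by the reduced closed subscheme $\overline{\{s\}}$ and $\E$ by $i^!\E$ for the closed immersion $i\colon\overline{\{s\}}\hookrightarrow S$ (which leaves the bivariant groups and the dimension function unchanged, and preserves (i) since finite-type $\overline{\{s\}}$-schemes are finite-type $S$-schemes) one may assume $S$ integral with $s$ its generic point; shrinking $S$ to its open dense regular locus and using the $\ZZ[1/p]$- (resp.\ $\QQ$-) linearity of (T4) to discard purely inseparable extensions, one may further assume $S$ regular and $K/\kappa(s)$ separably generated, so that $\spec K$ is a cofiltered limit of smooth $S$-schemes. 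The claim then becomes the exactness, up to the evident shift, of essentially-smooth pullback for the $\delta$-homotopy $t$-structure, together with the fact that over a field the heart of that $t$-structure — the $\delta$-homotopy modules — has sections concentrated in a single cohomological degree; for these I would invoke \cite{BD1} directly. Isolating exactly the input needed from \cite{BD1} and matching the numerical ranges through (absolute) purity (T3) is, to my mind, the only serious point in the proof.
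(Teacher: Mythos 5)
Your proposal is consistent with the paper where it matters, but its texture is different: the paper's entire proof is the citation ``combine Theorem 3.3.1 and Corollary 3.3.5 of \cite{BD1}'', with a remark that the statement can alternatively be extracted from the existence and convergence of the $\delta$-niveau spectral sequence, whereas you prove three of the four implications by hand and defer only the fourth to \cite{BD1}. Your hand-made parts are correct and are exactly an unrolled form of the niveau-spectral-sequence argument: given (ii), every $E^1$-term contributing to the relevant total degree vanishes and the niveau filtration is finite, so the abutment vanishes, which is your d\'evissage for (ii)$\Rightarrow$(i) (one small repair: in the inductive step $U$ must be chosen \emph{dense} in $X$, i.e. containing all maximal points, since otherwise neither $\delta_+(Z)<\delta_+(X)$ nor $\dim Z<\dim X$ is guaranteed); and your spreading-out argument for the upper bound of (i)$\Rightarrow$(ii), using $\delta_+(\text{model})=\delta(x)$, is the formal colimit argument via (T2). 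Your diagnosis that the lower bound of (i)$\Rightarrow$(ii) is the only non-formal point is, in my view, the careful reading: $\delta_-$ of any finite-type model stays near $\delta_-$ of the base rather than climbing to $\delta(x)$, so neither continuity nor the niveau spectral sequence closes the gap, and the needed connectivity gain at generic points is of Gersten--Morel type. Note that this puts you in mild tension with the paper's own Remark claiming the niveau-spectral-sequence proof uses none of (T1), (T3), (T4); your bookkeeping suggests that remark is at best unjustified for this one direction, while the paper's primary proof (the citation of \cite[Th.~3.3.1, Cor.~3.3.5]{BD1}, whose proofs do use those hypotheses) covers it.

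Two caveats on your final paragraph. The preliminary reductions (pass to $\overline{\{s\}}$ via $i^!\E$, shrink to a regular dense open --- which tacitly needs some excellence --- and discard inseparable extensions) are reasonable, but the concluding step is not yet an argument: invoking ``exactness of essentially smooth pullback for the $\delta$-homotopy $t$-structure'' presupposes that condition (i) identifies the non-negative part of that $t$-structure, which is precisely the content of \cite[Cor.~3.3.5]{BD1}, so as written the reduction risks circularity; and ``sections of homotopy modules over a field are concentrated in a single degree'' is not the statement actually needed. In the end, for the hard implication you are making the same appeal the paper makes --- which is fine --- but you should state it as such: the input is Theorem 3.3.1 together with Corollary 3.3.5 of \cite{BD1}, not a pair of auxiliary lemmas.
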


\begin{proof}
The equivalence of (i) and (ii) follows by combining Theorem 3.3.1 and Corollary 3.3.5 of \cite{BD1}.  Alternatively, it is a straightforward consequence of the existence and convergence of the $\delta$-niveau spectral sequence (\cite[Def. 3.1.5]{BD1} or Paragraph \ref{num:delta-niveau} in this paper).
\end{proof}

\begin{rem}
\begin{enumerate}
\item In \cite{BD1}, the extension $\E_{**}$ to separated schemes essentially of finite type was denoted by $\hat \E_{**}$. Since, according to paragraph \ref{num:extension}, this extension is unique and well-defined -- using in particular the assumption (T2) for coherence, we will not follow this notational convention here (and we caution the reader that the decoration $\hat{\E}$ is used with a different meaning in this paper. )
\item If we use the $\delta$-niveau spectral sequence, then the proof of the previous proposition does not use the assumptions (T1), (T3) and (T4).  Therefore, the preceding proposition is true without assuming these conditions.
\end{enumerate}
\end{rem}

We can now state the main theorem of \cite{BD1} (see \emph{loc. cit.} Th. 3.3.1 and Cor. 3.3.5).

\begin{thm}
\label{thm:deltahomotopytstructure}
Given any scheme $S$, there exists a $t$-structure on $\T(S)$ whose homologically non-negative (resp. non-positive) objects are the $\T$-spectra $\E$ over $S$ satisfying the equivalent conditions (i) and (ii) of the above proposition.  This $t$-structure is, moreover, non-degenerate and satisfies gluing in the sense of \cite[1.4.10]{BBD} (see also Remark \ref{rem:glueing}).
\end{thm}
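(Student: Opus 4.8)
The plan is to construct the truncation functors of the prospective $t$-structure, after which non-degeneracy and gluing follow by fairly formal arguments. Write $\T(S)_{\geq 0}$, resp.\ $\T(S)_{\leq 0}$, for the full subcategory of $\E\in\T(S)$ satisfying condition (i) --- equivalently (ii) --- of Proposition~\ref{prop:homological_delta}. Since $(\E[1])_{n,i}(X/S)=\E_{n-1,i}(X/S)$, these classes are stable under the appropriate shifts ($\T(S)_{\geq 0}[1]\subseteq\T(S)_{\geq 0}$ and $\T(S)_{\leq 0}[-1]\subseteq\T(S)_{\leq 0}$), so what remains in order to produce a $t$-structure is the orthogonality $\Hom(\T(S)_{\geq 1},\T(S)_{\leq 0})=0$ together with, for every $\E$, a functorial triangle $\tau_{\geq 1}\E\to\E\to\tau_{\leq 0}\E\to$ with outer terms in the two prescribed classes.

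For this I would present $\T(S)_{\geq 0}$ as the cocomplete pre-aisle generated by an explicit \emph{set} of compact objects --- the admissible Tate twists and shifts $\Mbm_S(X)(i)[n]$ of the Borel--Moore motives $\Mbm_S(X)=f_!\un_X$ of separated finite-type $S$-schemes $X$ --- and then invoke the standard existence theorem for $t$-structures generated by a set of compact objects. The ingredients are that $\T(S)$ is compactly generated (assumption (T1)), that $\Mbm_S(X)$ is constructible, hence compact, for $f$ of finite type (a consequence of the six-functor formalism under the running hypotheses (T1)--(T4)), and the general fact that a cocomplete pre-aisle generated by compacts is an aisle, whence the right adjoint $\tau_{\geq 0}$ and the truncation triangle exist. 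That the non-negative and non-positive parts of the $t$-structure so obtained coincide with the classes of Proposition~\ref{prop:homological_delta} then has to be proved: one inclusion is built in, while for the converse I would use the convergent $\delta$-niveau spectral sequence (Paragraph~\ref{num:delta-niveau}) to locate inside any $\E$ violating the stated vanishing a nonzero group $\E_{n,i}(X/S)$ in a forbidden bidegree. This identification --- proceeding by devissage along the $\delta$-filtration by points, reduction to residue fields where Morel's homotopy $t$-structure and absolute purity are available, and reassembly via the niveau tower --- is the heart of the matter and is precisely the content of \cite[Th.~3.3.1]{BD1}; it is where the assumptions (T1)--(T4) are genuinely used.

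Granting the $t$-structure, non-degeneracy follows from conservativity of the homology functors $H_m^\delta$: if every $H_m^\delta(\E)$ vanished, the convergent $\delta$-niveau spectral sequence would force $\E_{n,i}(X/S)=0$ for all finite-type $X/S$ and all $(n,i)$, hence $\E=0$ by compact generation; applying this to $\E\in\bigcap_n\T(S)_{\geq n}$ and to $\E\in\bigcap_n\T(S)_{\leq n}$ shows both intersections vanish. For gluing, fix a closed immersion $i\colon Z\hookrightarrow S$ with open complement $j\colon U\hookrightarrow S$; since $\T$ satisfies localization, $(\T(Z),\T(S),\T(U))$ is a recollement, so by \cite[1.4.10]{BBD} the $\delta|_Z$- and $\delta|_U$-homotopy $t$-structures glue to a $t$-structure on $\T(S)$, and to identify it with the $\delta$-homotopy $t$-structure it suffices to check that $j^*$ and $i_*$ are $t$-exact. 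This follows from base change, which gives $(j^*\E)_{n,i}(V/U)=\E_{n,i}(V/S)$ for $V/U$ of finite type and $(i_*\F)_{n,i}(X/S)=\F_{n,i}(X\times_S Z/Z)$ for $X/S$ of finite type, together with the elementary facts that $\delta$ on $U$ and on $Z$ is the restriction of $\delta$ and that $\delta_-(X\times_S Z)\geq\delta_-(X)$, $\delta_+(X\times_S Z)\leq\delta_+(X)$, which make the vanishing ranges of Proposition~\ref{prop:homological_delta} compatible; the glued $t$-structure then coincides with ours by the usual argument with the two localization triangles $i_*i^*(-)[-1]\to j_!j^*(-)\to(-)\to i_*i^*(-)$ and $i_*i^!(-)\to(-)\to j_*j^*(-)\to$. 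I expect the main obstacle to be the construction step of the second paragraph --- producing the truncation functors and matching the abstractly generated aisle with the pointwise vanishing condition --- as that is where the geometric input (constructibility of $f_!$, absolute purity, resolution of singularities) genuinely enters.
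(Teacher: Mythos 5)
The paper gives no proof of this theorem: it is quoted verbatim from \cite{BD1} (Th.~3.3.1 and Cor.~3.3.5), and your outline --- presenting the aisle as generated by (twists and shifts of) Borel--Moore objects $f_!\un_X$, identifying non-negativity/non-positivity with the pointwise vanishing conditions via the convergent $\delta$-niveau spectral sequence, deducing non-degeneracy from conservativity of fiber homology, and establishing gluing by base change together with $t$-exactness of $j^*$ and $i_*$ --- is precisely the route of that reference, with the crucial identification step deferred to exactly the same citation. So your proposal is correct and takes essentially the same approach as the paper's (cited) proof.
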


\begin{df}
\label{df:htp_t}
Given any scheme $S$, the $t$-structure on $\T(S)$ of the above theorem will be called the \emph{$\delta$-homotopy t-structure}.  Objects of the heart of this $t$-structure, denoted by $\hrt{\T(S)}$, will be called \emph{$\delta$-homotopy modules}, or simply \emph{homotopy modules}
 when this does not lead to confusion.
\end{df}

\begin{rem}
\label{rem:delta_independence}
The $t$-category $\T(S)$ is ``independent" of the choice of $\delta$ in a sense we now explain.  Given another choice $\delta'$, we know that the function $\delta'-\delta$ is constant on connected components of $S$.  As $\T(S \sqcup S')=\T(S) \oplus \T(S')$, we may obviously assume $S$ is connected so that $\delta'=\delta+n$.  It follows from the above definition that
\[
\phi_{\delta,\delta'}:(\T(S),t_\delta) \rightarrow (\T(S),t_{\delta'}),
 \E\mapsto \E[n]
\]
is an equivalence of $t$-categories. It may be useful to remember the formula:
\begin{equation}\label{eq:delta_independence}
\tau_{\geq p}^{\delta+n}=\tau_{\geq p+n}^\delta
\end{equation}
(see the conventions of Par. \ref{num:homologicalconventions} for
 these truncation functors).
\end{rem}

\begin{ex}
\begin{enumerate}
\item \underline{Motivic case}:  Let $k$ be a perfect field and assume $\delta$ is the obvious dimension function on $k$.  In the motivic case, the $\delta$-homotopy $t$-structure on $\DM(k,R)$ coincides with the stable version of Voevodsky's homotopy $t$-structure introduced in \cite[Sec. 5.2]{Deg9}. See \cite[Ex. 2.3.5]{BD1} for details.  In particular, a $\delta$-homotopy module $\E$ over $k$ is just a $\ZZ$-graded homotopy invariant sheaf with transfers equipped with an isomorphism:
\[
(\E_{n+1})_{-1} \simeq \E_n.
\]
We refer the reader to \cite[1.17]{Deg9} for more details.
\item \underline{Homotopical case}:
In the homotopical case, given a field $k$ with the obvious dimension function, the $\delta$-homotopy $t$-structure on $\SH(k)$ coincides with Morel's homotopy $t$-structure (see \cite[Sec. 5.2]{Mor1}).  We refer the reader again to \cite[Ex. 2.3.5]{BD1} for more details.  In this case, a $\delta$-homotopy module $\E$ over $k$ is a $\ZZ$-graded
strictly $\AA^1$-invariant Nisnevich sheaf over smooth $k$-schemes with a given isomorphism:
\[
(\E_{n+1})_{-1} \simeq \E_n.
\]
In other word, this is a homotopy module in the sense of Morel.
Recall also that a spectrum $\E$ is called {\em orientable} if it admits the structure of a module over the ring spectrum $\mathbf{MGL}$.  Orientability for homotopy modules turns out to be equivalent to requiring that the Hopf map $\eta$ (an element in the graded endomorphisms of the motivic sphere spectrum) acts trivially.  In fact, orientability is also equivalent to requiring that $\E$ admits transfers, in which case these transfers are unique (see \cite[4.1.5, 4.1.7]{Deg10} for further details).
\item Over a general base $S$, in both the homotopical and motivic cases, the $\delta$-homotopy $t$-structure can be compared with the
 \emph{perverse homotopy $t$-structure} defined by Ayoub in \cite[\textsection 2.2.4]{Ayoub1}.  This comparison requires an appropriate choice of $\delta$, and we refer the reader to \cite[2.3.11]{BD1} for details.
\end{enumerate}
\end{ex}

\begin{num}
\label{num:homologicalconventions}
\textit{Homological conventions}.--
For the most part, we adopt homological conventions, as they are better suited to issues that arise involving singularities.  We will write $\E \geq_\delta n$ (resp. $\E\leq_\delta n$) to say that $\E$ is concentrated in homological degree above $n-1$ (resp. below $n+1$)
and denote by $\tau^\delta_{\geq n}$ (resp. $\tau^\delta_{\geq n}$) the corresponding homological truncation functor.  Using homological conventions, the truncation triangles read:
\[
\tau^\delta_{\geq 0}(\E) \rightarrow \E \rightarrow \tau^\delta_{<0}(\E) \xrightarrow{+1}
\]
and $\Hom(\E,\F)=0$ if $\E\geq_\delta 0$ and $\F<_\delta 0$.

We denote by $H_n^\delta$ the $n$-th homology functor with respect to the $t_\delta$-homotopy $t$-structure.  Finally, we summarize conventions with respect to suspensions:
\[
H_n^\delta(\E[i])=H_{n-i}^\delta(\E), \
 \tau_{\geq n}^\delta(\E[i])=\tau_{\geq n-i}^\delta(\E)[i], \
 \tau_{\leq n}^\delta(\E[i])=\tau_{\leq n-i}^\delta(\E)[i].
\]
\end{num}

\begin{rem}
We may pass from homological to cohomological indexing by writing indices as superscripts and reversing signs.  In formulas:
\[
H^n_\delta(\E)=H_{-n}^\delta(\E),
 \ \tau_\delta^{\leq n}=\tau^\delta_{\geq -n},
 \ \tau_\delta^{>n}=\tau^\delta_{<-n}.
\]
\end{rem}

\begin{num}\label{num:homological_delta_ref}
For the record, we now give a number of equivalent characterizations of positivity or negativity with respect to the $\delta$-homotopy $t$-structure -- this is simply a consequence of Proposition~\ref{prop:homological_delta}, using the above conventions and the construction in Paragraph~\ref{num:extension}.

Given a $\T$-spectrum $\E$ over $S$ and an integer $m \in \ZZ$, the following conditions are equivalent:
\begin{enumerate}
\item[(i)] $\E \geq_\delta m$ (resp. $\E \leq_\delta m$)
\item[(ii)] For any separated scheme $X/S$ having finite type, one has:
\begin{align*}
\E_{n,i}(X/S)=0
 \text{ when } & n-i<m+\delta_-(X), \\
\text{respectively } & n-i>m+\delta_+(X).
\end{align*}
\item[(ii')] For any separated scheme $X/S$ being essentially of finite type, one has:
\begin{align*}
\E_{n,i}(X/S)=0
 \text{ when } & n-i<m+\delta_-(X), \\
\text{respectively } & n-i>m+\delta_+(X).
\end{align*}
\item[(iii)] For any point $x \in S(K)$ (see again our Notations and conventions),
 one has:
\begin{align*}
\E_{n,i}(x)=0
 \text{ when } & n-i<m+\delta(x), \\
\text{respectively } & n-i>m+\delta(x).
\end{align*}
\end{enumerate}
\end{num}


\begin{num}\label{num:basic_t_exact}
\textit{$t$-exactness of the six operations}.--
Given a functor $F$ between triangulated categories equipped with $t$-structures, one says that $F$ is left (resp. right) $t$-exact if it respects
 homologically negative (resp. positive) objects; such a functor $F$ is $t$-exact if it is both left and right $t$-exact.  One says that  $F$ has homological amplitude $[a,b]$ if for any object $\E$:
\begin{itemize}
\item $\E \geq 0 \Rightarrow F(\E)\geq a$.
\item $\E \leq 0 \Rightarrow F(\E)\leq b$.
\end{itemize}
Let $f:X \rightarrow S$ be a morphism essentially of finite type, and $d$ the maximum dimension of its fibers. We consider the $\delta^f$-homotopy $t$-structure on $\T(X)$, where $\delta^f$ is the dimension function on $X$ induced by that of $S$ with respect
 to the morphism $f$ (see Notations and conventions page \pageref{sec:notations}).
 Then one has the following results:
\begin{itemize}
\item $f^*[d]$ is right $t_\delta$-exact.
\item If $f$ is smooth, $f^*[d]$ is $t_\delta$-exact.
\item The functor $f_*$ has $t_\delta$-amplitude $[0,d]$.
\end{itemize}
The first, second and third points are respectively proved in \cite{BD1},
 2.1.6(3), 2.1.12, 3.3.7. \\
If in addition $f$ is separated of finite type, we get:
\begin{itemize}
\item $f_!$ is right $t_\delta$-exact.
\item $f^!$ is $t_\delta$-exact.
\item If $\delta \geq 0$ then $\otimes$ is right $t_\delta$-exact.
\end{itemize}
These points are respectively proved in \cite{BD1},
 2.1.6(1), 3.3.7(4), 2.1.6(2).
\end{num}

\begin{rem}
\label{rem:glueing}
As mentioned in the Theorem~\ref{thm:deltahomotopytstructure}, the $\delta$-homotopy $t$-structure satisfies gluing.  We recall here precisely what this means.  Consider a closed immersion $i:Z \rightarrow S$ with complementary open immersion $j:U \rightarrow S$ and a $\T$-spectrum $\E$ over $S$.  The following conditions are equivalent:
\begin{itemize}
\item $\E$ is homologically non-$t_\delta$-negative (resp. non-$t_\delta$-positive).
\item $j^*\E$ and $i^*\E$ are homologically non-$t_\delta$-negative (resp. $j^*\E$ and $i^!\E$ are homologically non-$t_\delta$-positive).
\end{itemize}
The equivalence of these conditions can be deduced from the localization property of $\T$ and the $t$-exactness stated above (see \cite[Cor. 2.1.9]{BD1}).
\end{rem}

Let us now explicitly state the following consequence of the result on the tensor product.
\begin{prop}\label{prop:homotopy_pairing}
Assume the dimension function $\delta$ on $S$ is non-negative.
Let $\E$ be a $\T$-spectrum over $S$.
 Then for any pair $(p,q) \in \ZZ^2$, one has a canonical pairing:
$$
\bar \phi:\tau_{\geq p}^\delta(\E) \otimes \tau^\delta_{\geq q}(\F)
 \rightarrow \tau^\delta_{\geq p+q}(\E \otimes \F)
$$
which is bi-functorial in $\E$.
\end{prop}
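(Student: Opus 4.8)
The plan is to obtain $\bar\phi$ as the unique lift, through the truncation morphism of $\E\otimes\F$, of the tensor product of the two truncation morphisms of $\E$ and $\F$; the only non-formal ingredient is the right $t_\delta$-exactness of $\otimes$ recorded in Paragraph~\ref{num:basic_t_exact}, which is available precisely because we have assumed $\delta\geq 0$. First I would tensor the truncation morphisms $\tau^\delta_{\geq p}(\E)\to\E$ and $\tau^\delta_{\geq q}(\F)\to\F$ to get a canonical
\[
\psi:\tau^\delta_{\geq p}(\E)\otimes\tau^\delta_{\geq q}(\F)\longrightarrow \E\otimes\F,
\]
which will be the ``raw'' pairing; everything else is about showing $\psi$ factors through $\tau^\delta_{\geq p+q}(\E\otimes\F)$.

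Next I would check that the source of $\psi$ is homologically $\geq_\delta p+q$. By the suspension conventions of \ref{num:homologicalconventions} one has $\tau^\delta_{\geq p}(\E)[-p]\geq_\delta 0$ and $\tau^\delta_{\geq q}(\F)[-q]\geq_\delta 0$; since $\otimes$ is right $t_\delta$-exact (this is the step that uses $\delta\geq0$), the object $\big(\tau^\delta_{\geq p}(\E)[-p]\big)\otimes\big(\tau^\delta_{\geq q}(\F)[-q]\big)=\big(\tau^\delta_{\geq p}(\E)\otimes\tau^\delta_{\geq q}(\F)\big)[-p-q]$ is $\geq_\delta 0$, i.e.\ $\tau^\delta_{\geq p}(\E)\otimes\tau^\delta_{\geq q}(\F)\geq_\delta p+q$.

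Then I would invoke the universal property of truncation. With the homological conventions of \ref{num:homologicalconventions}, the functor $\tau^\delta_{\geq n}$ is right adjoint to the inclusion of the full subcategory $\{\mathcal G\in\T(S):\mathcal G\geq_\delta n\}$: applying $\Hom_{\T(S)}(\mathcal G,-)$ to the truncation triangle $\tau^\delta_{\geq n}(\mathcal H)\to\mathcal H\to\tau^\delta_{<n}(\mathcal H)\xrightarrow{+1}$ and using the orthogonality $\Hom_{\T(S)}(\mathcal G,\mathcal K)=0$ for $\mathcal G\geq_\delta n$ and $\mathcal K<_\delta n$ yields $\Hom_{\T(S)}(\mathcal G,\tau^\delta_{\geq n}(\mathcal H))\xrightarrow{\sim}\Hom_{\T(S)}(\mathcal G,\mathcal H)$. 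Applied with $n=p+q$, $\mathcal H=\E\otimes\F$ and $\mathcal G=\tau^\delta_{\geq p}(\E)\otimes\tau^\delta_{\geq q}(\F)$ (which is $\geq_\delta p+q$ by the previous step), this produces a unique morphism $\bar\phi$ with $\big(\tau^\delta_{\geq p+q}(\E\otimes\F)\to\E\otimes\F\big)\circ\bar\phi=\psi$. Since the truncation morphisms, the tensor product, and the adjunction bijection are all natural, $\bar\phi$ is natural in each argument, hence bi-functorial in $\E$ as asserted (and in $\F$), and uniqueness of the factorization shows it is canonical.

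I do not expect a genuine obstacle: the construction is purely formal once the right $t_\delta$-exactness of $\otimes$ is in hand. The only point that warrants care is the interpretation of that exactness statement, which must be read in the two-variable sense ($\E\geq_\delta 0$ and $\F\geq_\delta 0$ imply $\E\otimes\F\geq_\delta 0$); this is exactly what makes the shift computation legitimate and what produces the sharp connectivity bound $p+q$ rather than something weaker.
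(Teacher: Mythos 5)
Your proposal is correct and follows essentially the same route as the paper: form the canonical map from the tensor product of the truncations to $\E\otimes\F$, use the right $t_\delta$-exactness of $\otimes$ (valid since $\delta\geq 0$) to see the source is in degrees $\geq_\delta p+q$, and then factor uniquely through $\tau^\delta_{\geq p+q}(\E\otimes\F)$ via the truncation triangle and orthogonality, with bifunctoriality coming from uniqueness. Your spelling out of the $\Hom$-isomorphism is just a more explicit version of the paper's one-line factorization argument.
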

\begin{proof}
According to the preceding paragraph, the assumption implies that
 the tensor product $\otimes$ in $\T(S)$ preserves non-negative objects.
Consider the canonical map
$$
\tau_{\geq p}^\delta(\E) \otimes \tau^\delta_{\geq q}(\F)
 \xrightarrow \phi \E \otimes \F.
$$
According to the preceding assertion, the left hand-side is in homological degrees
 $\geq p+q$. Consider the distinguished triangle:
$$
\tau_{\geq {p+q}}^\delta(\E \otimes \F)
 \xrightarrow{a} \E \otimes \F
 \xrightarrow{b} \tau_{<{p+q}}^\delta(\E \otimes \F) \xrightarrow{+1}
$$
We deduce that the composition $b \circ \phi$ is zero. So $\phi$ uniquely
 factors through $a$ giving us the desired map $\bar \phi$. The bifunctoriality of $\bar \phi$ follows from the uniqueness.
\end{proof}

\begin{ex}\label{ex:heart_monoidal}
Consider the assumptions of the preceding proposition.

The previous pairing is associative in an obvious sense
 - this follows from the uniqueness of the map $\bar \phi$.
 Hence the symmetric monoidal structure on $\T(X)$ induces
 a canonical symmetric monoidal structure on $\hrt{\T(X)}$, using the formula,
 for $\delta$-homotopy modules $\E$ and $\F$:
$$
\E \otimes^H \F:=\tau_{\leq 0}^\delta (\E\otimes \F).
$$
Note in particular that the canonical functor:
$$
\T(X)_{\geq_\delta 0} \rightarrow \hrt{\T(X)},
 \E \mapsto \tau_{\leq 0}^\delta(\E)=H_0^\delta(\E)
$$
is monoidal.
\end{ex}

\begin{rem}
Beware that the monoidal structure defined above a priori depends on
 $\delta \geq 0$. To be more precise,
 though the heart for two different choices of dimension functions $\delta \geq 0$, $\delta' \geq 0$
 are  equivalent as additive categories, according to Remark \ref{rem:delta_independence}, this equivalence is not compatible
 in general with the tensor structures induced by $\delta$
 and $\delta'$.

Indeed, if $k$ is a field and we are in the motivic
 (or the homotopical) case, if we take a dimension function
 $\delta$ on $k$ such that $\delta(k)=n>0$, then the
  induced tensor product on the heart is just the zero bi-functor !
The situation of a positive dimensional base is more complicated
 though, and it seems there are as many non-trivial tensor structures
 induced as in the above example as the dimension of the base.
\end{rem}

\subsection{Recollection on purity and duality}
\label{subsec:purityandduality}

\begin{num}\label{num:purity}
Consider a $\T$-spectrum $\E$ over a scheme $S$.

Let us recall a construction from \cite{DJK}. Let $f:Y \rightarrow X$
 be a quasi-projective lci morphism of $S$-schemes with cotangent complex $L_f$.
 Let $\E_X$ be the pullback of $\E$ along $X/S$.
 Then we associate to $f$ a purity transformation (see \cite[4.3.1]{DJK});
 evaluated at the object $\E_X$, it gives a canonical map:
$$
\mathfrak p_f:f^*(\E_X) \otimes \Th(L_f) \rightarrow f^!(\E_X).
$$
where $\Th(L_f)$ is the Thom space associated with the perfect complex $L_f$.
The following definition extends classical considerations; in the motivic case,
 see \cite[4.3.9]{DJK}.
\end{num}
\begin{df}\label{df:purity}
Consider the above notations.

We will say that $\E$ is {\it absolutely pure} if for any quasi-projective morphism
 $f:Y \rightarrow X$ between regular schemes, the map $\mathfrak p_f$ is an isomorphism.

If $S$ is regular, we will say that $\E$ is {\it $S$-pure} if
 for any quasi-projective morphism $f:X \rightarrow S$ with $X$ regular,
 $\mathfrak p_f$ is an isomorphism.
\end{df}

\begin{ex}\label{ex:purity}
\begin{enumerate}
\item \underline{Motivic case}: the constant motive $\un_{S_0}$ is absolutely pure.
 Equivalently, $\un_S$ is $S$-pure for any regular scheme $S$.
\item \underline{Homotopical case}:
 Then any spectrum $\E$ over the base field $k$ is absolutely pure
 (see \cite[4.3.10(ii)]{DJK}).
 Equivalently, any spectrum $\E$ over a regular base $S$ is $S$-pure.
\end{enumerate}
\end{ex}

An almost immediate corollary of the $S$-purity assumption
 is the following duality statement.
\begin{prop}\label{prop:duality}
Let $f:X \rightarrow S$ be a morphism and $\E$ be a $\T$-spectrum over
 $S$. We assume one of the following hypothesis is fulfilled:
\begin{itemize}
\item $f$ is essentially smooth.
\item $f$ is essentially quasi-projective, $X$ and $S$ are regular
 and $\E$ is $S$-pure.
\end{itemize}
Then for any pair $(n,v) \in \ZZ \times \uK(X)$,
 the map $\pur_f$ gives an isomorphism:
\begin{equation}\label{eq:duality}
\E^n(X,v) \xrightarrow \sim \E_{-n}(X/S,\dtw{L_f}-v)
\end{equation}
which is contravariantly natural in $X$ with respect to \'etale maps.
\end{prop}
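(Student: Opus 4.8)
The strategy is to extract \eqref{eq:duality} directly from the purity transformation $\pur_f\colon f^*\E\otimes\Th_X(\dtw{L_f})\to f^!\E$ of Paragraph~\ref{num:purity}, once one knows it is an isomorphism, by a formal computation exploiting that Thom spaces are $\otimes$-invertible. I would first treat the case where $f$ has finite type, postponing the essentially-finite-type case. Under the first hypothesis $\pur_f$ is invertible because for a smooth morphism it is the relative purity isomorphism, which holds unconditionally in the six-functor formalism (\cite{DJK}); under the second hypothesis a quasi-projective morphism between regular schemes is lci, so $\pur_f$ is defined, and its invertibility is exactly the $S$-purity hypothesis on $\E$ (Definition~\ref{df:purity}).

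Granting that $\pur_f$ is invertible, I unwind both sides. By definition
\[
\E_{-n}(X/S,\dtw{L_f}-v)=\Hom_{\T(X)}\big(\Th_X(\dtw{L_f}-v)[-n],\,f^!\E\big).
\]
Because $\Th_X\colon\uK(X)\to\T(X)$ sends sums to tensor products and $\uK(X)$ is a group, every $\Th_X(w)$ is $\otimes$-invertible with inverse $\Th_X(-w)$; tensoring both arguments of the $\Hom$ by the invertible object $\Th_X(v-\dtw{L_f})[n]$ therefore produces a bijection onto
\[
\Hom_{\T(X)}\big(\un_X,\,f^!\E\otimes\Th_X(v-\dtw{L_f})[n]\big).
\]
Since $\Th_X(v-\dtw{L_f})\simeq\Th_X(v)\otimes\Th_X(-\dtw{L_f})$ and $\pur_f$ gives $f^!\E\otimes\Th_X(-\dtw{L_f})\simeq f^*\E$, this last group is identified with $\Hom_{\T(X)}(\un_X,f^*\E\otimes\Th_X(v)[n])=\E^n(X,v)$, the claimed isomorphism \eqref{eq:duality}, and by construction it is induced by $\pur_f$.

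For contravariant naturality in $X$ along an étale $S$-morphism $g\colon X'\to X$, set $f'=fg$. Since $g$ is étale one has $L_{f'}\simeq g^*L_f$, and $\pur_{f'}$ is compatible with $\pur_f$: there is a commuting square relating $\pur_{f'}$ to $g^*(\pur_f)$ through the connecting isomorphisms $g^*f^*\simeq (fg)^*$, $g^*f^!\simeq g^!f^!\simeq (fg)^!$, and $g^*\simeq g^!$ (\cite{DJK}). Transporting this compatibility through the identifications of the previous paragraph shows that \eqref{eq:duality} intertwines the pullback $\E^n(X,v)\to\E^n(X',g^*v)$ with the pullback on bivariant theory $\E_{-n}(X/S,\dtw{L_f}-v)\to\E_{-n}(X'/S,\dtw{L_{f'}}-g^*v)$; applied to the transition maps of a pro-system, the same compatibility lets the finite-type isomorphisms be assembled in a limit.

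It remains to remove the finite-type restriction. Write $X=\varprojlim_\lambda X_\lambda$ as a cofiltered limit of separated finite-type $S$-schemes with affine étale transition maps, arranged so that each $X_\lambda$ is smooth over $S$ (resp. regular and quasi-projective over $S$). Then $L_f\simeq g_\lambda^*L_{f_\lambda}$ along the projections $g_\lambda\colon X\to X_\lambda$ (the transitions being étale), any twist descends to some level, and by the continuity assumption (T2) together with the definition of bivariant theory on essentially finite-type schemes (Paragraph~\ref{num:extension}) both sides of \eqref{eq:duality} are the filtered colimits over $\lambda$ of the corresponding groups for $X_\lambda$, compatibly---by the naturality just discussed---with the finite-type isomorphisms; passing to the colimit finishes the proof. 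I expect the one genuinely delicate point to be this last reduction under the second hypothesis: one must propagate regularity of the limit $X$, after replacing each $X_\lambda$ by a suitable open subscheme, to the terms of the pro-system, which uses the noetherian/excellence conventions in force---the remainder is formal bookkeeping with Thom spaces.
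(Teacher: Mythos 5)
Your proposal is correct and follows essentially the same route as the paper: the finite-type case is the "tautological" unwinding of the invertible purity transformation (which you spell out via invertibility of Thom spaces), étale naturality comes from the compatibility of $\pur_f$ with étale pullbacks as in \cite[3.3.2(iii)]{DJK}, and the essentially-finite-type case follows by the limit argument using (T2) and the extension of bivariant theory of Paragraph~\ref{num:extension}. The paper's proof is simply a terser version of the same argument, so no further comparison is needed.
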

\begin{proof}
The case where $f$ is of finite type is tautological,
 while the contravariance with respect to \'etale map follows from
 the compatibility of $\pur_f$ with \'etale pullbacks
 (apply \cite[3.3.2(iii)]{DJK} in the case where $p$ is \'etale).

The general case is obtained using the previous one,
 together with the naturality with respect to \'etale maps,
 and the extension of bivariant theory described in \ref{num:extension}.
\end{proof}

\begin{rem}
\begin{enumerate}
\item This isomorphism can be described,
 at least when $f$ is of finite type,
 as the cap-product by the fundamental class $\eta_f \in \E_0(X/S,\dtw{L_f})$
 of $f$: see \cite[4.3.9, 2.3.14]{DJK}.
 In fact, the description of the above isomorphism in the general case
 immediately follows when one considers the obvious extension of
 the notion of fundamental classes to essentially quasi-projective
 lci morphisms.
\item Recall that if either $\E$ is an object of $\DM(S,R)$ or if
 $E$ is an $\MGL$-module\footnote{in other words, an oriented spectrum;
 here module is to be understood in the sense of the monoidal category
 $\SH(S)$ \emph{i.e.} in the weak homotopical sense.}
 over $S$ then the Thom isomorphism implies we can canonically identify
 the twist by a virtual vector bundle with the Tate twist by its rank.
 In particular, the above isomorphism takes the following classical
 form:
\begin{equation}\label{eq:duality_or}
\E^{n,i}(X) \xrightarrow \sim E_{2d-n,d-i}(X/S)
\end{equation}
where $d$ is the relative dimension of $f$.
\end{enumerate}
\end{rem}

\section{Fiber homology and Gersten complexes}
\label{sec:fiberhomology}
In this section, we investigate the heart of the homotopy $t$-structure discussed in the preceding section in greater detail.  Section~\ref{subsec:fiberdeltahomology} is concerned with studying an approximation to the notation of homology associated with the truncation functors for the homotopy $t$-structure.  Definition~\ref{df:delta-hlg} introduces the notion of fiber $\delta$-homology, which is esentially the restriction of $\delta$-homology to points.  This section concludes with Theorem~\ref{thm:effectivedeltahomotopytstructure}, which introduces an ``effective" variant of the $t$-stucture of Section~\ref{subsec:recollections}, ameliorating some unboundedness issues that naturally arise because we work in a stable context.  Section~\ref{subsec:Gerstencomplexes} recalls conventions for exact couples, and introduces Gersten complexes (see Definition~\ref{df:Gersten}), built out of fiber $\delta$-homology, which essentially form the $E_1$-page of the niveau spectral sequence.  We also give a detailed description of the differentials of the Gersten complexes and important formal properties of these complexes are summarized in Proposition~\ref{prop:Gersten}.  Finally, Section~\ref{sub:products} is devoted to analyzing multiplicative structure in these complexes and the associated spectral sequences.

\subsection{Fiber $\delta$-homology}
\label{subsec:fiberdeltahomology}

\begin{num}
Recall from the introduction that we have also considered $\GG$-twists on bivariant theory. Indeed these twists are more natural with respect to the $\delta$-homotopy $t$-structure, because the functor $-(1)[1]$ is $t_\delta$-exact.  Using this grading, one can reformulate Proposition \ref{prop:homological_delta} for a given $\T$-spectrum $\E$ over $S$ as the equivalence of the following conditions:
\begin{itemize}
\item[(i)] $\E \geq 0$ (resp. $\E \leq 0$).
\item[(ii)] For any separated scheme $X/S$ essentially of finite type, one has:
\begin{align*}
\E_{n,\gtw *}(X/S)=0
 \text{ when } & n<\delta_-(X)\ \big(\text{resp. } n>\delta_+(X)\big).
\end{align*}
\item[(iii)] For any point $x \in S(K)$, one has:
\begin{align*}
\E_{n,\gtw *}(x)=0
 \text{ when } & n<\delta(x)\ \big(\text{resp. } n>\delta(x)\big).
\end{align*}
\end{itemize}
\end{num}

In view of this characterization of the $\delta$-homotopy $t$-structure, we have adopted the following definition in \cite{BD1}.

\begin{df}
\label{df:delta-hlg}
Let $\E$ be a $\T$-spectrum over $S$.  We define the \emph{fiber $\delta$-homology} of $\E$ in degree
 $n \in \ZZ$ as the functor
\[
\hat H_n^\delta(\E):\pts(S) \rightarrow \ab^\ZZ,
 x \mapsto \big( \E_{\delta(x)+n,\gtw{\delta(x)-r}}(x) \big)_{r \in \ZZ}
\]
where $\pts(S)$ is the discrete category of points of $S$.  Dually, we define the \emph{fiber $\delta$-cohomology} of $\E$ as $\hat H^n_\delta(\E)=\hat H_{-n}^\delta(\E)$ (the $\GG$-grading does not change).

Given an object $\F$ of the $\delta$-homotopy heart, we set $\hat \F^\delta_*:=\hat H_0^\delta(\F)$.
\end{df}

Therefore $\E \geq 0$ (resp. $\E \leq 0$) if and only if $\hat H_n^\delta(\E)=0$ for $n<0$ (resp. $n>0$).

\begin{rem}\label{rem:Rost_modules}
Fiber $\delta$-homology is a good approximation of $\delta$-homology.  In fact, given an object $\F$ of the heart of the $\delta$-homotopy $t$-structure on $\T(S)$, the functor $\hat\F_*^\delta$ is an approximation of a cycle module in the sense of Rost.

In the motivic case, this idea can be turned into an equivalence of categories between the $\delta$-homotopy heart of $\DM(S,R)$ and the category of Rost $R$-linear cycle modules over $S$ in the sense of \cite{Ros}.  The details of such a theorem have not yet been written up, but see \cite{Deg17}.  In the homotopical case, a generalization of Rost's theory is in the work (see \cite{Feld}).

In any case, we have already obtained in \cite[4.2.2]{BD1} that, in the general case of an abstract triangulated motivic category $\T$ satisfying our general assumptions, the functor:
\[
\hrt{\T(S)} \rightarrow \mathrm{PSh}\big(\pts(S),R-\text{mod}^\ZZ\big),
 \F \mapsto \hat \F^\delta_*
\]
is conservative, exact and commmutes with colimits.  Similarly, the family of functors $(\hat H_n^\delta)_{n \in \ZZ}$
 is conservative on the whole category $\T(S)$.
\end{rem}

\begin{ex}\label{ex:unit_stable}
Let $S$ be a regular connected scheme, and put $d=\delta(S)$.  We fix a point $x:\spec(K) \rightarrow S$.
\begin{enumerate}
\item Abstractly, using any $S$-pure spectrum $\E$,
 one obtains, because of the duality isomorphism \eqref{eq:duality}
 and relation (D2) of dimension functions,
 a canonical isomorphism:
\[
\hat H_n^\delta(\E)_r(x)
 \simeq \E^{-n-r}\big(\spec(K),\dtw{L_{x/S}}-\dtw{\delta(x)}+\dtw r\big).
\]
\item Assume we are in the \underline{motivic case}. One then obtains (using the previous computation) a canonical isomorphism:
\[
\hat H^n_\delta(\un_S)_r(x)=H^{r-n-2d,r-d}_M(\spec(K),R).
\]
This follows because under our assumptions, motivic cohomology satisfies absolute
 purity, and is oriented.

Observe, for example, that:
\[
\hat H_d^\delta(\un_S)_*=\hat H^{-d}_\delta(\un_S)_*=K_*^M|_S,
\]
the restriction of the Milnor K-theory functor to the discrete category $\pts(S)$.

Additionally, the only vanishing that we have is: $\hat H_n^\delta(\un_S)=0$ if $n>d$. In other words, $\un_S$ is concentrated in $t_\delta$-homological degrees $]-\infty,d]$.  On the other hand, we know for a fact, at least when $S$ is a complex scheme, that for all $n \leq d$, $\hat H_n^\delta(S) \neq 0$ (this is due to the existence and non-triviality of polylogarithm elements; see \cite[Ex. 3.3.2]{BD1}).  So $\un_S$ is $t_\delta$-unbounded below.

\item Assume we are in the \underline{homotopical case}.  Then one obtains, using the computation of point (1),
 a non-canonical isomorphism:
\[
\hat H_n^\delta(\un_S)_r(x)
 \simeq \pi_{n+d}^{\AA^1}(S^0_K)_{r-d}[1/p].
\]
This is because absolute purity is automatic in the homotopical case:
 see point (2) of Example \ref{ex:purity}.

In particular, Morel's computation of the zeroth stable $\AA^1$-homotopy sheaf of the sphere gives an isomorphism, again non-canonical:
\[
\hat H_d^\delta(\un_S)_r(x) \simeq K_r^{MW}(K)
\]
where the right hand side is the $r$-th Milnor--Witt cohomology group of the field $K$.

Moreover, the zero sphere spectrum $\un_S$ is concentrated in homological degrees $]-\infty,d]$ for the $\delta$-homotopy $t$-structure.  Not much else can be said in this case since, when $S$ is a complex
 scheme, the rational homology of the zero sphere spectrum agrees with the rational homology of the motivic spectrum,
 which is non-trivial.  Thus, $\un_S$ is unbounded below for the $\delta$-homotopy $t$-structure.

\item Again in the homotopical case, we can consider the Milnor--Witt motivic ring spectrum
 $\HH \tilde R_S$.  Known computations of Milnor--Witt cohomology imply, as in the case of motivic cohomology, that $\HH\tilde R_S$ is concentrated in homological degrees $]-\infty,d]$.  Moreover, the unit of this ring spectrum induces a canonical isomorphism:
\[
\hat H_d^\delta(\un_S)_* \xrightarrow \sim \hat H_d^\delta(\HH \tilde R_S)_*.
\]
\end{enumerate}
\end{ex}

\begin{rem}
In the non-oriented context, it appears more natural to introduce another twist for a virtual vector bundle $v$ over a scheme $X$, namely:
\[
\un_X\gtw v:=\Th_X(v)[-\rk(v)].
\]
This notation is consistent with our notation $\un_X\gtw r$ for the $\GG$-twist (where $r$ corresponds to the trivial bundle of rank $r$).

Assuming $\delta(S)=0$ to simplify, the isomorphism of point (1) can be rewritten as:
\[
\hat H_n^\delta(\E)_r(x)
 \simeq \E^{-n}\big(\spec(K),\gtw{L_{x/S}}+\gtw r\big).
\]
\end{rem}

The unboundedness of the constant object, observed in Points (2), (3), (4) of the previous example, appears to be a consequence of working in the stable context -- recall that stability with respect to Tate twists is among the axioms of triangulated motivic categories used in \cite{CD3}.  This potentially unpleasant feature can be corrected as follows.

\begin{df}\label{df:delta-eff}
(see \cite[Def. 2.2.1]{BD1})
Let $S$ be an arbitrary scheme. We define the triangulated category of $\delta$-effective $\T$-spectra over $S$, denoted by $\T^{\delta-\eff}(S)$, as the full localizing subcategory of $\T(S)$ generated by objects of the form:
\[
f_!(\un_X)(n)
\]
where $f:X \rightarrow S$ is separated of finite type and $\delta(X)\geq n$.

Given a $\T$-spectrum $\E$ over $S$, we define its \emph{effective fiber $\delta$-homology $H_n^{\delta\eff}(\E)$} as the negatively graded functor obtained from $\hat H_n^{\delta}(\E)$ by restricting the $\GG$-grading to negative integers.
\end{df}

It follows from \cite[3.1.1]{BD1} that one can define an effective version of the $\delta$-homotopy $t$-structure.

\begin{thm}
\label{thm:effectivedeltahomotopytstructure}
Consider the notations of the previous definition.  For any scheme $S$, there exists a $t$-structure on $\T^{\delta\eff}(S)$ whose homologically non-negative (resp. non-positive) objects are those $\T$-spectra $\E$ over $S$ such that $\hat H_n^{\delta\eff}(\E)=0$ for $n<0$ (resp. $n>0$).  This $t$-structure is non-degenerate and satisfies glueing (recall: \cite[1.4.10]{BBD}, Remark \ref{rem:glueing}).
\end{thm}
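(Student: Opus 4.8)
The statement to prove is Theorem~\ref{thm:effectivedeltahomotopytstructure}: the existence of an effective $\delta$-homotopy $t$-structure on $\T^{\delta\eff}(S)$, together with non-degeneracy and gluing. The strategy is to deduce this from the already-established (non-effective) $\delta$-homotopy $t$-structure of Theorem~\ref{thm:deltahomotopytstructure}, exploiting the fact (cited from \cite[3.1.1]{BD1}) that $\T^{\delta\eff}(S)$ is a nice localizing subcategory of $\T(S)$. First I would set $\T^{\delta\eff}(S)_{\geq 0}$ to be the full subcategory of objects $\E \in \T^{\delta\eff}(S)$ with $\hat H_n^{\delta\eff}(\E) = 0$ for $n < 0$, and dually $\T^{\delta\eff}(S)_{\leq 0}$ for the vanishing when $n>0$. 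Since the effective fiber $\delta$-homology is obtained from ordinary fiber $\delta$-homology by restricting the $\GG$-grading to negative integers (Definition~\ref{df:delta-eff}), an object of $\T^{\delta\eff}(S)$ that is $\geq 0$ (resp.\ $\leq 0$) for the ambient $\delta$-homotopy $t$-structure on $\T(S)$ is \emph{a fortiori} $\geq 0$ (resp.\ $\leq 0$) in the effective sense; the content is the converse direction, and the key input is that the inclusion $\T^{\delta\eff}(S) \hookrightarrow \T(S)$ admits the structure needed to transport truncations. Concretely, by \cite[3.1.1]{BD1} the generators $f_!(\un_X)(n)$ with $\delta(X)\geq n$ form a set of compact generators, so $\T^{\delta\eff}(S)$ is well generated and the inclusion has a right adjoint $\nu$ (the ``effective cover'' functor).

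\textbf{Constructing the truncations.} The heart of the argument is to produce, for each $\E \in \T^{\delta\eff}(S)$, a truncation triangle $\tau_{\geq 0}^{\delta\eff}\E \to \E \to \tau_{<0}^{\delta\eff}\E \xrightarrow{+1}$ with the two outer terms in $\T^{\delta\eff}(S)_{\geq 0}$ and $\T^{\delta\eff}(S)_{<0}$ respectively. I would first show that the generators $f_!(\un_X)(n)$ with $\delta(X)\geq n$ are homologically non-negative: this is exactly the content of $t_\delta$-positivity of $f_!$ (from \ref{num:basic_t_exact}, since $f_!(\un_X)$ is $\geq_\delta \delta(X)$ and Tate twists shift homological degree compatibly), so these generators already lie in $\T(S)_{\geq 0}$, hence in $\T^{\delta\eff}(S)_{\geq 0}$. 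Then $\T^{\delta\eff}(S)_{\geq 0}$ is the localizing-type subcategory generated under extensions, coproducts, and positive shifts by objects that are already non-negative, and one can invoke the standard Alexander/Beilinson machinery (as in \cite[1.3.15, 1.4.3.2]{BBD} or the well-generated variant) to get that $\T^{\delta\eff}(S)_{\geq 0}$ is the non-negative part of a $t$-structure, with $\tau_{\geq 0}^{\delta\eff}$ its coreflection. Alternatively — and this is probably the cleanest route — I would define $\tau_{\geq 0}^{\delta\eff}\E := \nu(\tau_{\geq 0}^{\delta}\E)$ where $\tau_{\geq 0}^{\delta}$ is the ambient truncation and $\nu$ the effective cover, and then check: (a) $\nu$ preserves non-negativity, since $\nu$ is a filtered homotopy colimit of the non-negative generators; (b) the cofiber computes $\tau_{<0}^{\delta\eff}\E$ and is effective and $<0$; (c) the defining Hom-vanishing $\Hom(\E,\F)=0$ for $\E \geq 0$, $\F < 0$ holds in $\T^{\delta\eff}(S)$ because it holds in $\T(S)$ and $\T^{\delta\eff}(S)$ is full. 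The identification of the heart and of $H_n^{\delta\eff}$ with the $n$-th homology of this $t$-structure then follows formally, and the characterization via vanishing of effective fiber $\delta$-homology matches by construction, using the conservativity of $(\hat H_n^\delta)_n$ from Remark~\ref{rem:Rost_modules}.

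\textbf{Non-degeneracy and gluing.} Non-degeneracy means $\bigcap_n \T^{\delta\eff}(S)_{\geq n} = 0 = \bigcap_n \T^{\delta\eff}(S)_{\leq n}$; this is immediate from the characterization via fiber $\delta$-homology together with the conservativity of the family $(\hat H_n^{\delta\eff})_{n}$ on $\T^{\delta\eff}(S)$ — an object killed by all $\hat H_n^{\delta\eff}$ is killed by all $\hat H_n^\delta$ on the effective part, hence zero. For gluing along a closed immersion $i:Z\to S$ with open complement $j:U\to S$, I would mimic the proof of Remark~\ref{rem:glueing} in the effective setting: the six functors restrict to the effective subcategories (the localization triangle $j_!j^* \to \mathrm{id} \to i_*i^*$ preserves effectivity, and $i^!$, $j^*$ preserve it since they are computed from generators of the required form), and one then reduces the equivalence ``$\E$ non-negative $\iff$ $j^*\E$ and $i^*\E$ non-negative'' (resp.\ with $i^!$ for non-positivity) to the corresponding statement in $\T(S)$, $\T(U)$, $\T(Z)$, which is Remark~\ref{rem:glueing}, plus the $t$-exactness properties of \ref{num:basic_t_exact}. \textbf{The main obstacle} I anticipate is step (a)/(b) above: verifying that the effective cover functor $\nu$ is $t$-exact for the non-effective $\delta$-homotopy $t$-structure — equivalently, that the inclusion $\T^{\delta\eff}(S)\hookrightarrow\T(S)$ is $t$-exact — so that $\nu(\tau_{\geq 0}^\delta\E)$ genuinely has the truncation property inside the subcategory. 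This requires knowing that $\nu$ does not create homology in negative degrees, which should follow from writing $\nu(\F)$ as a colimit of the non-negative generators $f_!(\un_X)(n)$ smashed against $\F$ and using that such colimits of non-negative objects are non-negative (the $\delta$-homotopy $t$-structure being compatible with coproducts, by Theorem~\ref{thm:deltahomotopytstructure} and compact generation), but one must be careful that the colimit presenting $\nu(\F)$ can be arranged with non-negative terms when $\F$ is non-negative; this is essentially where one invokes \cite[3.1.1]{BD1} in an essential way.
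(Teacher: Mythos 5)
There is a genuine gap, and it sits at the heart of both routes you propose: you implicitly treat the effective $t$-structure as the restriction of the ambient $\delta$-homotopy $t$-structure along the inclusion $s:\T^{\delta\eff}(S)\hookrightarrow \T(S)$, and this is false. Your first route rests on the claim that the generators $f_!(\un_X)(n)$ with $\delta(X)\geq n$ lie in $\T(S)_{\geq 0}$ because ``$f_!(\un_X)\geq_\delta \delta(X)$''; but right $t_\delta$-exactness of $f_!$ only helps if $\un_X$ itself is bounded below, and it is not: taking $X=S$, $n=0$, the generator is $\un_S$, which in both the motivic and homotopical cases is $t_\delta$-unbounded below (Example \ref{ex:unit_stable}(2),(3)). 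For the same reason your ``cleanest route'' $\tau^{\delta\eff}_{\geq 0}:=\nu\circ\tau^{\delta}_{\geq 0}$ and the accompanying step (c) fail: the aisle of the statement (vanishing of $\hat H_n^{\delta\eff}$ for $n<0$, i.e.\ only the non-positive $\GG$-weights are tested) is \emph{strictly larger} than $\T^{\delta\eff}(S)\cap\T(S)_{\geq 0}$. Indeed $\un_S[-d]$ lies in the effective heart (Example \ref{ex:unit_eff}) while being ambient-unbounded below, so the required orthogonality $\Hom(\E,\F)=0$ for effective $\E\geq 0$, $\F<0$ cannot be ``inherited by fullness'' from $\T(S)$, and $\nu(\tau^\delta_{\geq 0}\E)\to\E$ is not the effective truncation (for $\E=\un_S[-d]$ it would have to be an isomorphism, which it is not unless one already knows $\nu$ kills $\tau^\delta_{<-d}\un_S$). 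The $t$-exactness of the right adjoint $w=\nu$ recorded in Paragraph \ref{num:delta-effective} is $t$-exactness \emph{for the effective $t$-structure on the target}, so invoking it here is circular. Correcting the mismatch between the two $t$-structures is precisely the purpose of Definition \ref{df:delta-eff}, as the paper explains just before it.

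Beyond this, the steps you label as formal carry the actual mathematical content. The existence of a $t$-structure generated by the objects $f_!(\un_X)(n)$, $\delta(X)\geq n$, is indeed general machinery, but the identification of its non-negative and non-positive parts with the vanishing of effective fiber $\delta$-homology, the conservativity of the family $(\hat H_n^{\delta\eff})_n$ on $\T^{\delta\eff}(S)$ used for non-degeneracy (especially left non-degeneracy), and the gluing statement are theorems requiring the $\delta$-niveau spectral sequence and the hypotheses (T1)--(T4); none of this ``follows by construction.'' Note also that the paper itself gives no proof of this statement: it is quoted from \cite{BD1}, where the effective $t$-structure is constructed by generators and its homological characterization, exactness properties of $s$ and $w$, non-degeneracy and gluing are established by separate arguments parallel to (and relying on) the proof of Theorem \ref{thm:deltahomotopytstructure}. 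A correct write-up along your lines would have to reproduce that work rather than deduce the effective case from the ambient one.
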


\begin{ex}\label{ex:unit_eff}
Let $S$ be a regular connected scheme, and put $d=\delta(S)$.
\begin{enumerate}
\item Assume we are in the \underline{motivic case}.
 Then the computation done in Example \ref{ex:unit_stable}(1)
 shows that $\hat H_n^\delta(\un_S)_{\leq 0}=0$ if $n \neq d$.
 In other words, $\un_S[-d]$ is in the heart of the effective $\delta$-homotopy
 $t$-structure. This is exactly what happens for the perverse
 $t$-structure.
\item Assume we are in the \underline{homotopical case}.
 The preceding example shows that the Eilenberg-MacLane motivic ring
 spectrum $\HH R_S[-d]$ is in the heart of the effective $\delta$-homotopy
 $t$-structure.

The computations of higher stable homotopy groups of spheres do
 not allow us at the moment to conclude anything about the
 sphere spectrum. However, the computations of Milnor-Witt motivic
 cohomology imply, as in the motivic case, that $\HH_{MW}R_S[-d]$
 is in the heart of the effective $\delta$-homotopy
 $t$-structure.
\end{enumerate}
\end{ex}

\begin{num}\label{num:delta-effective}
The $\delta$-effective categories $\T^{\delta\eff}(S)$, with the $t$-structure just defined, satisfy good properties. We refer the reader to \cite{BD1} for the following facts.  First, by construction, one has a pair of adjoint functors:
\[
s:\T^{\delta\eff}(S) \leftrightarrows \T(S):w
\]
such that $s$ is fully faithful and $w$ is $t_\delta$-exact.
\begin{itemize}
\item If $\delta \geq 0$, the subcategory $\T^{\delta\eff}(S)$ of $\T(S)$ is stable
 under tensor products. So $\T^{\delta\eff}(S)$ becomes a closed symmetric monoidal category
 with internal hom given by:
\[
\uHom_{\T^{\delta\eff}}(S)=w \circ \uHom_{\T(S)}(M,N).
\]
Moreover, the tensor product on $\T^{\delta\eff}(S)$ is right $t_\delta$-exact.
\item For $f:X \rightarrow S$ essentially of finite type with $\dim(f)\leq d$, we get an adjunction of $t$-categories:
\[
f^*(d)[2d]:\T^{\delta\eff}(S) \leftrightarrows \T^{\delta\eff}(X):w \circ \big(f_*(-d)[-2d] \big).
\]
When $f$ is smooth of pure dimension $d$, $f^*(d)[2d]$ is $t_\delta$-exact.
\item For $f:X \rightarrow S$ separated and essentially of finite type,
 we get an adjunction of $t$-categories:
\[
f_!:\T^{\delta\eff}(X) \leftrightarrows \T^{\delta\eff}(S):w \circ f^!.
\]
Moreover, $w \circ f^!$ is $t_\delta$-exact.
\end{itemize}
In the last two points, the dimension function on $X$ is that induced by the one fixed on $S$ (see our Notations and conventions).
\end{num}


\subsection{Gersten complexes}
\label{subsec:Gerstencomplexes}
We now build a spectral sequence using the theory of exact couples.  Since conventions for exact couples vary in the literature, we now fix our conventions, which agree with those used in \cite[3.1.2]{BD1}.

\begin{df}
\label{df:exact_couple}
Let $\A$ be an abelian category.  A {\it homological exact couple of degree $d>0$ in $\A$} is the data of a pair of bigraded objects $(D,E)$ of $\A$ together with a triangle of homogeneous maps $(a,b,c)$
\[
\xymatrix@=30pt{
D\ar_a^{(-1,+1)}[rr] && D\ar_/4pt/b^{(0,0)}[ld] \\
 & E\ar_-/-4pt/c^{(-d,d-1)}[lu] &
}
\]
such that consecutive maps fit together to form an exact sequence.
\end{df}

As usual, one derives a (homologically indexed) spectral sequence from such an exact couple \cite{McC}: at page $r$, the $r$-th term is equal to the differential bigraded abelian group $(E,b \circ c)$.


\begin{num}\label{num:delta-niveau}
We first recall $\delta$-niveau spectral sequences,  in the abstract setting and using dimension function following \cite[3.1.5]{BD1}.\footnote{The main idea is of course classical, introduced by Grothendieck, and thoroughly developed in \cite{BO}. The main novelty of \cite{BD1} is to work over an arbitrary base scheme $S$ (rather than a field)
 and appeal to abstract dimension functions.  This use of dimension functions can also be found in the homological indexing for Chow groups in \cite{stack}. See \cite[3.1.7]{BD1} for more on this point.}

Let $X$ be a separated $S$-scheme essentially of finite type.  Recall from \cite[3.1.1]{BD1} that a $\delta$-flag of $X$ is an increasing sequence of reduced closed subschemes $Z_*=(Z_p)_{p \in \ZZ}$ of $X$ such that $\delta(Z_p)\leq p$ for all $p$. The set $\flag(X)$ of $\delta$-flags, ordered by term-wise inclusion, is cofiltered. Given such a $\delta$-flag, the classical properties of the bivariant theory $\E_{*}(-/-)$ 
 (see our Notations and conventions) imply that we have a long exact sequence:
\[
\E_{p+q}(Z_{p-1}/S) \xrightarrow a \E_{p+q}(Z_{p}/S)
 \xrightarrow b \E_{p+q}(Z_{p}-Z_{p-1}/S)
 \xrightarrow c \E_{p+q-1}(Z_{p-1}/S)
\]
where $a$ (resp. $b$) is pushfoward (resp. pullback) along the obvious closed (resp. open) immersion, and $c$ the boundary map. These long exact sequences are covariantly functorial with respect to inclusion of flags. Thus we can combine them into the following homological exact couple of degree $1$:
\begin{align*}
{}^\delta D_{p,q}&=\ilim_{Z_* \in \flag(X)} \big(\E_{p+q}(Z_{p}/S)\big), \\
{}^\delta E^1_{p,q}&=\ilim_{Z_* \in \flag(X)} \big(\E_{p+q}(Z_{p}-Z_{p-1}/S)\big).
\end{align*}
Finally, observe that one can express the $E_1$-term as follows:
\[
{}^\delta E^1_{p,q}=\oplus_{x \in X_{(p)}} \E_{p+q}(x/S),
\]
where
\[
X_{(p)}=\{ x \in X \mid \delta(x)=p \}.
\]
\end{num}

\begin{df}\label{df:delta_niveau}
Under the assumptions and notations above, the spectral sequence:
\[
{}^\delta E^1_{p,q}=\bigoplus_{x \in X_{(p)}} \E_{p+q}(x/S) \Rightarrow \E_{p+q}(X/S)
\]
will be called the $\delta$-niveau spectral sequence of $X/S$ with coefficients in $\E$.

The spectral sequence converges to the following filtration, called the $\delta$-niveau filtration:
\[
N_p\E_*(X/S)=\bigcup_{i:Z \rightarrow X, \delta(Z)=p}
 \mathrm{Im}\big(i_*:\E_*(Z/S) \rightarrow \E_*(X/S)\big),
\]
where $i$ runs over the closed immersions whose source has $\delta$-dimension $p$.
\end{df}

It is standard to consider this spectral sequence not just for $\E$ itself, but also for all Tate twists.  In fact, if we apply this definition to the graded spectrum $\E(n)$ for an integer $n \in \ZZ$, we get the following form:
\[
{}^\delta E^1_{p,q}=\bigoplus_{x \in X_{(p)}} \E_{p+q,n}(x/S) \Rightarrow \E_{p+q,n}(X/S).
\]

In view of Rost's theory of cycle modules, it will be useful to introduce the following definition.

\begin{df}
\label{df:Gersten}
Under the assumptions of the preceding definition, we define the \emph{Gersten ($\delta$-homologi\-cal) complex}
 of $X/S$ with coefficients in $\E$, denoted by $C_*^\delta(X,\E)$, as the complex of abelian groups located at the line $q=0$ of the $E_1$-term of the $\delta$-niveau spectral sequence.  The $p$-th homology of this complex will be called the \emph{Gersten $\delta$-homology of $\E$}
 and be denoted by:
\[
A_p^\delta(X,\E)=H_p\big(C_*^\delta(X,\E)\big).
\]
\end{df}

Note that the Gersten complex is concentrated in degrees $[\delta_-(X),\delta_+(X)]$.  Moreover, using Definition \ref{df:delta-hlg} for fiber $\delta$-homology, the $p$-th term of this complex takes the following form:
\[
C_p^\delta(X,\E) = \bigoplus_{x \in X_{(p)}} H_0^{\delta}\E_p(x/S).
\]

\begin{rem}
The complex defined above is closely linked with Rost's theory of cycle modules.  We have used a simplification here. Indeed recall that cycle modules, as well as their associated complexes, are $\ZZ$-graded. We can recover this $\ZZ$-grading by applying the above definition to the $\ZZ$-graded spectrum:
\[
\E\{*\}=\big( \E(n)[n] \big)_{n \in \ZZ}.
\]
It is also standard in stable $\aone$-homotopy theory to consider this grading, called the {\it $\GG$-grading}.

The complex $C_*^\delta(X,\E\{*\})$ is $\ZZ$-graded, and has the following form:
\[
C_*^\delta(X,\E\{*\})=\bigoplus_{x \in X_{(p)}} H_0^{\delta}\E_{p-*}(x/S).
\]
The main reason to use this grading is that the differentials of the Gersten complex are then homogeneous of degree $-1$.
\end{rem}

\begin{num}
\label{num:differentials}
Consider the notations of the above definition.  The differentials of the Gersten complex associated with $\E$ and $X/S$
 take the following form:
\[
C_p^\delta(X,\E)=\bigoplus_{x \in X_{(p)}} \E_p(x/S)
 \xrightarrow{d_p}
 \bigoplus_{s \in X_{(p-1)}} \E_p(s/S)=C_{p-1}^\delta(X,\E)
\]
The differential $d_p$ is obtained as the inductive limit of differentials $d_p^Z$ associated with a $\delta$-flag $Z_*$
 and is described as follows:
\[
\xymatrix@C=16pt@R=20pt{
\E_p(Z_p/S)\ar[r]
 & E_p(Z_p-Z_{p-1}/S)\ar^c[r]\ar_/-30pt/{d_p}[rrd]
 & E_{p-1}(Z_{p-1}/S)\ar@{=}[d] & \\
& E_{p-1}(Z_{p-2}/S)\ar[r]
 & E_{p-1}(Z_{p-1}/S)\ar_-b[r]
 & E_{p-1}(Z_{p-1}-Z_{p-3}/S)
}
\]
where the rows arise as pieces of localization long exact sequences.

Using the functoriality of the localization long exact sequences with respect to pullbacks along open immersions, it is therefore possible to explicitly describe the differentials of this complex as follows.  Consider a pair $(x,s) \in X_{(p)} \times X_{(p-1)}$ and denote by $(d_p)_y^x:\E_p(x/S)_{r-p} \rightarrow \E_{p-1}(y/S)_{r-p}$ the corresponding component of the above differential.  Let us write $Z(x)$ the reduced closure of $x$ in $X$.  From the construction of the Gersten complex given above, and the functoriality of localization long exact sequence in bivariant theory with respect to pullbacks along open immersions and pushforward along closed immersions, one then deduces that:
\begin{itemize}
\item if $s \in Z(x)$: we let $Z(x)_{(s)}$ be the localization of $Z(x)$
 at $s$; this is a $1$-dimensional scheme so that $Z(x)_{(s)}=\{x,s\}$.
 Then $(d_p)_s^x$ is the boundary map of the localization long exact sequence
 associated with the closed immersion $i:\{s\} \rightarrow Z(x)_{(s)}$.
 Explicitly, it is the middle map in the following exact sequence:
\[
\E_p\big(Z(x)_{(s)}/S\big) \xrightarrow{j^*} \E_p(x/S)
 \xrightarrow{(d_p)_s^x} \E_{p-1}(s/S) \xrightarrow{i_*} \E_{p-1}\big(Z(x)_{(s)}/S\big)
\]
\item Otherwise, $(d_p)_s^x=0$.
\end{itemize}
\end{num}

The next result summarizes the formal properties of Gersten complexes.

\begin{prop}
\label{prop:Gersten}
Consider the notations of the above definition.
\begin{enumerate}
\item The complex $C^\delta_*(X,\E)$ is covariantly functorial in $\E$.
 Given any integer $p$, the induced maps
\[
\ C^\delta_*(X,\E) \rightarrow C^\delta_*(X,\tau_{\leq p}\E) , \
C^\delta_*(X,\tau_{\geq -p}\E) \rightarrow C^\delta_*(X,\E)
\]
are isomorphisms provided $p \geq 0$. In particular, one has a canonical isomorphism:
\[
C^\delta_*(X,\E) \simeq C^\delta_*(X,H_0^\delta\E).
\]
\item The complex $C^\delta_*(X,\E)$ is functorial in $X/S$,
 covariantly with respect to proper maps and contravariantly with respect
 to \'etale maps.
\item If the $\T$-spectrum $\E$ is homologically non-$t_\delta$-negative
 (resp. non-$t_\delta$-positive),
 there exists a canonical epimorphism
 (resp. monomorphism):
\begin{align*}
&\E_{p}(X/S) \rightarrow A_p^\delta(X,\E) \\
&\text{resp. } A_p^\delta(X,\E) \rightarrow \E_{p}(X/S).
\end{align*}
 These two maps are functorial in the $S$-scheme $X$,
 covariantly with respect to proper maps and contravariantly with respect
 to \'etale maps.

If $\E$ is in the $\delta$-homotopy heart, or more generally is concentrated in one degree for the $\delta$-homotopy $t$-structure, these two maps are inverse isomorphisms giving a functorial identification:
\[
\E_{p}(X/S) \simeq A_p^\delta(X,\E).
\]
\end{enumerate}
\end{prop}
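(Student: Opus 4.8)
The plan is to read off all three statements from the formal structure of the $\delta$-niveau spectral sequence of Paragraph~\ref{num:delta-niveau}, of which $C_*^\delta(X,\E)$ is the line $q=0$ of the $E^1$-page and $A_*^\delta(X,\E)$ the line $q=0$ of the $E^2$-page. Three inputs will be used repeatedly: the niveau exact couple is assembled, over the points $x$ of $X$, out of the bivariant groups $\E_{m}(x/S)$, each of which is a cohomological functor of $\E$; pushforward along a proper morphism and pullback along an \'etale morphism in bivariant theory are compatible with the localization long exact sequences that build the exact couple; and a proper morphism does not raise $\delta$ while an \'etale morphism preserves it.

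For (1), the first input already makes $C_*^\delta(X,-)$, with all of the differentials described in Paragraph~\ref{num:differentials}, a covariant functor of $\E$, and turns a distinguished triangle $\E'\to\E\to\E''\xrightarrow{+1}$ into, for each fixed $x$, a long exact sequence relating the groups $\E'_{*}(x/S)$, $\E_{*}(x/S)$, $\E''_{*}(x/S)$. I would apply this to the two truncation triangles $\tau^\delta_{\geq p+1}\E\to\E\to\tau^\delta_{\leq p}\E\xrightarrow{+1}$ and $\tau^\delta_{\geq -p}\E\to\E\to\tau^\delta_{\leq -p-1}\E\xrightarrow{+1}$. Since $\delta(x)=p$ for $x\in X_{(p)}$, Paragraph~\ref{num:homological_delta_ref} gives $(\tau^\delta_{\geq p+1}\E)_{m}(x/S)=0$ for $m<2p+1$ --- in particular for $m=p$ and, because $p\geq 0$, also for $m=p-1$ --- and likewise $(\tau^\delta_{\leq -p-1}\E)_{m}(x/S)=0$ for $m>-1$, in particular for $m=p$ and $m=p+1$. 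Feeding these vanishings into the long exact sequences shows that the component at each $x\in X_{(p)}$ of $C_*^\delta(X,\E)\to C_*^\delta(X,\tau^\delta_{\leq p}\E)$, resp.\ of $C_*^\delta(X,\tau^\delta_{\geq -p}\E)\to C_*^\delta(X,\E)$, is an isomorphism, so the corresponding chain maps are isomorphisms of complexes; taking $p=0$ in both and composing identifies $C_*^\delta(X,\E)$ with $C_*^\delta(X,\tau^\delta_{\geq 0}\tau^\delta_{\leq 0}\E)=C_*^\delta(X,H_0^\delta\E)$.

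For (2), the second and third inputs show that proper pushforward and \'etale pullback induce morphisms of the niveau exact couples (this is the functoriality of the $\delta$-niveau spectral sequence already recorded in \cite{BD1}), compatibly with the explicit differentials of Paragraph~\ref{num:differentials}; this yields the asserted variances of $C_*^\delta(X,-)$, hence of $A_*^\delta(X,-)$. For (3), when $\E$ is $t_\delta$-non-negative the reformulation of Proposition~\ref{prop:homological_delta} in terms of the $\GG$-grading gives $\E_{p+q}(x/S)=0$ for $q<0$, so the $\delta$-niveau spectral sequence of $X/S$ lives in the half-plane $q\geq 0$; in particular its abutment filtration on $\E_p(X/S)$ --- the $\delta$-niveau filtration of Definition~\ref{df:delta_niveau} --- is exhaustive already in column $p$, so the bottom-row edge homomorphism furnishes a canonical map $\E_p(X/S)\to A_p^\delta(X,\E)$, which I would then show is surjective. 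Dually, if $\E$ is $t_\delta$-non-positive the spectral sequence lives in $q\leq 0$ and the edge homomorphism furnishes a canonical map $A_p^\delta(X,\E)\to\E_p(X/S)$, which I would show is injective. All these maps are natural for proper and \'etale morphisms because the whole niveau spectral sequence is. Finally, if $\E$ is concentrated in a single $t_\delta$-degree, both one-sided vanishings hold simultaneously, the spectral sequence collapses onto the single column $q=0$, and the two edge homomorphisms are visibly mutually inverse isomorphisms.

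The step I expect to be the main obstacle is the surjectivity, resp.\ injectivity, assertion in (3): one must check that the edge homomorphism meets all of $A_p^\delta(X,\E)$ (equivalently, that the line $q=0$ of the niveau spectral sequence is not trimmed by higher differentials in the relevant range of columns), and this is precisely where I expect to need the identification $C_*^\delta(X,\E)\simeq C_*^\delta(X,H_0^\delta\E)$ from part (1) together with the $t$-exactness properties collected in Paragraph~\ref{num:basic_t_exact}. A secondary, more bookkeeping-intensive, obstacle is to set up the functoriality of the niveau spectral sequence in $X$ carefully enough to support part (2), in particular the compatibility of the Gersten differentials of Paragraph~\ref{num:differentials} with proper pushforward and \'etale pullback.
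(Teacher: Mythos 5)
Your overall route is the paper's route: everything is read off the $\delta$-niveau exact couple of Paragraph~\ref{num:delta-niveau}. For part (1), your argument via the two truncation triangles together with the vanishing of Paragraph~\ref{num:homological_delta_ref} is exactly what the paper's one-line appeal to ``functoriality in $\E$'' amounts to, and your numerical estimates are correct; the only slip is that you verify the componentwise isomorphism only at points $x\in X_{(p)}$ with $\delta(x)$ equal to the truncation index $p$, whereas the assertion concerns the whole chain map --- the same estimate ($(\tau^\delta_{\geq p+1}\E)_n(x/S)=0$ for $n<p+1+\delta(x)$, resp.\ $(\tau^\delta_{\leq -p-1}\E)_n(x/S)=0$ for $n>-p-1+\delta(x)$) applies verbatim in every column, so nothing is lost. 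For part (2), you defer, as the paper does, to the known functoriality of niveau spectral sequences; the paper's precise reference is \cite{Jin1} (proofs of Propositions 3.11 and 3.12), together with the observation that in the unoriented setting \'etale contravariance causes no twist because the cotangent complex of an \'etale map is trivial.

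The step you flag in part (3) is a genuine gap, and the route you suggest for closing it would not suffice. Concentration in the half-plane $q\geq 0$ (resp.\ $q\leq 0$) plus convergence only yields, for $\E\geq_\delta 0$, a surjection $\E_p(X/S)\twoheadrightarrow E^\infty_{p,0}$ together with an inclusion $E^\infty_{p,0}\subseteq E^2_{p,0}=A_p^\delta(X,\E)$ (resp., for $\E\leq_\delta 0$, a surjection $E^2_{p,0}\twoheadrightarrow E^\infty_{p,0}$ and an inclusion $E^\infty_{p,0}\subseteq\E_p(X/S)$). Promoting this to an epimorphism onto (resp.\ a monomorphism from) $E^2_{p,0}$ is precisely the statement that the higher differentials $d^r$ leaving (resp.\ entering) the line $q=0$ vanish, and that does not follow from the half-plane concentration: those differentials take values in (resp.\ originate from) rows built out of $H^\delta_q\E$ for $q\neq 0$, so neither the identification $C^\delta_*(X,\E)\simeq C^\delta_*(X,H_0^\delta\E)$ from part (1) nor the $t$-exactness facts of Paragraph~\ref{num:basic_t_exact} bear on them; equivalently, the epi/mono claim amounts to the vanishing, on bivariant theory of all finite type $X/S$, of the maps induced by the Postnikov (k-invariant) attaching morphisms of $\E$, which is the genuinely nontrivial content of (3). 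To be fair, the paper's own proof of (3) says no more than ``convergence plus the computation of the $E_1$-term,'' so you have not overlooked an argument the paper supplies; but as a standalone proof this step remains open in your sketch. Two cosmetic points: the collapse at the end of (3) is onto the row $q=0$, not a column, and it occurs on the row $q=0$ literally only when $\E$ is concentrated in degree $0$ (for concentration in a single degree $n$ the spectral sequence sits on the row $q=n$).
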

\begin{proof}
Point (1) follows from the $\delta$-niveau spectral sequence and its obvious functoriality in $\E$ (Paragraph \ref{num:delta-niveau}).

Point (2) follows from the classical functoriality of the ($\delta$-)niveau spectral sequences. We refer the reader to
 \cite{Jin1}, proof of Proposition 3.11 for proper functoriality, and proof of Proposition 3.12 for \'etale contravariance. Note an important technical point here: we deal with the case of a general triangulated category $\T$, in contrast with the special case $\T=\DM$ which is oriented. In the case of proper functoriality, this does not come into play. The case of \'etale contravariance works as well as the tangent bundle of an \'etale map is trivial.

Point (3) is a consequence of the convergence of the $\delta$-niveau spectral sequence and of the computation of its $E_1$-term.
\end{proof}

\begin{rem}
\begin{enumerate}
\item According to the isomorphism of Point (3) above, one obtains using the main result of
 \cite{DJK} that, given a $\delta$-homotopy module $\E$, the Gersten homology $A_*^\delta(X,\E)$
 is contravariant in $X$ with respect to any smoothable lci morphism $f:Y \rightarrow X$.
 More precisely, one gets a canonical morphism:
\[
A_*^\delta(X,\E) \simeq \E_{p}(X/S)
 \rightarrow \E_p(Y/S,\dtw{L_{Y/X}}) \simeq A_*^\delta(Y,\E\dtw{L_{Y/X}})
\]
where the last identification uses the fact $\E\dtw{L_{Y/X}}$
 is concentrated in one degree over any connected component of $Y$
 --- which is the virtual rank of $L_{Y/X,\eta}$ where $\eta$ is any generic point of the
 chosen connected component.
\item As another illustration of the connexion of our Gersten complexes
 with Rost's theory of cycle complexes, when $S$ is the spectrum of a perfect field $k$, $\delta$ is the obvious dimension function,
 and we are in the motivic case $\T=\DM(k,R)$.
 Then any object $\E$ in the heart of $\DM(k,R)$ is a homotopy module with transfers in the sense of \cite{Deg9} which canonically corresponds to a cycle modules $\hat \E_*=\hat \E_*^\delta$
 --- this notation corresponds to the one of Definition \ref{df:delta-hlg}.
 Moreover, according to \cite[2.7(ii)]{Deg11}, there exists a canonical isomorphism of complexes
\[
C_*^\delta(X,\E) \simeq C_*(X,\hat \E_*)
\]
where the right hand-side is Rost cycle complex associated with the cycle module $\hat \E_*$.
\end{enumerate}
\end{rem}

\subsection{Products}
\label{sec:products}

\begin{num}\label{num:products_Gesten_hlg}
Consider a $\T$-spectrum $\E$ over $S$.

In order to describe products, we will use cohomological notations. We put:
\[
C^p_\delta(S,\E)=C_{-p}^\delta(S,\E),
 \text{ resp. }
 A^p_\delta(S,\E):=A_{-p}^\delta(S,\E)
\]
and call it the \emph{Gersten $\delta$-cohomological complex} (resp. \emph{$\delta$-cohomology}) of $S$ with coefficients in $\E$.  Note in particular that, when $\E$ is in the heart, we get from Proposition~\ref{prop:Gersten} a canonical isomorphism:
\[
A^p_\delta(S,\E) \simeq H^p(S,\E).
\]
Consider now a morphism of $\T$-spectra over $S$:
\[
\mu:\E \otimes \F \rightarrow \G.
\]
We deduce as usual a morphism at the level of cohomologies:
\[
\E^p(S) \otimes_\ZZ\F^q(S) \rightarrow \G^{p+q}(S)
\]
by sending a pair of maps $(a:\un_S \rightarrow \E[p],b:\un_S \rightarrow \F[q])$ to the following map:
\[
\un_S \xrightarrow{a \otimes b} \E \otimes \F[p+q] \xrightarrow \mu \G[p+q].
\]
Using the preceding isomorphism, we deduce a canonical pairing:
\[
A^p_\delta(S,\E) \otimes_\ZZ A^q_\delta(S,\F) \rightarrow A^{p+q}_\delta(S,\G).
\]
We deduce the following results.
\end{num}

\begin{prop}
\label{prop:products_Gesten_hlg}
If $\E$ is a ring $\T$-spectrum over $S$, then $A^*_\delta(S,\E)$ has a ring structure.  If $\F$ is a $\T$-spectrum with equipped with an $\E$-module-structure, then $A^*(S,\F)$ is equipped with the structure of a module over $A^*_\delta(S,\E)$.
\end{prop}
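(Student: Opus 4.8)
The plan is to reduce the statement to the standard multiplicativity of $\T$‑cohomology of a ring object of the $\delta$‑homotopy heart; the essential input — the pairing attached to a morphism $\E\otimes\F\to\G$ — is already provided by Paragraph~\ref{num:products_Gesten_hlg}.

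First I would reduce to the heart. By Proposition~\ref{prop:Gersten}(1) the canonical map of complexes $C^\delta_*(S,\E)\to C^\delta_*(S,H_0^\delta\E)$ is an isomorphism, whence a canonical identification $A^*_\delta(S,\E)=A^*_\delta(S,H_0^\delta\E)$, and likewise for $\F$; so it suffices to equip $A^*_\delta(S,H_0^\delta\E)$ with a ring structure and $A^*_\delta(S,H_0^\delta\F)$ with a module structure over it. For this one needs that $H_0^\delta$ carries the multiplicative data along. I would argue that, under the non‑negativity hypothesis on $\delta$ in force here (cf. Example~\ref{ex:heart_monoidal}), the functor $H_0^\delta\colon\T(S)\to\hrt{\T(S)}$ is lax symmetric monoidal: taking $p=q=0$ in Proposition~\ref{prop:homotopy_pairing} gives a natural, associative pairing $\tau_{\geq0}^\delta\E\otimes\tau_{\geq0}^\delta\F\to\tau_{\geq0}^\delta(\E\otimes\F)$, and composing with the truncation $\tau_{\geq0}^\delta\to\tau_{\leq0}^\delta\tau_{\geq0}^\delta=H_0^\delta$ and invoking the monoidality of $\tau_{\leq0}^\delta$ on connective objects from Example~\ref{ex:heart_monoidal} yields a natural transformation $H_0^\delta\E\otimes^H H_0^\delta\F\to H_0^\delta(\E\otimes\F)$ together with a compatible unit. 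Consequently $\bar\E:=H_0^\delta\E$ becomes a (commutative) monoid in $(\hrt{\T(S)},\otimes^H)$, with multiplication $\mu^H$, and $\bar\F:=H_0^\delta\F$ becomes a $\bar\E$‑module.

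Next I would identify the pairing with a cup product and conclude. Since $\bar\E$ and $\bar\F$ lie in the heart, Proposition~\ref{prop:Gersten}(3) provides canonical isomorphisms $A^p_\delta(S,\bar\E)\simeq H^p(S,\bar\E)$ and $A^q_\delta(S,\bar\F)\simeq H^q(S,\bar\F)$. The morphism $\bar\E\otimes\bar\E\to\bar\E$ obtained by precomposing $\mu^H$ with the truncation $\bar\E\otimes\bar\E\to\tau_{\leq0}^\delta(\bar\E\otimes\bar\E)=\bar\E\otimes^H\bar\E$, and the analogous action map $\bar\E\otimes\bar\F\to\bar\F$, are exactly of the shape to which Paragraph~\ref{num:products_Gesten_hlg} applies; by that construction the induced pairings on $A^*_\delta(S,\bar\E)$ and $A^*_\delta(S,\bar\F)$ correspond, under the above isomorphisms, to the cup product on $H^*(S,\bar\E)$ and to the cup action of $H^*(S,\bar\E)$ on $H^*(S,\bar\F)$. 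It then remains to check that the cup product is associative and unital and that the action is compatible. This is formal: one applies $\Hom_{\T(S)}(\un_S,-[*])$ to the monoid and module coherence diagrams for $\bar\E$ in the heart, using $\un_S\otimes\un_S\simeq\un_S$ and the naturality of the truncation maps $(-)\otimes(-)\to\tau_{\leq0}^\delta\big((-)\otimes(-)\big)$ to rephrase the relevant diagrams in terms of $\otimes^H$. Transporting back along Proposition~\ref{prop:Gersten}(1) equips $A^*_\delta(S,\E)$ with a ring structure and $A^*_\delta(S,\F)$ with a module structure over it.

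The hard part will be the first step, namely verifying that $H_0^\delta$ really does send the (possibly non‑connective) ring $\T$‑spectrum $\E$ to a genuine monoid of the heart and the unit $\un_S\to\E$ to a unit there — i.e.\ checking the coherence (including the unit constraints) of the lax‑monoidal structure assembled from Proposition~\ref{prop:homotopy_pairing} and Example~\ref{ex:heart_monoidal}. Once that bookkeeping is done, the comparison with cup products and the verification of their formal properties in the second step are routine.
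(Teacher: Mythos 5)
Your proposal is correct and takes essentially the paper's route: the paper's ``proof'' is just the construction of Paragraph~\ref{num:products_Gesten_hlg} (the pairing on cohomology plus the identification $A^p_\delta(S,-)\simeq H^p(S,-)$ for objects of the heart), combined with the multiplicativity of $H_0^\delta$ supplied by Proposition~\ref{prop:homotopy_pairing} and Example~\ref{ex:heart_monoidal}, which is exactly what you spell out. The only remark is that, as you note, transporting the ring structure of a general ring $\T$-spectrum through $H_0^\delta$ uses $\delta\geq 0$; the paper leaves this hypothesis implicit at this point (it is acknowledged in the footnote of the Notations and used in the example following the proposition).
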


\begin{ex}\label{ex:products_Gersten&d-homology}
If $\E$ be a ring $\T$-spectrum over $S$, then according to Example \ref{ex:heart_monoidal}, the graded $\delta$-homotopy module $H_*^\delta(\E)$ has the structure of a ring spectrum. According to the preceding
 proposition, $A^*_\delta(S,H_*^\delta(\E))$ has the structure of a bigraded ring.
\end{ex}

\begin{rem}
\begin{enumerate}
\item Products are thus easy to obtain on our Gersten $\delta$-homology.  It is possible to follow Rost's approach in \cite{Ros} and to get a definition of the product on the level of complexes.  Using classical techniques due to Levine, it is even possible to find a dg-algebra underlying our Gersten $\delta$-homology (see \cite{lev1, Lev2, BaYa}).  An advantage of our approach is that it circumvents these technicalities.
\item One can also extend the preceding considerations to the case of smooth $S$-schemes. Indeed, in that case, one has:
\[
A^p_\delta(X,\E)=A_{-p}^\delta(X,\E) \simeq \E_{-p}(X/S)=H^p(X,\E\dtw{L_{X/S}})
\]
according to Proposition \ref{prop:duality}. The preceding proposition obviously extends to that case. Note that the product obtained for smooth $S$-schemes is now compatible with \'etale pullbacks as defined in Proposition \ref{prop:Gersten}, and even with respect to smooth pullbacks.
\end{enumerate}
\end{rem}

\section{The homotopy Leray spectral sequences}
\label{sec:Leray_ssq}
This section contains the main theoretical results of the paper and constructs the spectral sequences we mentioned in the introduction.  We introduce two versions of this spectral sequence.  Section~\ref{subsec:homologicalversion} studies a homological version of the Leray spectral sequence; the main result is Theorem~\ref{thm:Leray_ssp}, which gives a description of the $E_2$-page of the relevant spectral sequence.  Proposition~\ref{prop:formalpropertieshomologicallerayss} summarizes formal properties of the resulting spectral sequences.  On the other hand, Section~\ref{sec:coh_version} contains a cohomological version of the spectral sequence (summarized in Theorem~\ref{thm:deltahomotopylerayss}) and discusses compatibility with product structures.  Finally, Section~\ref{subsec:simplicity} contains a discussion of an analog of locally constant sheaves in our situation.

\subsection{The homological version}
\label{subsec:homologicalversion}
\begin{num}
\label{num:Leray_ssp}
Let us fix a $\T$-spectrum $\E$ over $S$ and consider the following geometric situation:
\[
\xymatrix@=15pt{
X\ar^f[rr]\ar_\pi[rd] && B \\
& S &
}
\]
where $f$ is any morphism of schemes, which plays the role of the fibration.
 We assume $\pi$ is a separated morphism essentially of finite type and put $\E^!_X=\pi^!\E$.
 Given a $\T$-spectrum $\E$ over $B$, we put according to what was
 announced in the paragraph on Notations and conventions:
\[
H_p(B,\E)=\Hom_{\T(B)}(\un_B[p],\E).
\]

Then one can look at the tower of homological truncations of $f_*(\E_X^!)$
 for the $\delta$-homotopy $t$-structure, which is the analogue of the Postnikov tower
 in our situation:
\[
\hdots \rightarrow \tau^\delta_{\geq q+1}(f_*\E_X^!) \rightarrow \tau^\delta_{\geq q}(f_*\E_X^!)
 \rightarrow \tau^\delta_{\geq q-1}(f_*\E_X^!) \rightarrow \hdots
\]
It is standard to deduce from this filtration a spectral sequence.  Let us be more precise. We first consider the canonical distinguished triangle:
\[
\tau^\delta_{\geq q+1}(f_*\E_X^!) \rightarrow \tau^\delta_{\geq q}(f_*\E_X^!) \rightarrow
 \tau^\delta_{=q}(f_*\E_X^!) \rightarrow \tau^\delta_{\geq q+1}(f_*\E_X^!)[1]
\]
where:
\[
\tau^\delta_{=q}(f_*\E_X^!)=\tau^\delta_{\leq q}\tau^\delta_{\geq q}(f_*\E_X^!)=H_q^\delta(f_*\E_X^!)[q].
\]
Applying the functor $H_{p+q}(B,-)$ to the preceding distinguished triangle, we get a long exact sequence:
\[
H_{p+q}\left(B,\tau^\delta_{\geq q+1}(f_*\E_X^!)\right)
 \xrightarrow a H_{p+q}\left(B,\tau^\delta_{\geq q}(f_*\E_X^!)\right)
 \xrightarrow b H_{p}\left(B,H_q^\delta(f_*\E_X^!)\right)
 \xrightarrow c H_{p+q-1}\left(B,\tau^\delta_{\geq q+1}(f_*\E_X^!)\right).
\]
Using the conventions of definition \ref{df:exact_couple}, we then get a homological exact couple of degree $2$ such that:
\begin{align*}
D_{p,q}&=H_{p+q}\left(B,\tau^\delta_{\geq q}(f_*\E_X^!))\right), \\
E_{p,q}&=H_{p}\left(B,H^\delta_{q}(f_*\E_X^!)\right).
\end{align*}
We deduce from that exact couple our main construction.
\end{num}

\begin{thm}
\label{thm:Leray_ssp}
Fix assumptions and notations as in \ref{num:Leray_ssp}.  The exact couple defined above gives a convergent spectral sequence of the form:
\[
E^2_{p,q}(f,\E)=H_{p}\left(B,H^\delta_{q}(f_*\E_X^!)\right)
 \simeq A_p^\delta\big(B,H^\delta_{q}(f_*\E_X^!))\big)
 \Rightarrow \E_{p+q}(X/S).
\]
The $\E_2$-term is concentrated in the range $p\in [\delta_-(X),\delta_+(X)]$ and is the homology in degree $p$ of the $\delta$-homological Gersten complex $C_*^\delta\big(B,H^\delta_{q}(f_*\E))\big)$ (see Definition \ref{df:Gersten}) which takes the form:
\[
\hdots \rightarrow \bigoplus_{x \in B_{(p)}} \E_{p+q}(X_x/S)
 \xrightarrow{\ d_p^G\ } \bigoplus_{s \in B_{(p-1)}} \E_{p+q-1}(X_s/S)
\rightarrow \hdots
\]
where $X_x$ is the fiber of $f$ above the point $x \in B$.
\end{thm}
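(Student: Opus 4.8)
The plan is to extract everything from the exact couple constructed in Paragraph \ref{num:Leray_ssp} and to identify its terms using the results of Sections \ref{sec:homtopytstructure} and \ref{sec:fiberhomology}. First I would invoke the standard machinery of exact couples (Definition \ref{df:exact_couple} and the paragraph following it): the homological exact couple of degree $2$ with $D_{p,q}=H_{p+q}(B,\tau^\delta_{\geq q}(f_*\E_X^!))$ and $E_{p,q}=H_p(B,H^\delta_q(f_*\E_X^!))$ yields a spectral sequence whose $E^1$-page is $E_{p,q}$ and whose $E^2$-page is the homology of $E^1$ with respect to the differential $b\circ c$. A small bookkeeping point is that, because the couple has degree $2$, the $E^1$-page of the couple is what we are calling the $E^2$-page of the spectral sequence (the reindexing $E^r_{\text{couple}}=E^{r+1}_{\text{ss}}$), which accounts for the statement being phrased in terms of $E^2$; I would make this explicit.

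Next I would handle convergence. The truncation tower $\cdots\to\tau^\delta_{\geq q+1}(f_*\E_X^!)\to\tau^\delta_{\geq q}(f_*\E_X^!)\to\cdots$ is exhaustive and Hausdorff in the relevant sense because the $\delta$-homotopy $t$-structure is non-degenerate (Theorem \ref{thm:deltahomotopytstructure}) and, crucially, because of the boundedness of $f_*$: by the $t_\delta$-amplitude statement in \ref{num:basic_t_exact}, $f_*$ has amplitude $[0,d]$, so $f_*\E_X^!$ is bounded above for the $t$-structure whenever $\E_X^!$ is, and more to the point the Gersten complexes $C_*^\delta(B,-)$ are concentrated in the finite range $[\delta_-(B),\delta_+(B)]$ (equivalently $[\delta_-(X),\delta_+(X)]$, using $\delta(X_x)$ bookkeeping), so for each fixed total degree only finitely many columns contribute. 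Concretely, I would argue that the functor $H_{p+q}(B,-)=\Hom_{\T(B)}(\un_B[p+q],-)$ applied to the tower gives, for fixed $p+q$, a tower that stabilizes on both ends: on one end by the amplitude of $f_*$ combined with $t$-exactness of $\pi^!$ (so the positive truncations eventually vanish in a given degree after smashing with the generator $\un_B$, which lives in a single $\delta$-degree range controlled by $\delta_+(B)$), and on the other end by non-degeneracy. This is the standard "conditionally convergent, and the $E^2$-page has finitely many nonzero columns, hence convergent" argument; I would cite the relevant convergence criterion and the fact that only finitely many columns are nonzero.

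Then comes the identification of the $E^2$-page. Each homotopy module $H^\delta_q(f_*\E_X^!)$ is an object of the heart $\hrt{\T(B)}$, so Proposition \ref{prop:Gersten}(3) gives a functorial identification $H_p(B,H^\delta_q(f_*\E_X^!))=\E'_p(B/B)\simeq A_p^\delta(B,H^\delta_q(f_*\E_X^!))$ — here I need to be slightly careful that $H_p(B,-)$ in the notation of \ref{num:Leray_ssp} is bivariant theory of $B$ relative to $B$ with coefficients in the homotopy module, which is exactly the setting of Proposition \ref{prop:Gersten}(3) since the identity map $B\to B$ is its own $(!)$-functor. This also immediately gives the concentration in the range $p\in[\delta_-(B),\delta_+(B)]$, which equals $[\delta_-(X),\delta_+(X)]$ because $f$ is surjective onto its image and the $\delta$-dimensions match (or, if $f$ is not surjective, one restricts to the closed image and the range only shrinks; I would make a remark here, or simply note $[\delta_-(X),\delta_+(X)]$ is the a priori correct range via the niveau filtration on $X$). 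The explicit form of the Gersten complex then follows from the formula in \ref{num:differentials}: the $p$-th term is $\bigoplus_{x\in B_{(p)}}H_0^\delta\big(H^\delta_q(f_*\E_X^!)\big)_p(x/B)$, and the remaining task is to rewrite this stalk as $\E_{p+q}(X_x/S)$. For this I would use: (a) fiber $\delta$-homology of a homotopy module at a point $x$ recovers the bivariant theory evaluated at $x$, i.e. $H_0^\delta(\F)_p(x/B)=\F_p(x/B)$ for $\F$ in the heart (Definition \ref{df:delta-hlg} together with Proposition \ref{prop:Gersten}); (b) the stalk of $H^\delta_q(f_*\E_X^!)$ at $x$, by the standard compatibility of truncation with the smooth/essentially-smooth pullback $i_x^*$ (which is $t_\delta$-exact up to shift, \ref{num:basic_t_exact}) together with proper-but-really-just-base-change $i_x^* f_* \simeq (f_x)_* i_{X_x}^*$ for the cartesian square defining the fiber $X_x$, equals $H^\delta_q$ of $(f_x)_*$ of the restriction of $\E_X^!$ to $X_x$, i.e. essentially $(f_x)_*(\E^!_{X_x})$; and then (c) evaluating $H_{p+q}$ against $\un_x$ and using the triangulated adjunctions $((f_x)_!,(f_x)^!)$ and $((f_x)^*,(f_x)_*)$ — or more precisely the bivariant pairing — collapses $\Hom_{\T(x)}(\un_x[p+q],(f_x)_*\pi_x^!\E)$ to $\E_{p+q}(X_x/S)$.

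The main obstacle I anticipate is step (b)–(c) of that last identification: verifying carefully that the stalk at $x\in B$ of the homotopy module $H^\delta_q(f_*\E_X^!)$ is exactly the bivariant theory $\E_{p+q}(X_x/S)$ of the scheme-theoretic fiber, as opposed to something involving a nearby-cycle/pro-object over the localization of $B$ at $x$. This requires invoking continuity (assumption (T2)) and the compatibility of the $\delta$-homotopy $t$-structure with the localization $i_x:\spec\kappa(x)\to B$ (gluing plus $t$-exactness, Remark \ref{rem:glueing} and \ref{num:basic_t_exact}), and then base change for the cartesian square $X_x\to X$, $\spec\kappa(x)\to B$. The subtlety is that $\spec\kappa(x)$ is only essentially of finite type over $B$, so base change must be taken in the pro-sense and one must check it is compatible with $f_*$ (which is where one uses that $f_*$ commutes with the relevant filtered limits, again via (T2), or alternatively reduces to finite-type situations by approximation). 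Everything else — the exact-couple formalism, convergence from finiteness of columns, the identification $H_p(B,\F)\simeq A_p^\delta(B,\F)$ for $\F$ in the heart — is essentially formal given the results already established in the excerpt; so I would spend the bulk of the written proof on nailing down this fiber identification and its naturality, and be brief about the rest.
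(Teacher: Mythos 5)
Your overall architecture is the same as the paper's (exact couple of degree $2$, convergence from the finite range of columns, identification $H_p(B,\F)\simeq A_p^\delta(B,\F)$ for $\F$ in the heart via Proposition \ref{prop:Gersten}, then a base-change computation identifying the stalk terms with $\E_{p+q}(X_x/S)$), but the key step --- the one you yourself single out as the heart of the matter --- is carried out with the wrong functor and an invalid exchange. The Gersten complex terms are, by definition, bivariant groups of the point $x$ over $B$, i.e.\ they are computed with $x^!$, not $x^*$; and the base change you invoke in step (b), $x^*f_*\simeq f_{x*}i_x^*$ (in the paper's notation, with $i_x:X_x\to X$), is false for a general morphism $f$: it requires $f$ proper or $x$ (essentially) smooth over $B$, neither of which holds here ($f$ is an arbitrary morphism and $x$ is a general point of $B$). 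A concrete failure: take $f$ an open immersion and $x$ a point of the closed complement; then $x^*f_*\E$ is a nontrivial ``nearby'' object while the fiber $X_x$ is empty, so $f_{x*}i_x^*\E=0$. Ironically this is exactly the nearby-cycle phenomenon you worry about in your final paragraph; it is avoided not by a pro-object argument for $x^*$, but by using $x^!$ throughout. The correct chain, which is the paper's, is
\[
\E_{*}(x/B,f_*\E_X^!)=\big[\un_x[*],x^!f_*\E_X^!\big]
 \simeq \big[\un_x[*],f_{x*}i_x^!\E_X^!\big]
 \simeq \big[\un_{X_x}[*],i_x^!\pi^!\E\big]=\E_{*}(X_x/S),
\]
where the exchange $x^!f_*\simeq f_{x*}i_x^!$ is the right adjoint of proper base change for $x_!$ (valid since $x$ is separated, and extended to essentially finite type points via Proposition \ref{prop:extension_shrick} and continuity (T2)), the next step is the adjunction $(f_x^*,f_{x*})$, and $i_x^!\pi^!=\pi_x^!$. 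Also, $x^!$ (not $x^*$) is the $t_\delta$-exact functor by \ref{num:basic_t_exact}, which is what lets one pass from $H_q^\delta(f_*\E_X^!)$ to the untruncated spectrum in these stalk computations. With $i_x^*$ replaced by $i_x^!$ everywhere, your argument becomes the paper's proof.

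Two smaller points. First, your convergence discussion gestures at boundedness of $f_*\E_X^!$ via the amplitude of $f_*$; this is neither available in general (already $\un_S$ is $t_\delta$-unbounded below, Example \ref{ex:unit_stable}) nor needed: for fixed total degree $n$, the characterization in \ref{num:homological_delta_ref} applied to $B$ gives $H_n(B,\tau^\delta_{\geq q}\F)=0$ for $q>n-\delta_-(B)$ and $H_n(B,\tau^\delta_{<q}\F)=0$ for $q\ll 0$, so the filtration is finite in each degree; this, together with the finitely many nonzero columns, is all that is used. Second, the concentration range $p\in[\delta_-(X),\delta_+(X)]$ is not a matter of ``$\delta$-dimensions matching'': a column $p$ can be nonzero only if some $x\in B_{(p)}$ has $X_x\neq\emptyset$, and then closed points of $X_x$ are points of $X$ with $\delta=p$ while all points of $X_x$ have $\delta\geq p$, whence $\delta_-(X)\leq p\leq\delta_+(X)$.
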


Note, in particular, that the differentials $d^r_{p,q}$ are trivial for $r>\big(\delta_+(X)-\delta_-(X)$\big).

\begin{proof}
The spectral sequence follows from the theory of (derived) exact couples.  One can compute the $E_2$-term as in the above statement by using Proposition \ref{prop:Gersten}:
\[
H_{p}\left(B,H^\delta_{q}(f_*\E_X^!)\right) \simeq A_{p}^\delta\left(B,H^\delta_{q}(f_*\E_X^!)\right).
\]
Going back to the definition, this is the $p$-th homology of the following Gersten $\delta$-homological complex:
\[
\hdots \rightarrow \bigoplus_{x \in B_{(p)}} \E_{p+q}(x/B,f_*\E_X^!)
 \xrightarrow{\ d_p^G\ } \bigoplus_{s \in B_{(p-1)}} \E_{p+q-1}(s/B,f_*\E_X^!)
\rightarrow \hdots
\]
Given any point $x \in B^{(p)}$ one considers the cartesian square:
\[
\xymatrix@=10pt{
X_x\ar^{i_x}[r]\ar_{f_x}[d] & X\ar^f[d] \\
\spec(\kappa_x)\ar^-x[r] & B
}
\]
The computation:
\[
\E_{*}(x/B,f_*\E_X^!)
=[x_!\un_B[*],f_*\E_X^!]=[\un_x[*],x^!f_*\E_X^!]
=[\un_x[*],f_{x*}i_x^!\E_X^!]=[\un_x[*],i_x^!\pi^!\E]=\E_{*}(X_x/S)
\]
yields the $E_2$-term in the form of the statement and implies that the spectral sequence converges.
\end{proof}

\begin{df}
\label{df:Leray_ssp}
The above spectral sequence will be called the \emph{(homological) homotopy Leray spectral sequence} associated with $f$
 and with coefficients in $\E$.
\end{df}

The spectral sequence is obviously functorial in the $\T$-spectrum $\E$ over $S$.  If we want to analyze the behavior with respect to Tate twists, we can look at the spectral sequence with coefficients in $\E(n)$, which takes the following form:
\[
E^2_{p,q}=A_p^\delta\big(B,H^\delta_{q}(f_*\E_X^!)(n))\big)
 \Rightarrow \E_{p+q,n}(X/S).
\]
Similarly, twists by more general Thom spaces man be considered.  The following result summarizes the formal properties of the spectral sequence.

\begin{prop}
\label{prop:formalpropertieshomologicallerayss}
We consider the following diagram of schemes:
\[
\xymatrix@R=2pt@C=30pt{
& Y\ar_{\pi'}[ld]\ar^{f'}[rd]\ar^p[dd] & \\
S & & B \\
& X\ar^{\pi}[lu]\ar_f[ru] &
}
\]
and a $\T$-spectrum $\E$ over $S$.  Set $\E_X^!=\pi^!\E$ and $\E_Y^!=\pi^{\prime!}(\E)$.  Given any point $x \in B$, let $p_x:Y_x \rightarrow X_x$ be the induced morphism on the fibers over a point $x \in B$.
\begin{itemize}
\item If $p$ is proper, then there is an adjunction map:
\[
\phi_p:f'_*\E_Y^!=f'_*\pi^{\prime!}(\E)=f_*p_!p^!\pi^!(\E)
\xrightarrow{ad(p_!,p^!)} f_*\pi^!(\E)=f_*(\E_X^!)
\]
which induces a morphism of spectral sequences converging to the indicated map on the abutment:
$$
\xymatrix@R=15pt@C=10pt{
A_p^\delta\big(B,H^\delta_{q}(f'_*\E_Y^!))\big)\ar_{\phi_{p*}}[d]
 \ar@{=>}[r] & \E_{p+q}(Y/S)\ar^{p_*}[d] \\
 A_p^\delta\big(B,H^\delta_{q}(f_*\E_X^!))\big)
 \ar@{=>}[r] & \E_{p+q}(X/S).
}
$$
Moreover, the map $\phi_{p*}$ is induced by the following morphism of Gersten complexes:
\[
\xymatrix@R=18pt@C=32pt{
\hdots\ar[r] & \bigoplus_{x \in B_{(p)}} \E_{p+q}(Y_x/S)\ar^-{d_{p,Y}^G}[r]\ar_{\sum_x p_{x*}}[d]
 & \bigoplus_{s \in B_{(p-1)}} \E_{p+q-1}(Y_s/S)\ar[r]\ar^{\sum_s p_{s*}}[d] &  \hdots \\
 \hdots\ar[r] & \bigoplus_{x \in B_{(p)}} \E_{p+q}(X_x/S)\ar^-{d_{p,X}^G}[r]
 & \bigoplus_{s \in B_{(p-1)}} \E_{p+q-1}(X_s/S)\ar[r] &  \hdots
}
\]
\item If $p$ is \'etale, then there is an adjunction map:
\[
\psi_p:f_*(\E_X^!)=f_*\pi^!(\E)
 \xrightarrow{ad(p^*,p_*)} f_*p_*p^*\pi^!(E)=f'_*\pi^{\prime!}(\E)=f'_*\E_Y^!
\]
which induces a morphism of spectral sequences, converging to the indicated map on the abutment:
$$
\xymatrix@R=15pt@C=10pt{
A_p^\delta\big(B,H^\delta_{q}(f_*\E_X^!))\big)\ar_{\psi_{p*}}[d]
 \ar@{=>}[r] & \E_{p+q}(X/S)\ar^{p^*}[d] \\
 A_p^\delta\big(B,H^\delta_{q}(f'_*\E_Y^!))\big)
 \ar@{=>}[r] & \E_{p+q}(Y/S).
}
$$
Moreover, the map $\phi_{p*}$ is induced by the following morphism of Gersten complexes:
\[
\xymatrix@R=18pt@C=32pt{
\hdots\ar[r] & \bigoplus_{x \in B_{(p)}} \E_{p+q}(X_x/S)\ar^-{d_{p,X}^G}[r]\ar_{\sum_x p_{x}^*}[d]
 & \bigoplus_{s \in B_{(p-1)}} \E_{p+q-1}(X_s/S)\ar[r]\ar^{\sum_s p_{s}^*}[d] &  \hdots \\
 \hdots\ar[r] & \bigoplus_{x \in B_{(p)}} \E_{p+q}(Y_x/S)\ar^-{d_{p,Y}^G}[r]
 & \bigoplus_{s \in B_{(p-1)}} \E_{p+q-1}(Y_s/S)\ar[r] &  \hdots
}
\]
\end{itemize}
\end{prop}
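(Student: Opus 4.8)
The plan is to construct, in each of the two cases, a morphism between the homological exact couples built in \ref{num:Leray_ssp}, and then compute its effect on the $E^2$-page and on the abutment. Since $\pi'=\pi\circ p$ and $f'=f\circ p$ one has $\E_Y^!=p^!\E_X^!$ and $f'_*=f_*\circ p_*$, so the two maps of the statement can be rewritten as
\[
\phi_p=f_*(\epsilon_p)\colon f_*p_!p^!\E_X^!\to f_*\E_X^!,\qquad \psi_p=f_*(\eta_p)\colon f_*\E_X^!\to f_*p_*p^*\E_X^!,
\]
where $\epsilon_p$ is the counit of $(p_!,p^!)$ (using $p_!=p_*$ when $p$ is proper) and $\eta_p$ the unit of $(p^*,p_*)$ (using $p^*\simeq p^!$ when $p$ is \'etale). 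Applying the truncation functors $\tau^\delta_{\geq q}$ and $\tau^\delta_{=q}$ — which are functorial, and whose defining triangles are natural — to the source and target of $\phi_p$ (resp.\ $\psi_p$) produces a morphism of the towers of $\delta$-homotopy truncations, hence of the associated systems of localization triangles, hence of the degree-$2$ homological exact couples of \ref{num:Leray_ssp}. By the general theory of exact couples this yields a morphism of the two spectral sequences, each of which converges by Theorem~\ref{thm:Leray_ssp}.

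\textbf{Effect on the abutment.}
Under the identifications $H_n(B,f_*\E_X^!)=\Hom_{\T(X)}(f^*\un_B[n],\pi^!\E)=\E_n(X/S)$ and $H_n(B,f'_*\E_Y^!)=\E_n(Y/S)$, the map induced on abutments is $H_{p+q}(B,\phi_p)$ (resp.\ $H_{p+q}(B,\psi_p)$). Rewriting the relevant $\Hom$-group through the adjunction $(p_!,p^!)$ (resp.\ $(p^*,p_*)$) and using the triangle identities, one checks that postcomposition with $\epsilon_p$ becomes precomposition with the unit $\un_X\to p_*\un_Y=p_!\un_Y$, i.e.\ the proper pushforward $p_*\colon\E_{p+q}(Y/S)\to\E_{p+q}(X/S)$; symmetrically, postcomposition with $\eta_p$ becomes the \'etale pullback $p^*\colon\E_{p+q}(X/S)\to\E_{p+q}(Y/S)$. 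This identifies the abutment maps of the two squares in the statement.

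\textbf{Effect on $E^2$ and on the Gersten complexes.}
On the $E^2$-page the morphism is $H_p(B,H_q^\delta(\phi_p))$ (resp.\ $H_p(B,H_q^\delta(\psi_p))$); by the identification $H_p(B,-)\simeq A_p^\delta(B,-)$ of Theorem~\ref{thm:Leray_ssp} and the covariant functoriality of the Gersten complex in its coefficient spectrum (Proposition~\ref{prop:Gersten}(1)), it is induced by the morphism of Gersten complexes $C_*^\delta(B,H_q^\delta(\phi_p))$. It remains to compute its component at a point $x\in B_{(p)}$. The proof of Theorem~\ref{thm:Leray_ssp} shows, for any spectrum $\cG$ over $X$, a natural isomorphism $\E_*(x/B,f_*\cG)\cong\Hom_{\T(X_x)}(\un_{X_x}[*],i_x^!\cG)$ attached to the cartesian square with corners $X_x$, $X$, $\spec(\kappa_x)$, $B$ (for $\cG=\E_X^!$ this recovers $\E_*(X_x/S)$). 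Applying this to $\cG=p_!p^!\E_X^!$ and to $\cG=\E_X^!$, and using the base-change isomorphism $i_x^!p_!p^!\cong p_{x!}p_x^!i_x^!$ attached to the cartesian square with corners $Y_x$, $Y$, $X_x$, $X$ (so that the fiberwise source becomes $\E_*(Y_x/S)$), together with the compatibility of this base-change isomorphism with the counit of $(p_!,p^!)$ — so that $i_x^!(\epsilon_p)$ corresponds to the counit $\epsilon_{p_x}$ — one sees, exactly as in the abutment computation applied now to $p_x$, that the fiberwise map is $p_{x*}\colon\E_{p+q}(Y_x/S)\to\E_{p+q}(X_x/S)$. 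This gives the displayed morphism of Gersten complexes. The \'etale case is treated identically, with the counit of $(p_!,p^!)$ replaced by the unit of $(p^*,p_*)$ and $p_{x*}$ by $p_x^*$; since the cotangent complex of the \'etale morphism $p_x$ vanishes, no Thom twist intervenes (compare the proof of Proposition~\ref{prop:Gersten}(2)).

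\textbf{Main obstacle.}
Once one grants the compatibility of the various exchange (base-change) isomorphisms of the six-functor formalism with the units and counits of the adjunctions $(p_!,p^!)$ and $(p^*,p_*)$, everything above is bookkeeping. This compatibility is the only genuinely non-formal ingredient; it is standard — it already underlies the definitions of proper pushforward and \'etale pullback on bivariant theory and the fiber computation in the proof of Theorem~\ref{thm:Leray_ssp} — and the proof consists in assembling it carefully.
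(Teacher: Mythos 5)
Your proposal is correct and follows essentially the same route as the paper's (very terse) proof: functoriality of the truncation tower/exact couple in the coefficient spectrum over $B$ gives the morphism of spectral sequences, the abutment map is identified with $p_*$ (resp.\ $p^*$) by unwinding the adjunction definition of the bivariant functoriality, and the $E_2$-identification comes from the fiberwise computation of Theorem~\ref{thm:Leray_ssp}. The only (harmless) divergence is that where the paper simply cites Proposition~\ref{prop:Gersten}(2) for the map of Gersten complexes, you redo the fiber identification directly via the base-change isomorphisms and their compatibility with the (co)units, which is a standard ingredient of the six-functor formalism and makes the argument more self-contained.
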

\begin{proof}
Each point is obtained by using the functoriality of the homotopy Leray
 spectral sequence with respect to the $\T$-spectrum $\E$. The computation of the map on the abutment
 follows from the definition of the functoriality of the bivariant theory.
 The map on the $E_2$-terms follows from point (2) of Proposition \ref{prop:Gersten}.
\end{proof}

\begin{rem}
Using fundamental classes as defined in \cite{DJK},
 and the induced functoriality on bivariant theory, one can extend the
 contravariant \'etale functoriality to smoothable lci morphisms $f:Y \rightarrow X$,
 up to considering twists by the Thom space of the cotangent complex of $f$.
 We leave the formulation to the reader.
\end{rem}

It is possible to describe the filtration on the abutment of the homotopy Leray
 spectral sequence in geometric terms. The following
 definition is the obvious generalization of the classical definition of Grothendieck
 (see \cite{BO}).
\begin{df}
Consider the setting of Paragraph \ref{num:Leray_ssp}.
 We define the $\delta$-niveau filtration on $\E_*(X/S)$ relative to $f$ as:
$$
{}^\delta N_p^f\E_*(X/S)
=\bigcup_{i:Z \rightarrow X, \delta(Z)\leq p}
 \mathrm{Im}\big(i_*:\E_*(X \times_B Z/S) \rightarrow \E_*(X/S)\big).
$$
where $i$ runs over the closed immersions with target $X$.
\end{df}

\begin{rem}
One can also describe this filtration by the following formula:
$$
{}^\delta N_p^f\E_*(X/S)
=\bigcup_{\pi:Y \rightarrow X, \delta(Y)\leq p}
 \mathrm{Im}\big(\pi_*:\E_*(X \times_B Y/S) \rightarrow \E_*(X/S)\big).
$$
where $\pi:Y \rightarrow X$ runs over the proper morphism with target $X$.
 Indeed, such a morphism always factors as
$$
Y \xrightarrow{\bar \pi} \pi(Y) \xrightarrow i X
$$
where $\pi(Y)$ denotes the image of $Y$, with its canonical structure of a closed subscheme of $X$.
 As the  map $\bar \pi$ is surjective, one has $\delta(\pi(Y)) \leq p$. This concludes.
\end{rem}

\begin{num}
Before stating the computation of the filtration on the $\delta$-homotopy spectral sequence,
 we will introduce notations in order to simplify the proof.
 We consider again the assumptions and notations of Paragraph \ref{num:Leray_ssp}.

First, note that by definition, the filtration induced by the $\delta$-homotopy spectral
 sequence on $\E_*(X/S)$ is defined as:
$$
{}^\delta F_q^f\E_*(X/S)=\mathrm{Im}\big(H_*(B,\tau_{\geq q} f_* \E_X^!) \rightarrow \E_*(X/S)\big).
$$

Second, we can define the $\delta$-niveau filtration relative to $f$ at the level of schemes,
 by considering the following ind-schemes:
$$
B_{\leq p}=\pilim{Z_* \in \flag(B)} Z_p.
$$
Therefore we get a closed ind-immersion $B_{\leq p} \xrightarrow{i_p} B$.
We can define a kind of complementary immersion by considering the following pro-objects:
$$
B_{>p}=\pplim{Z_* \in \flag(B)} (B-Z_p)
$$
together with the pro-open immersion $B_{>p} \xrightarrow{j_p} B$.

Using these notations, we can consider the localization long exact sequence
$$
\E_*(X \times_B B_{\leq p}/S) \xrightarrow{i_{p*}} \E_*(X/S)
 \xrightarrow{j^*_p} \E_*(X \times_B B_{>p}/S) \xrightarrow{\partial_p} \E_*(X \times_B B_{\leq p}/S)
$$
where the third (resp. first, fourth) member(s) is the obvious colimit,
 using the contravariance of $\E_*(-/S)$ with respect to open immersion (resp. contravariance
 with respect to closed immersions). This long exact sequence is nothing else than the filtered
 colimit of the localisation sequences with respect to the closed immersions
 $X \times_B Z_p \rightarrow X$
 for $\delta$-flags $Z_*$ of $B$.

With these notations, the $\delta$-niveau filtration relative to $f$ simply equals the image
 of $i_{p*}$.
\end{num}
\begin{prop}
\label{prop:comparison_filt}
Consider the above assumptions and notation.
 Then for any pair of integer $(p,n) \in \ZZ^2$, one has the following relation:
$$
{}^\delta F_p^f\E_n(X/S)={}^\delta N_{n-p}^f\E_n(X/S),
$$
where the left hand-side is the filtration on the abutment of the homotopy Leray
 spectral sequence associated with $f$ and the right-hand side is the $\delta$-niveau
 filtration relative to $f$.
\end{prop}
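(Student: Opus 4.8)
The plan is to identify both filtrations with the images of the pushforward maps coming from the closed ind-immersion $i_p: B_{\leq p} \to B$ (suitably indexed), relying on the fact that the $\delta$-niveau filtration relative to $f$ is by construction the image of $i_{p*}: \E_*(X\times_B B_{\leq p}/S) \to \E_*(X/S)$. First I would unwind the definition of the abutment filtration ${}^\delta F_q^f\E_*(X/S)$: it is the image of $H_*(B,\tau^\delta_{\geq q}f_*\E_X^!) \to H_*(B,f_*\E_X^!) = \E_*(X/S)$. The key is then to relate the truncation $\tau^\delta_{\geq q}f_*\E_X^!$ to the objects supported on $B_{\leq p}$. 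Concretely, I would use the $\delta$-niveau spectral sequence for the $\T(B)$-spectrum $\cF := f_*\E_X^!$ together with the localization long exact sequences
\[
\E_*(X\times_B B_{\leq p}/S) \xrightarrow{i_{p*}} \E_*(X/S) \xrightarrow{j_p^*} \E_*(X\times_B B_{>p}/S) \xrightarrow{\partial_p} \cdots
\]
recorded in the paragraph preceding the statement, noting that $\E_*(X\times_B B_{\leq p}/S) = H_*(B_{\leq p}, i_p^!\cF)$ and $\E_*(X\times_B B_{>p}/S) = H_*(B_{>p}, j_p^*\cF)$ by base change, exactly as in the computation carried out in the proof of Theorem~\ref{thm:Leray_ssp}.

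The heart of the argument is the comparison, at the level of $B$, between the Postnikov-type filtration coming from the $t_\delta$-truncations $\tau^\delta_{\geq q}\cF$ and the geometric filtration coming from $i_{p*}H_*(B_{\leq p}, i_p^!\cF)$. I would establish this by showing that for a $\T(B)$-spectrum $\cF$, the image of $H_n(B,\tau^\delta_{\geq q}\cF) \to H_n(B,\cF)$ coincides with $\bigcup_{\delta(Z)\leq n-q} \mathrm{Im}(H_n(Z, i_Z^!\cF) \to H_n(B,\cF))$, the union over reduced closed subschemes (equivalently over $\delta$-flags). One inclusion is straightforward: any class in $H_n(Z,i_Z^!\cF)$ with $\delta(Z) = p$ maps into $H_n(B,\tau^\delta_{\geq n-p}\cF)$ because $i_{Z*}i_Z^!\cF$ is $t_\delta$-non-negative after the appropriate shift — more precisely $i_{Z*}i_Z^!\cF \in \T(B)_{\geq \delta_-(Z) - \delta_-(B)}$-type estimates combined with the fact that a class in degree $n$ factoring through a scheme of $\delta$-dimension $\leq p$ automatically lands in truncation degree $\geq n-p$; this is precisely the content, read through the $E_1$-term computation $\,{}^\delta E^1_{p,q} = \oplus_{x\in B_{(p)}}\E_{p+q}(x/\cdots)$, that the niveau filtration is finer than the one induced by the $\delta$-niveau spectral sequence. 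The reverse inclusion uses the convergence and functoriality of the $\delta$-niveau spectral sequence of $\cF$ itself: a class killed in $H_n(B,\tau^\delta_{<q}\cF)$ is supported, up to the niveau filtration, on the locus where the relevant homology is concentrated in degrees $\geq q$, which by the $E_1$-description lives on a closed subscheme of $\delta$-dimension $\leq n-q$.

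Assembling these, with $p$ in the proposition playing the role of the truncation index $q$ and $n-p$ the $\delta$-dimension bound, yields
\[
{}^\delta F_p^f\E_n(X/S) = \mathrm{Im}\big(H_n(B,\tau^\delta_{\geq p}f_*\E_X^!) \to \E_n(X/S)\big) = \bigcup_{\delta(Z)\leq n-p} \mathrm{Im}\big(i_{Z*}\big) = {}^\delta N_{n-p}^f\E_n(X/S),
\]
where in the last step one uses the identification $H_n(Z, i_Z^!f_*\E_X^!) = \E_n(X\times_B Z/S)$ from proper base change exactly as in Theorem~\ref{thm:Leray_ssp}, and passes to the colimit over $\delta$-flags to match the ind-scheme description $B_{\leq p}$. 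The main obstacle I anticipate is making the first displayed identification in this paragraph fully rigorous: one must check that filtering by $\tau^\delta_{\geq \bullet}\cF$ in $\T(B)$ and then applying $H_*(B,-)$ produces exactly the union over closed subschemes, rather than something a priori only comparable in one direction. The cleanest route is to compare the two exact couples — the one from the Postnikov tower of $\cF$ (whose associated graded is $H_*(B, H_q^\delta\cF[q])$) and the $\delta$-niveau exact couple of $\cF$ — and invoke that both converge, with the $E_2$-page of the former being the homology of the Gersten complex of the latter (Theorem~\ref{thm:Leray_ssp} and Proposition~\ref{prop:Gersten}); a standard comparison-of-filtrations argument in the style of Bloch–Ogus/Deligne's décalage then forces the abutment filtrations to agree after the reindexing $p \leftrightarrow n-p$. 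Care is needed with the boundedness hypotheses ensuring convergence (the $E_2$-page is concentrated in columns $[\delta_-(X),\delta_+(X)]$), but these are already in force under the standing assumptions.
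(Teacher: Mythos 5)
Your core argument is essentially the paper's: after rewriting both filtrations for $\cF=f_*\E_X^!$ over $B$ (so that one may as well take $X=B=S$), the comparison comes down to the localization sequence for the ind-closed subscheme $B_{\leq p}$ and its pro-open complement $B_{>p}$, together with two vanishing bounds coming from the $t_\delta$-characterization of Paragraph~\ref{num:homological_delta_ref}. The paper isolates exactly these two bounds as a lemma --- $H_n\bigl(B_{\leq p},\tau^\delta_{<q}\cF\bigr)=0$ for $n\geq p+q$ and $H_n\bigl(B_{>p},\tau^\delta_{\geq q}\cF\bigr)=0$ for $n\leq p+q$ --- and concludes by a short diagram chase showing the two images agree when $n=p+q$. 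Your first inclusion is this first vanishing in slightly garbled form, and your ``reverse inclusion'' is morally the second one, but the phrase ``supported on the locus where the relevant homology is concentrated in degrees $\geq q$'' should be replaced by the precise statement that $H_n(B_{>p},\tau^\delta_{\geq q}\cF)$ vanishes for $n\leq p+q$, so that the localization sequence forces any class of $H_n(B,\tau^\delta_{\geq q}\cF)$ to lift to $H_n(B_{\leq n-q},\tau^\delta_{\geq q}\cF)$; no appeal to the $E_1$-page of the niveau spectral sequence is needed.

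One caution about your proposed fallback: the ``cleanest route'' of comparing the Postnikov exact couple with the $\delta$-niveau exact couple and invoking a Bloch--Ogus/d\'ecalage-style argument does not close the gap as stated. Convergence plus an isomorphism of $E_2$-pages does not by itself identify the abutment filtrations; one needs an actual morphism of spectral sequences (or filtered objects) inducing the identity on the abutment, and constructing such a comparison is precisely the harder statement the paper explicitly says it does not need and defers to future work (see the remark following the proposition). So the rigorous completion of your argument is the direct one you sketch first --- the two vanishing statements from Paragraph~\ref{num:homological_delta_ref} and the diagram chase --- not the spectral-sequence comparison.
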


\begin{proof}
Let us put $\E'=f_*\E_X^!=f_*\pi^!\E$. We want to compare the following filtrations:
\begin{align*}
{}^\delta F_q^f\E_*(X/S)
 &=\mathrm{Im}\big(H_*(B,\tau_{\geq q} \E') \rightarrow H_*(B,\E')\big), \\
{}^\delta N_p^f\E_*(X/S)
 & =\mathrm{Im}\big(H_*(B_{\leq p},\E') \rightarrow H_*(B,\E')\big).
\end{align*}
So, reasoning with $\E'$ instead of $\E$,
 we reduce to the case $X=B=S$, $f=Id_X$, $\pi=Id_X$.

We will start with the following lemma:
\begin{lm}
Let $\E$ be a $\T$-spectrum over a scheme $X$. Then one has the following vanishing:
\begin{align*}
H_n\big(X_{\leq p},\tau^\delta_{<q}\E\big)=0 & \text{ if } n \geq p+q, \\
H_n\big(X_{>p},\tau^\delta_{\geq q}\E\big)=0 & \text{ if } n \leq p+q.
\end{align*}
\end{lm}
This follows from the case $X=S$, $m=q$, $i=0$ of the equivalent conditions of Paragraph \ref{num:homological_delta_ref}.

We can now consider the following commutative diagram, whose rows and columns
 are exact sequences:
$$
\xymatrix{
& H_{n+1}\big(X_{\leq p},\tau^\delta_{<q}\E\big)\ar[d] & & \\
H_{n+1}\big(X_{>p},\tau^\delta_{\geq q}\E\big)\ar[r]
 & H_{n}\big(X_{\leq p},\tau^\delta_{\geq q}\E\big)\ar_a[d]\ar^b[r]
 & H_{n}\big(X,\tau^\delta_{\geq q}\E\big)\ar^c[d]\ar[r]
 & H_{n}\big(X_{>p},\tau^\delta_{\geq q}\E\big) \\
& H_{n}\big(X_{\leq p},\E\big)\ar[d]\ar_d[r]
 & H_{n}\big(X,\E\big)
 & \\
& H_{n}\big(X_{\leq p}\tau^\delta_{<q},\E\big) & & \\
}
$$
According to the preceding lemma, one gets that:
\begin{itemize}
\item $a$ is an isomorphism if $n \geq p+q$.
\item $b$ is an isomorphism if $n \leq p+q-1$, and an epimorphism if $n=p+q$.
\end{itemize}
Therefore, we obtain that $\mathrm{Im}(c)=\mathrm{Im}(d)$ if $n=p+q$. This concludes.
\end{proof}

\begin{ex}
The preceding proposition recovers the conjecture of Washnitzer proved by Bloch and Ogus
 in \cite[6.9]{BO}. This is obtained as follows:
\begin{itemize}
\item $S$ is the spectrum of a perfect field $k$, $X$ is smooth over $k$;
\item $\E=\E_{dR}$ is the (ring) spectrum representing De Rham cohomology as in \cite[\textsection 3.1]{CD2}
 while $\T=\DM(-,\QQ)$.
\end{itemize}
The fact the truncation of $\E_{dR}$ for the homotopy $t$-structure agrees with the truncation
 of the De Rham complex follows from the construction of $\E_{dR}$ (\cite[3.1.5]{CD2}).
\end{ex}

\begin{rem}
The proof of Bloch and Ogus is less precise and more theoretical. It consists in proving that
the niveau spectral sequence for De Rham cohomology agrees with the hypercohomology spectral sequence
associated with the De Rham complex.

We do not need such a comparison to prove our result, and our proof is more direct.
 But however, let us indicate that there is also an underlying comparison of spectral sequences.
 In fact, it is possible to prove that the homotopy Leray spectral sequence defined above agrees
 from $E_2$-on with the $\delta$-niveau spectral sequence \ref{df:delta_niveau} of $X/S$ with
 coefficients in the spectrum $f_*(\E)$.
The case where $X=B$, $f=Id_B$, $S$ is the spectrum of a perfect field was proved in \cite{Bon1}
 and \cite{Deg13}. The general case will be treated in future work.
\end{rem}

\subsection{The cohomological version}
\label{sec:coh_version}

\begin{num}
\label{num:spectral_diagrams}
Let us consider again the assumptions of Paragraph \ref{num:Leray_ssp}, but with $X=S$.  So we fix a morphism of schemes:
\[
f:X \rightarrow B
\]
and a $\T$-spectrum $\E$ over $X$.

To get products on the $\delta$-homotopy spectral sequence of $(f,\E)$, we will use a classical construction of Douady (see \cite[Section II]{Douady}) which uses the theory of \emph{spectral diagrams} from \cite[XV.7]{CE} rather than that of exact couples.

Here is how one gets such a spectral diagram underlying the spectral sequence \eqref{eq:cohomological_E1_Leray}.
 We first define the following tautological functors in a $\T$-spectrum $\F$ over $B$ (in the end $\F$ plays the role of $f_*\E$):
\begin{align*}
\tau^\delta_{\geq -\infty}(\F)=\F, \ &\tau^\delta_{\geq +\infty}(\F)=0, \\
\tau^\delta_{<-\infty}(\F)=0, \ &\tau^\delta_{<+\infty}(\F)=\F.
\end{align*}
Consider the following poset:
\[
\mathcal P=\{ (p,q) \mid p,q \in \ZZ \cup \{\pm \infty\}, p \leq q\}.
\]
such that  $(p,q) \leq (p',q')$ when $p \leq p'$ and $q \leq q'$.  Then for any $(p,q) \in \mathcal P$, we put:
\[
\tau^\delta_{[p,q[}(\F):=\tau^\delta_{\geq p}\tau^\delta_{<q}(\F).
\]
This defines a contravariant functor
\[
\tau^\delta(\F):\mathcal P \rightarrow \T(X),
 (p,q) \mapsto \tau^\delta_{[p,q[}(\F).
\]

Using the truncation triangles associated with the $\delta$-homotopy $t$-structure, we obtain distinguished triangles for $p \leq q \leq r$:
$$
\tau^\delta_{[q,r[}(\F)
 \rightarrow \tau^\delta_{[p,r[}(\F)
 \rightarrow \tau^\delta_{[p,q[}(\F)
 \xrightarrow{\partial} \tau^\delta_{[q,r[}(\F)[1]
$$
where the first two maps are given by the functoriality of $\tau^\delta$.
These formulas imply that the contravariant functor:
$$
\mathcal P \mapsto \mathscr Ab, (p,q) \mapsto H_*\big(B,\tau^\delta_{[p,q[}(f_*\E)\big)
$$
defines a $\ZZ$-graded spectral diagram in the sense of \emph{loc. cit.}.\footnote{Property (SP.5) of \emph{loc. cit.} is not immediate.  It follows from the convergence of the homotopy Leray spectral sequence
 --- see Theorem \ref{thm:Leray_ssp}.}  Again, this construction is obviously functorial in $\E$.

Note that for $\F=f_*(\E)$, the exact couple of Paragraph \ref{num:Leray_ssp} is contained in the preceding spectral
 diagram --- it corresponds to pairs $(p,p+1)$. Therefore, the spectral sequence associated with the preceding spectral diagram in \cite{CE} coincides with the $\delta$-homotopy Leray spectral sequence of $(f,\E)$.
In particular, we get the following formula:
$$
E^r_{p,q}(f,\E)
 =\mathrm{Im}\Big(H_{p+q}\big(B,\tau^\delta_{[p-r,p[}(f_*\E)\big)
 \rightarrow H_{p+q}\big(B,\tau^\delta_{[p-1,p+r-1[}(f_*\E)\big) \Big).
$$
 \end{num}

\begin{rem}
After the seminal work of Cartan and Eilenberg, spectral diagrams have appeared in other forms in the literature. In the triangulated context, one can refer, mainly for historical purposes, to \cite{Verdier}. The $\infty$-categorical
 context is much more recent, but also much more powerful and satisfactory.  The notion of spectral diagram in the $\infty$-categorical context is introduced by Lurie in \cite[Section 1.2.2]{LurieHA}, under the name of $\ZZ$-complex.
 The advantage of using $\infty$-category with pushouts (eg: stable $\infty$-categories) is that a $\ZZ$-complex is essentially equivalent to a tower of objects: see \cite[Lemma 1.2.2.4]{LurieHA}.  The fact that we can stay in the old-fashioned world of triangulated categories stems from the good behavior of $t$-structures.
\end{rem}

To obtain products on the homotopy Leray spectral sequence following Douady, we need to refine Proposition \ref{prop:homotopy_pairing} as follows.

\begin{prop}
\label{prop:products1}
We consider the preceding notations and assume $\delta \geq 0$.
Suppose we are given a morphism of $\T$-spectra over a scheme $B$:
\[
\mu:\F \otimes \F' \rightarrow \F''.
\]
Then for any triple of integers $(p,q,r)$ such that $r \geq 0$,
 there exists a canonical map:
$$
\tau^\delta_{[p,p+r[}(\F) \otimes \tau^\delta_{[q,q+r[}(\F')
 \xrightarrow{\varphi_{p,q,r}} \tau^\delta_{[p+q,p+q+r[}(\F'').
$$
Moreover, this pairing satisfies the formulas (SPP  1) and (SPP 2) of \cite[II.A, D\'efinition, p. 19-06]{Douady}.\footnote{(SPP1) is the functoriality of this pairing with respect to $(p,p+r)$ (resp. $(q,q+r)$) considered as an object of $\mathcal P$. (SPP2) is the Leibniz rule for the boundary map of type $\partial$.}
\end{prop}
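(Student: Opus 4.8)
The plan is to construct the pairing $\varphi_{p,q,r}$ by the same factorization argument used in Proposition~\ref{prop:homotopy_pairing}, iterated on both the lower and the upper truncation. First I would take the canonical map $\tau^\delta_{\geq p}(\F)\otimes\tau^\delta_{\geq q}(\F')\to \F\otimes\F'\xrightarrow{\mu}\F''$. Since $\delta\geq 0$, the tensor product is right $t_\delta$-exact (Paragraph~\ref{num:basic_t_exact}), so the source is in homological degrees $\geq p+q$; arguing exactly as in Proposition~\ref{prop:homotopy_pairing} the composite factors uniquely through $\tau^\delta_{\geq p+q}(\F'')$, and by uniqueness this factorization is functorial in $\F,\F'$ and compatible with the transition maps between the $\tau_{\geq -}$. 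Next I would compose with $\tau^\delta_{<q+r}$ on the target: applying $\tau^\delta_{\geq p+q}$ to the truncation triangle of $\F''$ at level $p+q+r$ and using that $\tau^\delta_{<q}(\F)$ has homological degrees $<q$ (so that $\tau^\delta_{[p,p+r[}(\F)\otimes\tau^\delta_{[q,q+r[}(\F')$ lands in degrees $<p+q+r$ by right $t$-exactness of $\otimes$ again), one sees the relevant composite to $\tau^\delta_{<p+q+r}(\tau^\delta_{\geq p+q}\F'')$ kills the map into the $\geq p+q+r$ part; hence it factors uniquely through $\tau^\delta_{[p+q,p+q+r[}(\F'')$. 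Feeding in $\tau^\delta_{[p,p+r[}(\F)=\tau^\delta_{\geq p}\tau^\delta_{<p+r}(\F)$ and likewise for $\F'$, and noting $\tau^\delta_{<p+r}(\F)\otimes\tau^\delta_{<q+r}(\F')$ maps to $\F\otimes\F'$, produces $\varphi_{p,q,r}$.

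The key technical point is that at every stage the target truncation has a $\Hom$-vanishing that forces the factorization and makes it \emph{unique}; this uniqueness is what will give all the compatibilities for free. Concretely, I would verify (SPP1), the (bi)functoriality in $(p,p+r)$ and $(q,q+r)$ as objects of $\mathcal P$: given $(p,p+r)\leq(p',p'+r')$ the two obvious ways of composing $\varphi$ with the transition maps of $\tau^\delta(\F)$ land in the same $\Hom$-group into $\tau^\delta_{[p+q,p+q+r[}(\F'')$ (or its further truncation), which has no maps from the higher-degree pieces, so both agree by the uniqueness clause. For (SPP2), the Leibniz rule for the connecting map $\partial$, I would chase the octahedron relating the truncation triangles for $[p,p+r[\subset[p,p+r+1[\supset[p+r,p+r+1[$ (and analogously for $\F'$, $\F''$): the connecting maps $\partial$ are, by construction of the spectral diagram in Paragraph~\ref{num:spectral_diagrams}, the boundary maps of these triangles, and the compatibility $\varphi\circ(\partial\otimes 1\pm 1\otimes\partial)=\partial\circ\varphi$ again reduces to equality of two maps into a truncated target with the requisite vanishing, hence to uniqueness.

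The main obstacle I anticipate is (SPP2): unlike (SPP1), it genuinely involves the \emph{triangulated} structure (an octahedral diagram, with its attendant sign bookkeeping from the suspension conventions fixed in Paragraph~\ref{num:homologicalconventions}), rather than a bare factorization. The delicate part is checking that the canonical map $\tau^\delta_{\geq p}(\F)\otimes\tau^\delta_{\geq q}(\F')\to\tau^\delta_{\geq p+q}(\F\otimes\F')$ of Proposition~\ref{prop:homotopy_pairing} is compatible with the boundary maps of the truncation triangles up to the expected sign — i.e.\ that applying $-\otimes\tau^\delta_{\geq q}(\F')$ (resp.\ $\tau^\delta_{\geq p}(\F)\otimes-$) to a truncation triangle and then projecting yields a morphism of triangles. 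Once that compatibility is in place at the level of the two-step truncations $\tau_{\geq p}$, extending it to the finite-width truncations $\tau^\delta_{[p,p+r[}$ is formal, and (SPP1)--(SPP2) follow. I would therefore isolate this sign-compatibility as a preliminary lemma, prove it by the same uniqueness-of-factorization principle applied to morphisms of triangles, and then deduce the proposition.
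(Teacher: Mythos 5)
Your overall architecture --- start from the pairing of Proposition \ref{prop:homotopy_pairing}, force the factorizations by degree estimates and $\Hom$-vanishing, and get (SPP1)--(SPP2) from uniqueness --- is the same as the paper's, but the concrete construction step contains two genuine errors. First, there is no canonical map $\tau^\delta_{<p+r}(\F)\otimes\tau^\delta_{<q+r}(\F')\to\F\otimes\F'$: with the paper's conventions the truncation triangle is $\tau^\delta_{\geq n}(\F)\to\F\to\tau^\delta_{<n}(\F)$, so $\tau^\delta_{<n}(\F)$ is a quotient of $\F$, not a subobject, and your plan of ``feeding in'' $\tau^\delta_{[p,p+r[}(\F)=\tau^\delta_{\geq p}\tau^\delta_{<p+r}(\F)$ by mapping the $<$-truncations into $\F\otimes\F'$ does not get off the ground. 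Second, the claim that $\tau^\delta_{[p,p+r[}(\F)\otimes\tau^\delta_{[q,q+r[}(\F')$ lies in homological degrees $<p+q+r$ ``by right $t_\delta$-exactness'' is unjustified and false in general: right $t$-exactness of $\otimes$ (Paragraph \ref{num:basic_t_exact}) only gives lower bounds (it preserves homologically non-negative objects) and says nothing about upper bounds --- this is precisely why the monoidal structure on the heart in Example \ref{ex:heart_monoidal} has to be defined as $\tau^\delta_{\leq 0}(\E\otimes\F)$ rather than $\E\otimes\F$ itself.

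Neither claim is needed. The paper keeps the map defined on $\tau^\delta_{\geq p}(\F)\otimes\tau^\delta_{\geq q}(\F')$, composes with the canonical projection $\tau^\delta_{\geq p+q}(\F'')\to\tau^\delta_{[p+q,p+q+r[}(\F'')$ (no condition required there), and then descends along the two cofiber sequences obtained by tensoring the truncation triangles of $\F'$ and $\F$ with the other factor: the fibers $\tau^\delta_{\geq p}(\F)\otimes\tau^\delta_{\geq q+r}(\F')$ and $\tau^\delta_{\geq p+r}(\F)\otimes\tau^\delta_{[q,q+r[}(\F')$ are in degrees $\geq p+q+r$ by right $t$-exactness, while the target $\tau^\delta_{[p+q,p+q+r[}(\F'')$ is in degrees $<p+q+r$ by definition, so the relevant composites vanish and the factorizations exist; the same estimate applied to the shifted fibers gives uniqueness. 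It is this uniqueness alone, not an octahedron or a separate sign lemma, that the paper invokes for (SPP1) and (SPP2). If you replace your ``feeding in'' step by this two-step descent, your argument becomes the paper's.
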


\begin{proof}
It is sufficient to treat the case where $\F''=\F \otimes \F'$ and $\mu$ is the identity.  As by assumption $\delta\geq 0$, the tensor product respects homologically positive objects (see the end of Paragraph \ref{num:basic_t_exact}).

According to Proposition \ref{prop:homotopy_pairing}, we get a canonical pairing:
\[
\tau_{\geq p}^\delta(\E) \otimes \tau^\delta_{\geq q}(\F)
 \rightarrow \tau^\delta_{\geq p+q}(\E \otimes \F).
\]
Let us consider the following diagram:
\[
\xymatrix@R=20pt@C=30pt{
\tau_{\geq p}^\delta(\E) \otimes \tau^\delta_{\geq q+r}(\F)\ar[r]\ar[d]\ar@{-->}[rdd]
 & \tau^\delta_{\geq p+q+r}(\E \otimes \F)\ar[d] \\
\tau_{\geq p}^\delta(\E) \otimes \tau^\delta_{\geq q}(\F)\ar[r]\ar[d]
 & \tau^\delta_{\geq p+q}(\E \otimes \F)\ar[d] \\
\tau^\delta_{\geq p}(\F) \otimes \tau^\delta_{[q,q+r[}(\F')\ar@{-->}[r]
 & \tau^\delta_{[p+q,p+q+r[}(\F \otimes \F')
}
\]
where the solid arrows form a commutative diagram. The two columns of this diagram come from distinguished triangles. First this implies that the slanted dotted arrow is zero.  Second, it implies that there exists a unique dotted horizontal arrow making the bottom square commutative.

Then we consider the following diagram:
\[
\xymatrix@R=20pt@C=30pt{
\tau_{\geq p+r}^\delta(\E) \otimes \tau^\delta_{[q,q+r[}(\F)\ar[d]\ar@{-->}^{(1)}[rd]
 &  \\
\tau_{\geq p}^\delta(\E) \otimes \tau^\delta_{[q,q+r[}(\F)\ar[r]\ar[d]
 & \tau^\delta_{[p+q,p+q+r[}(\E \otimes \F) \\
\tau^\delta_{[p,p+r[}(\F) \otimes \tau^\delta_{[q,q+r[}(\F')\ar@{-->}_{(2)}[ru]
 &
}
\]
As the tensor product is right $t_\delta$-exact, the object $\tau_{\geq p+r}^\delta(\E) \otimes \tau^\delta_{[q,q+r[}(\F)$ is in homological degree $\geq p+q+r$. By definition, the object $\tau^\delta_{[p+q,p+q+r[}(\E \otimes \F)$ is in homological degree $<p+q+r$. So the map labeled $(1)$ must be $0$.
 Therefore, the map $(2)$ must exist and gives the existence of the pairing $\varphi_{p,q,r}$.  The uniqueness of the construction then guarantees Douady's coherence properties (SPP1) and (SPP2).
\end{proof}

\begin{num}
\label{num:spectral_diagrams&products}
Granted Proposition~\ref{prop:products1}, one may now apply the construction of \cite[Th. II]{Douady}.  Going back to the setting of Paragraph \ref{num:spectral_diagrams}, and to our morphism $f:X \rightarrow B$,
 we consider a pairing of $\T$-spectra over $X$:
\[
\mu:\E \otimes \E' \rightarrow \E''.
\]
As $f_*$ is weakly monoidal (left adjoint of a monoidal functor),
 we get a pairing:
\[
\mu:f_*(\E) \otimes f_*(\E') \rightarrow f_*(\E'').
\]
Applying the preceding proposition and Douady's construction, we get a pairing of
 spectral sequences:
\[
E_r^{p,q}(f,\E) \otimes_\ZZ E_r^{s,t}(f,\E')
 \rightarrow E_r^{p+s,q+t}(f,\E'')
\]
such that the differentials $d_r$ satisfy the usual Leibniz rule.
\end{num}

Following standard usage, when considering products, we renumber the $\delta$-homological spectral sequence cohomologically, and use  our cohomological conventions (see in particular Paragraph \ref{num:products_Gesten_hlg}).  We then obtain the following result.  

\begin{thm}
\label{thm:deltahomotopylerayss}
Suppose $f:X \rightarrow B$ is a morphism of schemes, and let $\E$ be a $\T$-spectrum over $X$.  The constructions of Paragraph~\ref{num:spectral_diagrams} and \ref{num:spectral_diagrams&products} yield a convergent spectral sequence of the form:
\[
E_2^{p,q}(f,\E)=A^p_\delta(B,H^q_\delta(f_*\E)) \Rightarrow H^{p+q}(X,\E).
\]
If $\E$ admits a ring structure, then the spectral sequence is equipped with a multiplicative structure; the product on the $E_2$-term is induced by the construction of Example~\ref{ex:products_Gersten&d-homology}.
\end{thm}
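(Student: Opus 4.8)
The plan is to deduce the statement from the homological Theorem~\ref{thm:Leray_ssp} by a re-indexing, and then to graft Douady's product formalism on top. For the existence, convergence and $E_2$-description, I would run the construction of Paragraph~\ref{num:Leray_ssp} in the special case $X=S$ (so $\pi=\mathrm{id}_X$ and $\E^!_X=\E$): by Paragraph~\ref{num:spectral_diagrams} the spectral diagram $(p,q)\mapsto H_*\big(B,\tau^\delta_{[p,q[}(f_*\E)\big)$ has underlying exact couple exactly the one producing the homotopy Leray spectral sequence of Theorem~\ref{thm:Leray_ssp}. I would then re-index cohomologically, putting $E_r^{p,q}:=E^r_{-p,-q}$ (the right side denoting the homological spectral sequence of Theorem~\ref{thm:Leray_ssp}) and identifying the abutment via $\Hom_{\T(X)}(\un_X[-(p+q)],\E)=\Hom_{\T(X)}(\un_X,\E[p+q])=H^{p+q}(X,\E)$. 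Convergence and the concentration of the $E_2$-page in columns $p\in[\delta_-(B),\delta_+(B)]$ are then inherited verbatim from Theorem~\ref{thm:Leray_ssp}, as is the Gersten-complex description of the page; and the $E_2$-formula $E^2_{-p,-q}=A^\delta_{-p}\big(B,H^\delta_{-q}(f_*\E)\big)$ there becomes $A^p_\delta\big(B,H^q_\delta(f_*\E)\big)$ under the conventions $A^p_\delta=A^\delta_{-p}$, $H^q_\delta=H^\delta_{-q}$, where one uses Proposition~\ref{prop:Gersten}(3) applied to the heart object $H^q_\delta(f_*\E)$ to pass between $H_p(B,-)$ and $A^\delta_p(B,-)$.

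For the multiplicative structure, assume $\E$ carries a ring structure $\mu:\E\otimes\E\to\E$, and (as Proposition~\ref{prop:products1} requires --- compare the footnote on the dimension function in the Geometry conventions and the remark after Example~\ref{ex:heart_monoidal}) that $\delta\geq 0$. Since $f_*$ is the right adjoint of the monoidal functor $f^*$ it is lax monoidal, so $\mu$ pushes forward to a ring structure $f_*\mu:f_*\E\otimes f_*\E\to f_*\E$. I would apply Proposition~\ref{prop:products1} with $\F=\F'=\F''=f_*\E$ to produce, for $r\geq 0$, pairings
\[
\tau^\delta_{[p,p+r[}(f_*\E)\otimes\tau^\delta_{[q,q+r[}(f_*\E)\xrightarrow{\ \varphi_{p,q,r}\ }\tau^\delta_{[p+q,p+q+r[}(f_*\E)
\]
obeying Douady's axioms (SPP1) and (SPP2). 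Applying $H_*(B,-)$ together with the cup product on $B$-cohomology induced by $f_*\mu$ upgrades the spectral diagram of Paragraph~\ref{num:spectral_diagrams} to a multiplicative one, and Douady's Theorem~II (\cite[Th.~II]{Douady}) then supplies pairings $E_r^{p,q}\otimes_\ZZ E_r^{s,t}\to E_r^{p+s,q+t}$ for which $d_r$ is a derivation, i.e.\ each page is a bigraded differential graded algebra; this is precisely the content of Paragraph~\ref{num:spectral_diagrams&products}. Since the whole construction is built from $\mu$, the abutment filtration is multiplicative and the induced product on $H^*(X,\E)$ agrees with the $\E$-cohomology product.

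The remaining, and most delicate, point is to match the $E_2$-pairing just obtained with the product of Example~\ref{ex:products_Gersten&d-homology}. I would restrict $\varphi_{p,q,r}$ (any $r\geq 1$) to the heart $\hrt{\T(B)}$, where $\tau^\delta_{[p,p+r[}$ collapses to $H_0^\delta$: by Example~\ref{ex:heart_monoidal} and the monoidality of $H_0^\delta:\T(B)_{\geq_\delta 0}\to\hrt{\T(B)}$, the pairing $\varphi$ then induces on the graded homotopy module $H^*_\delta(f_*\E)$ exactly the ring structure used in Example~\ref{ex:products_Gersten&d-homology}; feeding this through the Gersten complex --- via the functoriality of Proposition~\ref{prop:Gersten}(1) and the construction of the Gersten $\delta$-cohomology product in Paragraph~\ref{num:products_Gesten_hlg} and Proposition~\ref{prop:products_Gesten_hlg} --- recovers the bigraded-ring product on $A^\bullet_\delta\big(B,H^\bullet_\delta(f_*\E)\big)$. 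I expect the genuine obstacle to be the bookkeeping needed to propagate all of these identifications compatibly through the truncation triangles and the cup product; the uniqueness clauses in Proposition~\ref{prop:homotopy_pairing} and Proposition~\ref{prop:products1} are exactly what should make this work, forcing the two a priori distinct products to coincide on the nose.
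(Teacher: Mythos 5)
Your proposal is correct and follows essentially the paper's own route: the paper likewise obtains convergence and the $E_2$-identification by specializing Theorem~\ref{thm:Leray_ssp} to $X=S$ and renumbering cohomologically, and it builds the products exactly as you do, via the lax monoidality of $f_*$, the pairings $\varphi_{p,q,r}$ of Proposition~\ref{prop:products1} (with the same implicit hypothesis $\delta\geq 0$), and Douady's theorem applied to the spectral diagram of Paragraph~\ref{num:spectral_diagrams}, with the $E_2$-product matched to Example~\ref{ex:products_Gersten&d-homology} through the heart pairing of Example~\ref{ex:heart_monoidal}. Your closing discussion of why the two $E_2$-products coincide is in fact slightly more explicit than what the paper records.
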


Before proceeding to a computation of the above $E_2$-term, let us introduce the following general construction within the six functors formalism satisfied by $\T$:
\begin{prop}
\label{prop:extension_shrick}
Let $f:X \rightarrow S$ be a separated morphism essentially of finite type.

Then there exists a pair of adjoint functors:
$$
f_!:\T(X) \rightarrow \T(S):f^!
$$
such that for any factorisation $X \xrightarrow u X_0 \xrightarrow{\bar f} S$
 where $u$ is pro-\'etale and $\bar f$ is of finite type, one has: $f^!=u^*\bar f^!$.
\end{prop}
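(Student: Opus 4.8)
The plan is to fix $f^!$ on one choice of factorization, verify that it does not depend on the choice, and then build the left adjoint $f_!$.

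First I would record the existence of a factorization: since $f$ is essentially of finite type we may write $X = \varprojlim_\lambda X_\lambda$ with each $X_\lambda/S$ of finite type and affine \'etale transition maps, so that for any index $\lambda_0$, taking $X_0 := X_{\lambda_0}$, $\bar f := (X_{\lambda_0}\to S)$ and $u :=$ the projection $X\to X_{\lambda_0}$ (a cofiltered limit of affine \'etale maps) gives a factorization of the required shape. By the standard limit arguments of EGA~IV~\S8, any $S$-morphism from $X$ to a scheme of finite type over $S$ factors through some $X_\lambda$; hence any two factorizations $X\xrightarrow{u} X_0\xrightarrow{\bar f} S$ and $X\xrightarrow{u'} X_0'\xrightarrow{\bar f'} S$ admit a \emph{common refinement}: an index $\mu$ together with $S$-morphisms $g:X_\mu\to X_0$ and $g':X_\mu\to X_0'$ satisfying $g\circ p_\mu = u$, $g'\circ p_\mu = u'$ (with $p_\mu:X\to X_\mu$ the projection, again pro-\'etale). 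So everything reduces to two points: (a) $u^*$ has a left adjoint when $u$ is pro-\'etale, and (b) $u^*\bar f^!\cong u'^*\bar f'^!$ canonically, for which by the common-refinement reduction it suffices to compare a factorization with a refinement of it.

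The key lemma, which I expect to be the main obstacle, is: \emph{if $w:X\to X_1$ is pro-\'etale, $g:X_1\to X_0$ is of finite type, and $g\circ w$ is pro-\'etale, then $g$ is \'etale on an open neighbourhood $V$ of $w(X)$, and therefore $w^*g^!\cong(g\circ w)^*$.} I would prove it as follows. For a point $x\in X$, the induced local homomorphism $\mathcal O_{X_1,w(x)}\to\mathcal O_{X,x}$ is faithfully flat (a flat local homomorphism of noetherian local rings), and $\mathcal O_{X_0,gw(x)}\to\mathcal O_{X,x}$ is flat since $g\circ w$ is; hence $\mathcal O_{X_0,gw(x)}\to\mathcal O_{X_1,w(x)}$ is flat, i.e.\ $g$ is flat at $w(x)$. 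Writing $X=\varprojlim Z_\lambda$ with $Z_\lambda\to X_1$ affine \'etale and using that the cotangent sheaf of an \'etale morphism vanishes, one gets $\Omega_{g\circ w}\cong w^*\Omega_g$, which vanishes because $g\circ w$ is formally \'etale, so faithful flatness of $\mathcal O_{X_1,w(x)}\to\mathcal O_{X,x}$ forces $\Omega_g$ to vanish near $w(X)$, i.e.\ $g$ is unramified, hence \'etale, along $w(X)$. The \'etale locus $V$ of $g$ is open, $w$ factors as $X\xrightarrow{w'} V\xrightarrow{j} X_1$ with $w'$ pro-\'etale, and then, using $j^!=j^*$ for the open immersion $j$ and $(g|_V)^!=(g|_V)^*$ for the \'etale morphism $g|_V=g\circ j$, we obtain $w^*g^! = w'^*j^*g^! = w'^*(g|_V)^! = w'^*(g|_V)^* = (g\circ w)^*$.

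Granting the lemma, point (b) is immediate: applying it to $w=p_\mu$ with $g$ (resp.\ $g'$), since $g\circ p_\mu=u$ (resp.\ $g'\circ p_\mu=u'$) is pro-\'etale, gives $p_\mu^*g^!\cong u^*$ and $p_\mu^*g'^!\cong u'^*$, whence
\[
u^*\bar f^! \cong p_\mu^*g^!\bar f^! \cong p_\mu^*(\bar f\circ g)^! = p_\mu^*(X_\mu\to S)^! = p_\mu^*(\bar f'\circ g')^! \cong p_\mu^*g'^!\bar f'^! \cong u'^*\bar f'^!
\]
canonically, since $\bar f\circ g$ and $\bar f'\circ g'$ are the same structural morphism $X_\mu\to S$. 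For point (a), writing a pro-\'etale $u=\varprojlim_\lambda u_\lambda$ with $u_\lambda$ affine \'etale, each $u_\lambda^*$ has a left adjoint $u_{\lambda\#}$, and using the continuity of $\T$ (property (T2)), which identifies $\T(X)$ with the $2$-colimit of the $\T(X_\lambda)$ and computes Hom-groups as filtered colimits, one checks that these assemble into a left adjoint $u_!$ of $u^*$ (alternatively invoke Brown representability, $\T(X)$ being compactly generated by (T1)). Finally I would set $f_! := \bar f_!\circ u_!$; the chain $\Hom(\bar f_!u_!M,A)\cong\Hom(u_!M,\bar f^!A)\cong\Hom(M,u^*\bar f^!A)$ shows $f_!\dashv f^!$, and since by (b) the right adjoint $f^!$ is independent of the factorization, so is $f_!$.
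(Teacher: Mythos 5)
Your proposal is correct, and it reaches the statement by a genuinely different route than the paper on the key independence step. The paper works entirely inside one chosen pro-object presenting $X/S$: it sets $f^!=u_i^*f_i^!$ at a stage, checks independence of the stage using only that the transition morphisms $\varphi_{ij}$ are \'etale (so $\varphi_{ij}^*=\varphi_{ij}^!$), and then dismisses general factorizations with the remark that any factorization as in the statement can be inserted into a pro-object presenting $X/S$ and that any two such pro-objects are isomorphic; the construction of the left adjoint $u_{i\sharp}$ is explicitly deferred to ``a limit argument as in Deligne's construction of $f_!$''. You instead compare an arbitrary factorization directly with a fixed presentation: by the EGA~IV~\S 8 limit results the pro-\'etale map $u$ factors through a stage $X_\mu$, and your key lemma --- a finite type morphism $g$ whose precomposition with a pro-\'etale map is again pro-\'etale is \'etale in a neighbourhood of the image, proved by descending flatness along the faithfully flat local homomorphism $\mathcal O_{X_1,w(x)}\to\mathcal O_{X,x}$ and using $w^*\Omega_g\cong\Omega_{g\circ w}=0$ plus coherence of $\Omega_g$ and openness of the flat locus --- substitutes for the paper's appeal to uniqueness of the pro-object, and handles the case where the comparison morphism between factorizations is not itself \'etale, which the paper glosses over. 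The cost is the extra local algebra; the gain is a more self-contained verification of the independence claim. (Two small points: in the lemma one should also note that $g$ is separated, which in the application follows by cancellation from separatedness of $X_\mu/S$, so that $g^!$ is defined; and the construction of $u_!$ via continuity or compact generation is sketched at essentially the same level of rigor as in the paper, which leaves the analogous details to the reader.) The final step, $f_!:=\bar f_!\circ u_!$ with the adjunction chain, coincides with the paper's definition $f_!=f_{i!}u_{i\sharp}$.
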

\begin{proof}
We write $X$ as a projective limit of a pro-$S$-scheme $(X_i)_{i \in I}$ whose transition morphisms are \'etale affine of finite type and $X_i/S$ is separated of finite type. If we choose an index $i \in I$, considering the induced factorisation
\[
X \xrightarrow{u_i} X_i \xrightarrow{f_i} S
\]
we put $f_!=f_{i!}u_{i\sharp}$, and $f^!=u_i^*f_i^!$.  This definition does not depend on the chosen index $i$. First assume we have two indexes $i, j \in I$ with a map $j \rightarrow i$. Then we get a commutative diagram:
\[
\xymatrix@R=20pt@C=30pt{
X\ar^{u_j}[r]\ar@{=}[d] & X_j\ar^{f_j}[r]\ar|{\varphi_{ij}}[d] & X\ar@{=}[d] \\
X\ar^{u_i}[r] & X_i\ar^{f_i}[r] & X
}
\]
and we get:
\[
f^!=u_i^*f_i^!=u_j^*\varphi_{ij}^*f_i^!=u_j^*\varphi_{ij}^!f_i^!=u_j^*f_j^!.
\]
In general, given two indices $i,j \in I$, there exists an third index $k \in I$ and maps $k \rightarrow i$, $k \rightarrow j$ which gives the canonical identification.

The definition does not depend on the pro-object chosen to present $X/S$.  Indeed, any two such pro-objects are isomorphic. Finally, any factorisation as in the statement of the proposition can be taken as an element of a pro-object
 presenting $X/S$.
\end{proof}

\begin{rem}
\begin{enumerate}
\item A more rigorous approach for the preceding proof will use a limit argument
as in Deligne's construction of the functor $f_!$ when $f$ is separated of finite type.
Details are left to the reader.
\item The procedure described in the previous proposition extends a classical trick used in
 \cite[2.2.12]{BBD}.
\item The extension of the pair of adjoint functors $(f_!,f^!)$ to the case where
 $f:X \rightarrow S$ is essentially of finite type immediately gives an extension
 of the definition of bivariant theories to schemes $X/S$ essentially of finite type.
 It is straightforward to check this extension, under the continuity assumption (T2)
 agrees with that defined in Paragraph \ref{num:extension}.
\end{enumerate}
\end{rem}

\begin{num}\label{num:compute_E1_coh_Leray}
We endeavor to describe the $E_2$-term of the cohomological form of the homotopy Leray spectral sequence associated with a morphism $f:X \rightarrow B$ and a $\T$-spectrum $\E$ over $X$. Recall from Theorem \ref{thm:Leray_ssp} (with $X=S$ and with cohomological conventions) that $E_2^{p,q}(f,\E)$ is the $p$-cohomology of the Gersten $\delta$-cohomological complex $C^*_\delta(B,H^q_\delta f_*\E)$. Moreover, for $p \in \ZZ$, one has:
\begin{equation}\label{eq:cohomological_E1_Leray}
C^p_\delta(B,H^q_\delta f_*\E)=\bigoplus_{x \in B^{(p)}} H^{p+q}(X_x/X,\E),
\end{equation}
where we have used the extension of the bivariant theory defined by $\E$ as explained in the preceding remark.
\end{num}



\begin{prop}
\label{prop:compute_E1_coh_Leray}
Notations and assumptions as in \ref{num:compute_E1_coh_Leray}, the following statements hold.
\begin{enumerate}
\item If $f$ is smooth and $\E=f^*\E_0$ for a $\T$-spectrum $\E_0$ over $B$, then, for any $p \in \ZZ$, one has:
    \[
    C^p_\delta(B,H^q_\delta f_*\E)=\bigoplus_{x \in B^{(p)}} H^{p+q}\big(X_x,\E_x\big)
    \]
    where $\E_x=x^!\E$.  More generally, given any point $x:\spec(K) \rightarrow B$ such that $\delta(x)=-p$, and any integer $r \in \ZZ$, one has (recall Definition \ref{df:delta-hlg} and Paragraph \ref{num:products_Gesten_hlg}):
    \[
    \hat H^{q}_\delta f_*\E_r(x)=H^{p+q}\big(X_x,\E_x\gtw{p+r}\big),
    \]
    again with $X_x=x^{-1} X$ and $E_x=x^! \E$.
\item If $X$ is regular, the fibers of $f$ are regular and $\E$ is $X$-pure (Definition \ref{df:purity}), then for any $p \in \ZZ$, one has:
    \[
    C^p_\delta(B,H^q_\delta f_*\E)
    =\bigoplus_{x \in B^{(p)}} H^{p+q}\big(X_x,\E\dtw{-N(X_x/X)}\big)
    \]
    where $N(X_x/X)$ is the normal bundle of the regular closed immersion $i_x:X_x \rightarrow X$ and we set $\E\dtw{-N(X_x/X)}=i_x^!\E \otimes \Th(-N(X_x/X))$.
\end{enumerate}
\end{prop}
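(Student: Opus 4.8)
The starting point is the identification \eqref{eq:cohomological_E1_Leray} proved just above, which already writes $C^p_\delta(B,H^q_\delta f_*\E)$ as $\bigoplus_{x\in B^{(p)}}H^{p+q}(X_x/X,\E)$; so the plan is to rewrite, for each point $x$ with $\delta(x)=-p$, the single bivariant term $H^{p+q}(X_x/X,\E)$ attached to the fibre $X_x$ of $f$, and the computation runs exactly parallel to the end of the proof of Theorem~\ref{thm:Leray_ssp}. Forming the cartesian square with $i_x\colon X_x\to X$ and $f_x\colon X_x\to\spec(\kappa_x)$, I would use the base change isomorphism $x^!f_*\simeq f_{x*}i_x^!$ together with $f_x^*\un_{\spec(\kappa_x)}=\un_{X_x}$ to reduce, in both cases, to identifying the object $i_x^!\E\in\T(X_x)$; once this object is known the stated formulas, including the $\GG$-graded refinement, drop out by unravelling the twists through Definition~\ref{df:delta-hlg} (this is where $\delta(x)=-p$ enters, producing the $\gtw{p+r}$ shifts).

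For (1), the hypothesis that $f$ is smooth gives relative purity: the purity transformation $\mathfrak p_f\colon f^*(-)\otimes\Th(L_f)\to f^!(-)$ is an isomorphism, $\Th(L_f)$ (with $L_f=\Omega_f$ a vector bundle) is $\otimes$-invertible with inverse $\Th(-L_f)$, and the same holds for the base-changed smooth morphism $f_x$. Since $\E=f^*\E_0$, I would write $f^*\E_0\simeq f^!\E_0\otimes\Th(-L_f)$, push $i_x^!$ inside using $i_x^!(-\otimes L)\simeq i_x^!(-)\otimes i_x^*L$ for $L$ invertible together with $i_x^*L_f\simeq L_{f_x}$ and $(fi_x)^!=(xf_x)^!=f_x^!x^!$, and finally apply relative purity for $f_x$; equivalently, the exchange transformation relating $i_x^!f^*$ and $f_x^*x^!$ is invertible because $f$ is smooth. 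This gives $i_x^!\E\simeq f_x^!x^!\E_0\otimes\Th(-L_{f_x})\simeq f_x^*(x^!\E_0)=f_x^*\E_x$. Substituting into the bivariant term turns $H^{p+q}(X_x/X,\E)$ into $H^{p+q}(X_x,\E_x)$ and, keeping the $\GG$-grading, into $\hat H^q_\delta f_*\E_r(x)\simeq\Hom_{\T(X_x)}(\un_{X_x},f_x^*(x^!\E_0)\gtw{p+r}[p+q])=H^{p+q}(X_x,\E_x\gtw{p+r})$, which is the ``more generally'' statement.

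For (2), the hypotheses (with $X_x$ and $X$ regular) make $i_x$ a regular closed immersion, hence a quasi-projective morphism to which the $X$-purity assumption on $\E$ applies; thus $\mathfrak p_{i_x}\colon i_x^*\E\otimes\Th(L_{i_x})\xrightarrow{\ \sim\ }i_x^!\E$ is an isomorphism (equivalently, one may invoke the duality isomorphism of Proposition~\ref{prop:duality} for $i_x$). For a regular closed immersion the cotangent complex $L_{i_x}$ is the conormal sheaf placed in degree one, and its Thom space is identified, in the conventions of \cite{DJK}, with $\Th(-N(X_x/X))$; hence $i_x^!\E\simeq i_x^*\E\otimes\Th(-N(X_x/X))$, which is the object denoted $\E\dtw{-N(X_x/X)}$. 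Feeding this into the bivariant term rewrites $H^{p+q}(X_x/X,\E)$ as $H^{p+q}(X_x,\E\dtw{-N(X_x/X)})$, as asserted.

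The one genuinely delicate point is that when $x$ is not a closed point of $B$ the morphism $i_x$ is not literally a (regular) closed immersion: $\spec(\kappa_x)\to B$ factors as a localisation (pro-open immersion) of the reduced closure $\overline{\{x\}}$ followed by a closed immersion, so $i_x$ is a pro-open immersion into $X\times_B\overline{\{x\}}$ followed by a closed immersion. One therefore has to run the above identifications of $i_x^!\E$ through the essentially-finite-type extension of the six operations of Proposition~\ref{prop:extension_shrick}, using openness of the regular locus to localise $\overline{\{x\}}$ so that $i_x$ becomes a regular (locally closed) immersion, and the continuity assumption (T2) to commute $i_x^!$, the relevant Thom twists and the purity transformations $\mathfrak p_f$, $\mathfrak p_{i_x}$ past the projective limits involved. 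I expect this---rather than the twist bookkeeping, which is routine given \cite{DJK} and Definition~\ref{df:delta-hlg}---to be the main obstacle; in the non-oriented setting one must in addition keep track, via the remarks following Proposition~\ref{prop:duality}, of whether the output is an honestly $\GG$-graded cohomology group or a virtual-bundle-twisted one.
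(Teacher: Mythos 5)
Your proposal is correct and follows essentially the same route as the paper: point (1) rests on the invertibility of the exchange map $i_x^!f^*\simeq f_x^*x^!$ for $f$ smooth (which the paper invokes directly and you re-derive from relative purity) followed by the $(f_{x\sharp},f_x^*)$ adjunction, and point (2) is exactly the purity/duality isomorphism of Proposition~\ref{prop:duality} applied to the regular immersion $i_x$. The ``delicate point'' you flag about non-closed points is already absorbed in the paper by the extension of the shriek functors in Proposition~\ref{prop:extension_shrick} and by the fact that Proposition~\ref{prop:duality} is stated for essentially quasi-projective morphisms, so no further limit argument is needed.
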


\begin{proof}
Consider the first point. We directly prove the assertion concerning a general point $x:\spec K \rightarrow B$, as it implies the remaining assertion.\footnote{To be clear, recall:
\[
H^{p+q}(X_x/X,\E)=(\hat H^{q}_\delta f_*\E)_{-p}(x).
\]
}
As in the proof of Theorem \ref{thm:Leray_ssp}, consider the following cartesian diagram:
\[
\xymatrix@=18pt{
X_x\ar^{i_x}[r]\ar_{f_x}[d] & X\ar^f[d] \\
\spec(K)\ar^-x[r] & B
}
\]
Since $f$ is smooth, there is a canonical isomorphism of functors: $i_x^!f^*=f_x^*x^!$ (using the notation introduced just before the statement of the proposition for the shriek functors). The following computation then concludes the proof:
\begin{align*}
H^{p+q}(X_x/X,\E)&=\big[\un_{X_x},i_x^!f^*(\E)[p+q]\big]
 =\big[\un_{X_x},f_x^*x^!(\E)[p+q]\big] \\
 &=\big[f_{x\sharp}(\un_x),\E_x[p+q]\big]=H^{p+q}\big(X_x,\E_x\big).
\end{align*}

The second point is a direct consequence of the form \eqref{eq:cohomological_E1_Leray} of the Gersten complex and of Proposition \ref{prop:duality}.
\end{proof}

Again, this spectral sequence has good functoriality properties (e.g., in the (ring) $\T$-spectrum $E$).  The next result summarizes other functoriality properties.

\begin{prop}
\label{prop:coh_leray_pullback}
Consider a commutative diagram:
\[
\xymatrix@=10pt{
Y\ar^q[rr]\ar_g[dr] && X\ar^f[ld] \\
 & B & \\
}
\]
and a ring $\T$-spectrum $\E$. Let us put $\E_Y=q^*\E$.  There exists a morphism of converging spectral sequences:
\[
\xymatrix@=12pt{
E_2^{p,q}(f,\E)=A^p_\delta(B,H^q_\delta(f_*\E))\ar@{=>}[r]\ar_{p^*}[d] & H^{p+q}(X,\E)\ar^{q^*}[d] \\
E_2^{p,q}(f',\E')=A^p_\delta(B,H^q_\delta(g_*\E'))\ar@{=>}[r] & H^{p+q}(Y,\E_Y) \\
}
\]
where $q^*$ is the usual pullback on cohomology.
\end{prop}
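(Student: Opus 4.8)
The plan is to realise the desired morphism of spectral sequences as the image, under the Cartan--Eilenberg construction recalled in Paragraph~\ref{num:spectral_diagrams}, of a morphism of the underlying spectral diagrams in $\T(B)$, and then to identify the induced map on abutments with $q^*$ by a short adjunction chase.

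First I would record that, the triangle being commutative, $g=f\circ q$, so that $g_*\E_Y=g_*q^*\E=f_*q_*q^*\E$. The unit $\eta_{\E}\colon\E\to q_*q^*\E=q_*\E_Y$ of the adjunction $(q^*,q_*)$ in $\T(X)$ yields, after applying $f_*$, a canonical morphism
\[
\theta\colon f_*\E\longrightarrow g_*\E_Y
\]
in $\T(B)$. By functoriality of the $\delta$-homotopy truncation functors $\tau^\delta_{[p,q[}$, this $\theta$ induces a morphism between the contravariant functors $\tau^\delta(f_*\E)$ and $\tau^\delta(g_*\E_Y)$ on $\mathcal P$ that is compatible with the distinguished triangles of Paragraph~\ref{num:spectral_diagrams}; applying $H_*(B,-)$ then produces a morphism of the associated $\ZZ$-graded spectral diagrams, hence---the Cartan--Eilenberg construction being functorial---a morphism of the homotopy Leray spectral sequences of $(f,\E)$ and $(g,\E_Y)$. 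On the $E_2$-page, combining Theorem~\ref{thm:deltahomotopylerayss} with the covariant functoriality of Gersten cohomology in its coefficients (Proposition~\ref{prop:Gersten}(1)), this morphism is $A^p_\delta\big(B,H^q_\delta(\theta)\big)$, where $H^q_\delta(\theta)\colon H^q_\delta(f_*\E)\to H^q_\delta(g_*\E_Y)$ is the induced morphism of homotopy modules over $B$ (this is the left-hand vertical map in the statement).

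Next I would identify the map induced on abutments. Since $f^*\un_B=\un_X$, the $(f^*,f_*)$-adjunction furnishes the canonical identification $H^n(X,\E)=\Hom_{\T(X)}(\un_X,\E[n])\cong\Hom_{\T(B)}(\un_B,f_*\E[n])=H_{-n}(B,f_*\E)$, and likewise $H^n(Y,\E_Y)\cong H_{-n}(B,g_*\E_Y)$ via $(g^*,g_*)$; these are precisely the identifications through which the two spectral sequences abut (Paragraph~\ref{num:Leray_ssp}). Under them, the map $H_*(B,f_*\E)\to H_*(B,g_*\E_Y)$ induced by $\theta=f_*(\eta_{\E})$ carries the class of $a\colon\un_X\to\E[n]$ to that of $\un_X\xrightarrow{a}\E[n]\xrightarrow{\eta_{\E}}q_*\E_Y[n]$; by naturality of the unit $\eta$ and of the adjunction isomorphisms, the latter corresponds, under $\Hom_{\T(Y)}(\un_Y,\E_Y[n])\cong\Hom_{\T(X)}(\un_X,q_*\E_Y[n])$, to $q^*(a)\colon\un_Y=q^*\un_X\to q^*\E[n]=\E_Y[n]$. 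Hence the induced map on abutments is the pullback $q^*$, and since both spectral sequences converge (Theorem~\ref{thm:Leray_ssp}) the morphism of spectral sequences converges to it, which is the assertion.

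The point requiring care is purely formal: one must check that the abstractly defined map $\theta_*=f_*(\eta_{\E})_*$ on $\Hom_{\T(B)}(\un_B[*],-)$ agrees, after the adjunction identifications above, with the geometrically defined pullback $q^*$ on $\E$-cohomology---that is, that the naturality squares for the units are invoked in the right direction. There is no geometric content beyond the six-functor formalism; the functoriality of the truncation functors, of the Cartan--Eilenberg construction, and of $A^*_\delta(B,-)$, as well as the convergence, are all already available. I would also remark that when $\delta\geq 0$ and $\E$ is a ring $\T$-spectrum, $\theta$ is a morphism of ring spectra (the unit of a monoidal adjunction, followed by the lax monoidal functor $f_*$), so in that case the morphism of spectral sequences is one of multiplicative spectral sequences.
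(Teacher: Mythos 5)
Your proposal is correct and follows essentially the same route as the paper: the paper's (one-line) proof also obtains the morphism of spectral sequences from functoriality in the coefficient spectrum applied to the unit map $\E \rightarrow q_*q^*\E = q_*\E_Y$, using $g_*\E_Y = f_*q_*\E_Y$. Your additional verifications (identification of the abutment map with $q^*$ via the adjunction units, the $E_2$-description, and the multiplicativity remark) simply spell out details the paper leaves implicit.
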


The morphism of spectral sequences is simply obtained using the functoriality in $\E$ with respect to the following map:
\[
\E \rightarrow q_*q^*(\E)=q_*\E_Y.
\]

\subsection{Remarkable properties of homotopy modules}
\label{subsec:simplicity}

\begin{num}
\label{num:deltaeffectivity}
\textit{$\delta$-effectivity}.--
Consider the situation of Paragraph~\ref{num:Leray_ssp}: $\E$ is a $\T$-spectrum over $S$ and one looks at morphisms:
\[
\xymatrix@=15pt{
X\ar^f[rr]\ar_\pi[rd] && B \\
& S &
}
\]
where $\pi$ is separated essentially of finite type, and put $\E_X^!=\pi^!\E$. We assume in addition:
\begin{enumerate}
\item $\E_X^!$ and $\un_B$ are $\delta$-effective (Definition \ref{df:delta-eff}).
\item $f$ is proper.
\end{enumerate}
Then the first and second assumptions imply $f_*(\E_X^!)=f_!(\E_X^!)$ is $\delta$-effective
 (see Paragraph \ref{num:delta-effective}). Moreover, one can compute the $E_2$-term of the homotopy Leray spectral sequence as follows:
\begin{align*}
\Hom_{\T(B)}(\un_B[p],H_q^\delta f_*\E)
&=\Hom_{\T(B)}(s\un_B[p],H_q^\delta s f_*\E) \\
&=\Hom_{\T^{\delta\eff}(B)}(\un_B[p],w H_q^\delta s f_*\E) \\
&=\Hom_{\T^{\delta\eff}(B)}(\un_B[p],H_q^{\delta\eff} w s f_*\E) \\
&=\Hom_{\T^{\delta\eff}(B)}(\un_B[p],H_q^{\delta\eff} f_*\E).
\end{align*}
The first identification uses the assumption (1), the second the adjunction $(s,w)$, the third the fact $w$ is $t_\delta$-exact and the last one the fact $s$ is fully faithful.

We have obtained the following remarkable result.
\end{num}

\begin{prop}
With the assumptions of \ref{num:deltaeffectivity} in place, the homotopy Leray spectral sequence takes the following form:
\[
E^2_{p,q}=A_p^\delta(B,H_q^{\delta\eff}f_*\E_X^!) \Rightarrow \E_{p+q}(X/S).
\]
If $X=S$, then we can also consider the cohomological form of the $\delta$-homotopy Leray spectral sequence:
\[
E_2^{p,q}=A^p_\delta(B,H^q_{\delta\eff}f_*\E) \Rightarrow \E^{p+q}(X).
\]
\end{prop}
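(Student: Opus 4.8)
The homological statement is, in essence, the conclusion of the computation already carried out in Paragraph~\ref{num:deltaeffectivity}, so the plan is to assemble the pieces. First I would recall from Theorem~\ref{thm:Leray_ssp} that, under the hypotheses of \ref{num:Leray_ssp}, the homotopy Leray spectral sequence is convergent with $E^2$-page $E^2_{p,q}=H_p(B,H^\delta_q(f_*\E^!_X))\simeq A_p^\delta(B,H^\delta_q(f_*\E^!_X))$ abutting to $\E_{p+q}(X/S)$; since we are only going to rewrite the $E^2$-page, convergence is automatic. Next, using hypothesis~(2), properness of $f$ gives $f_*(\E^!_X)=f_!(\E^!_X)$, and since $\E^!_X$ is $\delta$-effective by hypothesis~(1), the last adjunction listed in Paragraph~\ref{num:delta-effective}, namely $f_!\colon\T^{\delta\eff}(X)\leftrightarrows\T^{\delta\eff}(B)$, shows that $f_*(\E^!_X)$ is $\delta$-effective as well. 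With $\un_B$ and $f_*(\E^!_X)$ both in the essential image of $s$, the four-step chain of identifications displayed in \ref{num:deltaeffectivity} (fully faithfulness of $s$, the adjunction $(s,w)$, the identity $w\circ H^\delta_q=H_q^{\delta\eff}\circ w$ coming from $t_\delta$-exactness of $w$, and $w\circ s=\mathrm{id}$) produces a canonical isomorphism $H_p(B,H^\delta_q(f_*\E^!_X))\simeq\Hom_{\T^{\delta\eff}(B)}(\un_B[p],H_q^{\delta\eff}(f_*\E^!_X))$.

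It then remains to recognise the right-hand group as the effective Gersten homology $A_p^\delta(B,H_q^{\delta\eff}(f_*\E^!_X))$. For this I would appeal to the $\delta$-effective analogue of Proposition~\ref{prop:Gersten}: by Proposition~\ref{prop:Gersten}(1) the Gersten complex of Definition~\ref{df:Gersten} depends only on the fiber $\delta$-homology of its coefficient object, and passing from the $\delta$-homotopy $t$-structure on $\T(B)$ to its effective counterpart on $\T^{\delta\eff}(B)$ amounts, by Definition~\ref{df:delta-eff} and Theorem~\ref{thm:effectivedeltahomotopytstructure}, to restricting the $\GG$-grading of fiber homology to negative integers. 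Hence the Gersten complex of $H_q^{\delta\eff}(f_*\E^!_X)$ formed in $\T^{\delta\eff}(B)$ is a graded summand of the one formed in $\T(B)$, its $p$-th homology is $A_p^\delta(B,H_q^{\delta\eff}(f_*\E^!_X))$, and by the effective form of Proposition~\ref{prop:Gersten}(3) this coincides, for an object in the effective heart, with $\Hom_{\T^{\delta\eff}(B)}(\un_B[p],H_q^{\delta\eff}(f_*\E^!_X))$. Feeding this back into the $E^2$-description of Theorem~\ref{thm:Leray_ssp} gives the first displayed spectral sequence.

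For the cohomological version one specialises the setup of \ref{num:deltaeffectivity} to $X=S$, so that $\pi=\mathrm{id}$ and $\E^!_X=\E$, and one starts instead from the cohomological homotopy Leray spectral sequence of Theorem~\ref{thm:deltahomotopylerayss}, whose $E_2$-page is $A^p_\delta(B,H^q_\delta(f_*\E))$ and which abuts to $H^{p+q}(X,\E)=\E^{p+q}(X)$. Properness of $f$ again forces $f_*\E=f_!\E$ to be $\delta$-effective, and repeating verbatim the arguments of the previous two paragraphs with cohomological indexing (writing $H^q_\delta=H^\delta_{-q}$ and $H^q_{\delta\eff}=H_{-q}^{\delta\eff}$) yields $A^p_\delta(B,H^q_\delta(f_*\E))\simeq A^p_\delta(B,H^q_{\delta\eff}(f_*\E))$ and hence the second displayed spectral sequence.

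The only genuinely non-formal inputs — and the steps I expect to cost the most care — are (a) checking that $f_*(\E^!_X)$ lands in $\T^{\delta\eff}(B)$, which is precisely where properness of $f$ and the stability of $\delta$-effectivity under $f_!$ (Paragraph~\ref{num:delta-effective}) are needed, and (b) the compatibility of homology functors across the adjunction, i.e.\ that $w$ is $t_\delta$-exact and that the effective $t$-structure of Theorem~\ref{thm:effectivedeltahomotopytstructure} is detected by effective fiber $\delta$-homology, so that $w\circ H^\delta_q=H_q^{\delta\eff}\circ w$. Everything else is bookkeeping with the identifications already recorded in \ref{num:deltaeffectivity} and in Proposition~\ref{prop:Gersten}.
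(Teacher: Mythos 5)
Your argument is essentially the paper's own proof: the proposition is presented as the outcome of Paragraph~\ref{num:deltaeffectivity}, namely that properness of $f$ together with hypothesis (1) makes $f_*\E_X^!=f_!\E_X^!$ $\delta$-effective (Paragraph~\ref{num:delta-effective}), after which the four identifications using full faithfulness of $s$, the adjunction $(s,w)$ and the $t_\delta$-exactness of $w$ rewrite the $E^2$-term of Theorem~\ref{thm:Leray_ssp} as $\Hom_{\T^{\delta\eff}(B)}(\un_B[p],H_q^{\delta\eff}f_*\E_X^!)$. Your additional remarks — identifying this Hom-group with the Gersten homology $A_p^\delta(B,H_q^{\delta\eff}f_*\E_X^!)$ via the effective analogue of Proposition~\ref{prop:Gersten}, and re-running the argument with Theorem~\ref{thm:deltahomotopylerayss} for the cohomological case $X=S$ — only spell out steps the paper leaves implicit.
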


\begin{ex}
The interest of the preceding proposition is that it is easier to get bounded objects with respect to the $\delta$-effective category: see Example \ref{ex:unit_eff}.

Let us consider either the homotopical or motivic case.  We consider a proper morphism $f:X \rightarrow B$ of schemes essentially of finite type over $k$ such that $X$ is $k$-smooth.  We let our dimension functions be induced by that computed relative to $k$.  Then $\un_X(n)$ for $n\geq 0$ and $\un_B$ are both $\delta$-positive.

If we assume $X$ is of pure dimension $d$, so that $\delta(X)=d$ according to the preceding choice, then the constant object $\un_X$ is concentrated in homological degree $d$. Therefore the preceding spectral sequence, in its homological form, is concentrated in degrees $q \in [d,d+\dim(f)]$.
\end{ex}

The following definition is an analog of the notion of local system in classical topology.
%

\begin{df}
\label{df:simplehomotopymodule}
Suppose $p:B \rightarrow S$ is a separated morphism of finite type and $\E$ is a $\T$-spectrum over $B$.  We will say that $\E$ is \emph{$S$-simple} if there exists a $\T$-spectrum $\E_0$ over $S$ and an isomorphism $\E \simeq p^!\E_0$.  We will say that $\T$ is \emph{locally simple over $S$} (or locally $S$-simple)
 if there exists a Nisnevich cover $\pi:W \rightarrow B$ such that $\pi^*\E$ is $S$-simple (with respect to the projection $p \circ \pi$).
\end{df}

Note that the terminology $S$-simple is analogous to the classical terminology of "simple local system". This corresponds to the case of trivial monodromy.

\begin{rem}
This notion will come into play mainly when $S$ is the spectrum of a base field $k$, or, in the motivic case, of a base Dedekind ring $A$.  For us, the interest comes, as in topology, in the study of the homotopy Leray spectral sequence (Definition \ref{df:Leray_ssp}). Usually, we will start with a $k$-simple $\T$-spectrum $\E$ over $S$ -- thus $\E_X$ is $k$-simple.  If we know that the homotopy module $H_q^\delta  f_*\E$ over $B$ is $k$-simple, then the $E_2$-term depends only on the homotopy type (resp. motive) of $B$ over $k$.  We will give examples in Section \ref{sec:applications}.
\end{rem}

\begin{rem}
\begin{enumerate}
\item Obviously, locally $S$-simple $\T$-spectra over $B$ are stable under suspensions, twists
 and even tensor products by Thom spaces of virtual bundles over $B$.
  The same is true for $S$-simple $\T$-spectra except
   that one can only twist  them by virtual bundles over $B$
    that come from $S$.
 Neither of these notions is stable under extensions or even direct factors in general.
\item Let $f:B \rightarrow S$ be smooth and consider the motivic case.
 Then $f^!=f^*(d)[2d]$ where $d$ is the relative dimension of $f$.
 One deduces that $S$-simple motives over $B$ are stable under tensor products.
 The same remark applies to oriented spectra over $B$,
 but not to arbitrary spectra.
\item Note that if $k$ is a field, $k$-simple over a scheme $B$ implies $B$-pure
 (Definition \ref{df:purity}).
\item One could say that a homotopy module over $B$ is $S$-constructible
 if it is obtained by a finite number of extensions and direct factors  of locally $S$-simple homotopy modules,
 within the abelian category of homotopy modules over $B$.

This notion is not so well-behaved, compared to its model for torsion \'etale sheaves,
 as it lacks some notion of finiteness. It would be desirable to have some good finiteness
 condition on $S$-simple homotopy modules. But even when $S$ is a base field,
 it is not obvious to find such a finiteness condition; see \cite[Rem. 6.7]{Deg9} for
 further discussion.
\end{enumerate}
\end{rem}

Note the following fact.
\begin{lm}
Consider a separated morphism $f:B \rightarrow S$ of finite type,
 and a $\T$-spectrum $\E$ over $B$.
 Let us fix unrelated dimension functions $\delta$ on $B$
  and $\delta_0$ on $S$.

If $\E$ is $S$-simple, then for any $p \in \ZZ$, the homotopy module
 $H_q^\delta(\E)$ over $B$ is $S$-simple.
\end{lm}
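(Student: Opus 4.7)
The plan is to leverage the $t$-exactness of $f^!$ recalled in Paragraph~\ref{num:basic_t_exact} (originally \cite[3.3.7(4)]{BD1}). Let $\delta^f$ denote the dimension function on $B$ induced from $\delta_0$ via $f$, as defined in our Notations and conventions. For this choice, $f^!\colon \T(S) \to \T(B)$ is $t$-exact when $\T(S)$ is given the $t_{\delta_0}$-structure and $\T(B)$ the $t_{\delta^f}$-structure; hence it commutes with the truncation functors and consequently with the homology functors. Writing $\E \simeq f^! \E_0$, this at once gives a canonical isomorphism
$$
H_q^{\delta^f}(\E) \;\simeq\; f^!\bigl(H_q^{\delta_0}(\E_0)\bigr),
$$
which already establishes the conclusion for the induced dimension function $\delta^f$ in place of the given $\delta$.

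The next step is to reconcile the given dimension function $\delta$ with the induced one $\delta^f$. Since any two dimension functions on $B$ differ by a locally constant $\ZZ$-valued function, one may decompose $B$ into connected components on which $\delta - \delta^f$ equals a constant $n$. On each such piece, Remark~\ref{rem:delta_independence} --- specifically the identity \eqref{eq:delta_independence} --- yields the relation $H_q^{\delta}(-) \simeq H_{q+n}^{\delta^f}(-)$. Combining this with the displayed isomorphism and assembling over the connected-component decomposition of $S$, one produces a $\T$-spectrum $\G$ over $S$ --- obtained from $\E_0$ by component-wise shifts and truncations --- with $H_q^\delta(\E) \simeq f^!\G$, so that $H_q^\delta(\E)$ is $S$-simple.

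The bookkeeping in this final assembly is the main obstacle to watch: if several connected components of $B$ map to the same component of $S$ while carrying distinct shifts $n$, one must reconcile them into a single spectrum $\G$ on $S$. In the typical case where $S$ is connected (for instance $S = \spec(k)$, which is the situation relevant to the applications of this lemma), the shift $n$ is a single integer and the construction of $\G$ is immediate; in general it is handled using the canonical decomposition of $\T$-spectra along connected components of their base together with the compatibility of $f^!$ with shifts.
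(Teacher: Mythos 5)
Your argument is essentially the paper's proof: both rest on the $t_{\delta_0}$-exactness of $f^!$ applied to $\E \simeq f^!\E_0$, combined with the shift relation of Remark \ref{rem:delta_independence} comparing the given $\delta$ with the dimension function induced by $\delta_0$, yielding $H_q^\delta(\E) \simeq f^! H_{q-n}^{\delta_0}(\E_0)$ component by component. The bookkeeping you flag at the end (several components of $B$ over one component of $S$ with distinct shifts $n$) is dispatched in the paper simply by reducing to $B$ connected ``by additivity,'' so your treatment is no less complete than the published one.
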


\begin{proof}
We just need to be precise about dimension functions. By additivity, we can assume that $B$
 is connected.
 Then the dimension function $\delta_0^f$ on $B$,
  induced by $\delta_0$, satisfies the relation
 $\delta_0^f=\delta+n$ for a fixed integer $n \in \ZZ$ (see Remark \ref{rem:delta_independence}).
 In particular, $H_q^{\delta_0}(\E)=H_{q+n}^\delta(\E)$ according to \emph{loc. cit.}

By assumption, $\E=f^!\E_0$. it remains to apply the fact $f^!$ is $t_{\delta_0}$-exact to conclude:
\[
H_{q}^{\delta}(\E)=H_{q-n}^{\delta_0}(\E) \simeq H_{q-n}^{\delta_0}(f^!\E_0)
 =f^!H_{q-n}^{\delta_0}(\E_0).
\]
\end{proof}

\begin{ex}
In the motivic case, if $f:X \rightarrow S$ is smooth, then the constant object $\un_X$ is $S$-simple.  In contrast, if  $f$ has sufficiently complicated singularities, then $\un_X$ may fail to be $S$-simple.  


Similarly, the classical oriented ring spectra $\HH R_X$, $\KGL_X$ (K-theory), $\MGL_X$ (algebraic cobordism), $\hat \HH \QQ_{\ell,X}$ (representing continuous $\ell$-adic cohomology) are all $S$-simple.

On the contrary, spectra representing non-orientable theories such as $\HH \tilde R_X$, $\mathrm{KQ}_X$ (hermitian K-theory), or the sphere spectrum $\mathbf S^0_X$, are not $S$-simple except when the tangent bundle of $f$ is trivial (or is the pullback of a vector bundle over $S$).  In any case, they all are locally $S$-simple.

Note finally that when $f:X \rightarrow S$ is arbitrary separated of finite type, the main result of \cite{JinGth}, Theorem 1.3, tells us that the spectrum $\mathbf{GGL}_X$ representing algebraic $G$-theory in $\SH(X)$ is $S$-simple.
\end{ex}

As expected, here is the generic case where the homotopy modules appearing in the $E_2$-term of the homotopy Leray spectral sequence are simple.
\begin{prop}
\label{prop:trivial_fibration}
Let $B$ and $F$ be $S$-schemes separated essentially of finite type.  We consider the trivial fibration $f:X=F \times_S B \rightarrow B$.  Then for any $S$-simple $\T$-spectrum $\E$ over $X$, the $\T$-spectrum $f_*\E$ is $S$-simple.
\end{prop}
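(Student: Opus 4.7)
The plan is to reduce the claim to a base change identity in the six functor formalism. First I would set up the cartesian square
\[
\xymatrix@=18pt{
X = F \times_S B \ar[r]^-{g} \ar[d]_f & F \ar[d]^q \\
B \ar[r]^p & S
}
\]
in which $p$ and $q$ are the structural morphisms (both separated and essentially of finite type by hypothesis) and $g$ is the other projection. Writing $\pi = p \circ f = q \circ g$ for the structural morphism of $X$ over $S$, the hypothesis that $\E$ is $S$-simple provides a $\T$-spectrum $\E_0$ over $S$ together with an isomorphism $\E \cong \pi^! \E_0 \cong g^! q^! \E_0$.

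Next I would invoke the base change isomorphism $f_* g^! \cong p^! q_*$ attached to the square. This is the right-adjoint form of the proper base change $q^* p_! \cong g_! f^*$: taking right adjoints of both sides (and reversing the order of composition) yields precisely $p^! q_* \cong f_* g^!$. The isomorphism is part of the six functor formalism satisfied by $\T$, extended to essentially-of-finite-type morphisms via Proposition~\ref{prop:extension_shrick}. Combining with the previous step then yields
\[
f_* \E \cong f_* g^! q^! \E_0 \cong p^! q_* q^! \E_0,
\]
so that setting $\E_1 := q_* q^! \E_0 \in \T(S)$ gives $f_* \E \cong p^! \E_1$, exhibiting $f_* \E$ as $S$-simple over $B$.

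The proof is short, and the only substantive step is to identify the correct variant of base change: namely the one relating $f_* g^!$ with $p^! q_*$, which is the right-adjoint shadow of proper base change rather than the version relating $q^! p_*$ with $g_* f^!$ (also available but not useful here). Any technicalities arising from the fact that $p$ and $q$ are merely essentially of finite type are absorbed by Proposition~\ref{prop:extension_shrick}, which both defines the functors $p_!, p^!, q_!, q^!$ and guarantees that the base change isomorphisms are transported through the filtered-limit construction used there.
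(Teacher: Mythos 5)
Your proof is correct and follows essentially the same route as the paper: the paper's proof uses the identical cartesian square and the exchange isomorphism $p^!f_{0*}\xrightarrow{\sim}f_*q^!$ (in your notation $p^!q_*\cong f_*g^!$) to compute $f_*\E \cong p^!(q_*q^!\E_0)$, exactly as you do. Your extra remarks — deriving the exchange as the right adjoint of proper base change and invoking Proposition~\ref{prop:extension_shrick} for the essentially-of-finite-type extension — are just a more explicit account of steps the paper leaves implicit.
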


\begin{proof} This is a trivial exercise on the six functors formalism.  We consider the cartesian square:
\[
\xymatrix@=20pt{
X\ar^f[r]\ar_q[d]\ar@{}|\Delta[rd] & B\ar^p[d] \\
F\ar^{f_0}[r] & S.
}
\]
Then we get an associated exchange isomorphism: $Ex(\Delta^!_*):p^!f_{0*} \xrightarrow \sim f_*q^!$.  By assumption, there exists a $\T$-spectrum $\E_0$ over $S$ such that $\E=h^!\E_0$,
 $h=pf=f_0q$.  Thus we can do the computation:
\[
f_*\E=f_*h^!\E_0=f_*q^!f_0^!\E_0 \xrightarrow{Ex(\Delta^!_*)^{-1}} p^!f_{0*}f_0^!\E_0.
\]
This concludes the proof.
\end{proof}

\begin{cor}
Let $f:X \rightarrow B$ be Nisnevich locally principle fibration
 of $S$-schemes. Then for any locally $S$-simple $\T$-spectrum $\E$ over $B$,
 the $\T$-spectrum $f_*\E$ is locally $S$-simple.
\end{cor}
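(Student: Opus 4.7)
The plan is to reduce, by Nisnevich descent on $B$, to the trivial-fibration case already handled in Proposition~\ref{prop:trivial_fibration}. (Observe first that the statement only makes sense with $\E$ a $\T$-spectrum over $X$ rather than over $B$, so that $f_*\E$ is defined; I assume that correction.) From the hypothesis that $f: X \to B$ is Nisnevich-locally principal, I obtain a Nisnevich cover $\pi: W \to B$ such that the pullback $X_W := X \times_B W$ is $W$-isomorphic to a trivial fibration $F \times_S W$; from the hypothesis that $\E$ is locally $S$-simple on $X$ I obtain a Nisnevich cover $\tau: V \to X$ with $\tau^*\E$ literally $S$-simple. Writing $\pi': X_W \to X$ and $f_W: X_W \to W$ for the projections, smooth base change along the etale morphism $\pi$ gives an exchange isomorphism $\pi^* f_*\E \simeq f_{W*}\pi'^*\E$, which reduces the problem to showing that the right-hand side is (locally) $S$-simple over $W$.

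Next I would perform a compatibility refinement: replace $W$ by a finer Nisnevich cover $\rho: W_1 \to W$ so that the composite $\pi_1' := \pi' \circ (X_{W_1} \to X_W): X_{W_1} \to X$ factors through $\tau: V \to X$. Once this is arranged, $\pi_1'^*\E$ is literally $S$-simple on $X_{W_1} \cong F \times_S W_1$. Applying Proposition~\ref{prop:trivial_fibration} to the trivial fibration $f_{W_1}: F \times_S W_1 \to W_1$ with coefficients in this $S$-simple spectrum yields that $f_{W_1 *}\pi_1'^*\E$ is $S$-simple on $W_1$. Combining with the base change isomorphism $(\pi \rho)^* f_*\E \simeq f_{W_1 *}\pi_1'^*\E$, this exhibits $(\pi\rho)^* f_*\E$ as $S$-simple on $W_1$, which is exactly the statement that $f_*\E$ is locally $S$-simple on $B$.

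The main obstacle will be the compatibility refinement step: it is not automatic that, for an arbitrary etale cover $V \to X$, one can find a Nisnevich refinement $W_1 \to B$ whose pullback $X_{W_1} \to X$ factors through $V$. Overcoming this requires exploiting the torsor structure implicit in ``Nisnevich-locally principal''---over a sufficiently small Nisnevich neighborhood of $B$ the map $f$ is a trivial $G$-torsor, and the translation action of $G$ on fibers can be used to adjust $W$ until $V \to X$ pulls back to a split cover of $X_W$. A cleaner alternative, which I would probably pursue in a final write-up, is to first observe that local $S$-simplicity is a Nisnevich-local property of $\E$ compatible with etale pullback; combined with the etale base change for $\pi'$, this promotes the argument above to a formal consequence of Proposition~\ref{prop:trivial_fibration} without any explicit manipulation of covers.
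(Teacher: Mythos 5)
Your overall strategy --- base change along a trivializing Nisnevich cover $\pi:W\rightarrow B$, using $\pi^*f_*\E\simeq f_{W*}\pi'^*\E$ (valid here since $\pi$ is \'etale, so $\pi^*\simeq\pi^!$ and the exchange $\pi^!f_*\simeq f_{W*}\pi'^!$ applies), and then invoking Proposition~\ref{prop:trivial_fibration} for the trivial fibration $F\times_S W\rightarrow W$ --- is exactly the argument the corollary is meant to rest on (the paper gives no written proof). But the step you yourself flag as the ``main obstacle'' is a genuine gap, and it cannot be repaired as you suggest. A Nisnevich cover $\tau:V\rightarrow X$ of the \emph{total space} cannot, in general, be refined by a cover pulled back from $B$: a factorization of $X_{W_1}\rightarrow X$ through $V$ forces each connected component of $X_{W_1}\simeq F\times_S W_1$ to land in a single piece of $V$, hence forces pieces of $V$ to contain entire fibers of $f$; already for a Zariski cover of $F\times_S B$ by two opens neither of which contains a whole fiber, no refinement $W_1\rightarrow B$ exists. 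The torsor ``translation'' idea does not help --- translating by $G$ cannot turn a cover that is nontrivial in the fiber direction into one pulled back from the base --- and the ``cleaner alternative'' is not formal either: Proposition~\ref{prop:trivial_fibration} needs an $S$-simple (not merely locally $S$-simple) coefficient on $F\times_S W$, and local simplicity on the total space is measured by covers of $X_W$ with which $f_{W*}$ does not commute, so nothing is ``promoted''.

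The way out is to read the hypothesis as the statement actually indicates (``$\E$ over $B$''): the coefficient whose pushforward is considered is obtained from a locally $S$-simple spectrum on $B$ --- concretely, one works with $f^!\E$ (or the relevant pullback to $X$), whose trivializing Nisnevich cover is then \emph{pulled back from $B$}. With that reading there is no factorization problem at all: choose one Nisnevich cover of $B$ trivializing $f$ and another over which $\E$ becomes $S$-simple, pass to a common refinement $W$, note that $\pi'^*f^!\E\simeq f_W^!\pi^*\E$ is $S$-simple on $F\times_S W$, and your first paragraph (base change plus Proposition~\ref{prop:trivial_fibration}) already completes the proof. As written for an arbitrary locally $S$-simple spectrum on $X$ whose trivializing cover is unrelated to $B$, your argument does not go through, and it is not clear the statement itself would.
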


\section{Applications}
\label{sec:applications}
In this section, we study the homotopy Leray spectral sequence in various simple cases and include some applications.  Section~\ref{subsec:aonecontractiblefibers} studies morphism with (stably) $\AA^1$-contractible fibers, and then morphisms with fibers that are ``$\AA^1$-homology spheres".  Section~\ref{subsec:gysinwang} studies ``fibrations" with either base or fiber that are motivic spheres.  Finally, Section~\ref{subsec:relativecellular} is concerned with some applications of the spectral sequence to relative cellular spaces.

\subsection{Morphisms with $\AA^1$-contractible fibers}
\label{subsec:aonecontractiblefibers}
We first analyze the case where the fibers are $\AA^1$-homotopically ``as simple as possible", i.e., $\AA^1$-contractible.

\begin{prop}
\label{prop:degenerationwithcontractiblefibers}
Suppose $f:X \rightarrow B$ be a smooth morphism with $\AA^1$-contractible fibers.  Let $M$ be a spectrum (resp. a motive) that is a homotopy module over $B$ and set $M_X := f^*M$.  Then,
\[
H^q_\delta f_*M_X=\begin{cases}
M & q=0, \\
0 & q\neq 0.
\end{cases}
\]
In particular, the homotopy Leray spectral sequence is concentrated on the line $q=0$, and thus degenerates; pullback along $f$ yields identifications (in cohomological notation) of the form:
\[
H^p(X,M_X)=H^p(B,H^0_\delta f_*(M_X))=H^p(B,M).
\]
\end{prop}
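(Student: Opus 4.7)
The plan is to reduce the claim to a computation on fiber $\delta$-homology, exploiting the conservativity of the family $(\hat H^q_\delta)_{q\in\ZZ}$ on $\T(B)$ (Remark~\ref{rem:Rost_modules}). Fix a point $x:\spec(K)\to B$ with $\delta(x)=-p$, write $f_x:X_x\to\spec(K)$ for the base change of $f$ (still smooth, still with $\AA^1$-contractible source by hypothesis), and put $M_x:=x^!M\in\T(\spec K)$. Since $f$ is smooth and $M_X=f^*M$, Proposition~\ref{prop:compute_E1_coh_Leray}(1) evaluates
\[
\hat H^q_\delta(f_*M_X)_r(x)=H^{p+q}(X_x,M_x\gtw{p+r}).
\]

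The key geometric input now is that $X_x$ is $\AA^1$-contractible over $K$, which stably means $f_{x\sharp}\un_{X_x}\simeq\un_K$ in $\T(\spec K)$; combined with the adjunction $(f_{x\sharp},f_x^*)$ this yields a canonical isomorphism
\[
H^{p+q}(X_x,M_x\gtw{p+r})\simeq H^{p+q}(\spec K,M_x\gtw{p+r}).
\]
Since $x^!$ is $t_\delta$-exact (Paragraph~\ref{num:basic_t_exact}) and $M$ lies in the heart, $M_x$ lies in the heart of $\T(\spec K)$ with respect to the induced dimension function $\delta_0$, for which $\delta_0(\spec K)=-p$. The characterization of the heart at points in Paragraph~\ref{num:homological_delta_ref} then forces $H^{p+q}(\spec K,M_x\gtw{p+r})=0$ whenever $q\neq 0$, and a direct match of indices identifies it with $\hat H^0_\delta(M)_r(x)$ when $q=0$.

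To conclude, conservativity of $(\hat H^q_\delta)_{q\in\ZZ}$ gives $H^q_\delta(f_*M_X)=0$ for $q\neq 0$, so $f_*M_X$ is concentrated in heart degree zero. The unit of the adjunction $M\to f_*f^*M=f_*M_X$ induces a morphism $M=H^0_\delta(M)\to H^0_\delta(f_*M_X)$ in the heart which, by tracing through the identifications above, restricts at every point to the identity on $\hat H^0_\delta(M)(x)$; it is therefore an isomorphism by conservativity of fiber $\delta$-homology on the heart (Remark~\ref{rem:Rost_modules}). The homotopy Leray spectral sequence of Theorem~\ref{thm:deltahomotopylerayss} is then concentrated on the line $q=0$, collapses at $E_2$, and the abutment filtration yields the asserted isomorphism $H^p(X,M_X)\simeq H^p(B,M)$. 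The main subtlety is a careful bookkeeping of the dimension-function and Tate-twist shifts when identifying the $q=0$ groups with those of $M$; the computation at points is the only potential obstacle, and everything else is formal from the $t$-structure and conservativity.
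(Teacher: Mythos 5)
Your proposal is correct and follows essentially the same route as the paper's proof: evaluate fiber $\delta$-homology at points via Proposition~\ref{prop:compute_E1_coh_Leray}(1), use the (stable) $\AA^1$-contractibility of the fibers to identify $H^{p+q}(X_x,M_x\gtw{p+r})$ with the corresponding group over $\spec(K)$, and conclude by conservativity of fiber $\delta$-homology; the paper merely packages this by showing directly that the unit map $M\to f_*M_X$ becomes the pullback $f_x^*$ at each point, rather than first proving concentration in degree $0$ and then checking the unit separately. The only point left implicit in your write-up is that the resulting identification $H^p(X,M_X)\simeq H^p(B,M)$ on the abutment is induced by pullback along $f$, which the paper records by citing the functoriality of the spectral sequence (Proposition~\ref{prop:coh_leray_pullback}).
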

Note in particular that $f_*$ respects $S$-simple and locally $S$-constant objects.
\begin{proof}
Consider the map
\[
\alpha:H^q_\delta(M) \rightarrow H^q_\delta(f_*M_X)
\]
induced by the adjunction map $M \rightarrow f_*f^*(M)$.
We need only to prove it is an isomorphism on fiber homology (Remark \ref{rem:Rost_modules}). We compute the above map,
 evatuated at a point $x:\spec K \rightarrow B$ and in $\GG$-degree $r \in \ZZ$:
\[
H^q_\delta(M)_r(x) \rightarrow H^q_\delta(f_*M_X)_r(x).
\]
We use the computation of Proposition~\ref{prop:compute_E1_coh_Leray}(1),
 which we can apply as $f$ is smooth.
 So if we put $\delta(x)=-p$ and $M_x=x^!M$
 (using the notation of Proposition \ref{prop:extension_shrick}), one obtains:
\begin{align*}
H^q_\delta(M_X)_r(x)&=H^{p+q}(x,M_x\gtw{p+r}), \\
H^q_\delta(f_*M_X)_r(x)&=H^{p+q}(X_x,M_x\gtw{p+r}).
\end{align*}
Moreover, the canonical map $\alpha$ is isomorphic to the pullback map:
\[
f_x^*:H^{q+r-2p,r-p}(x,M_x) \rightarrow H^{q+r-2p,r-p}(X_x,M_x),
\]
which is itself an isomorphism as $f_x$ is assumed to be an $\AA^1$-weak equivalence.  Note also that, because the spectral sequence is functorial with respect to pullbacks (Proposition \ref{prop:coh_leray_pullback}), we know that the identification of the statement arises from the pullback map along $f$.  The remaining assertions are straightforward.
\end{proof}

\begin{ex}
There exist many examples of morphisms satisfying the hypotheses of Proposition~\ref{prop:degenerationwithcontractiblefibers}.  For concreteness, assume $k$ is a field having characteristic $0$.  In \cite[Theorem 1.3]{ADContractible}, it is shown that there exist connected $k$-schemes $B$ of arbitrary dimension and smooth morphisms $f: X \to B$ of relative dimension $\geq 6$ whose fibers are ${\mathbb A}^1$-contractible and such that fibers over distinct $k$-points of $B$ are pairwise non-isomorphic.  Because of the last point, such morphisms $f$ are {\em not} Zariski locally trivial.  These results were improved in \cite{DuboulozFasel,DuboulozPauliOstvaer}, where it was shown that one could build $f$ as above that are smooth of relative dimension $\geq 3$.  On the other hand, it is expected that the only $\AA^1$-contractible smooth $k$-scheme of dimension $2$ is $\AA^2$, and a long-standing conjecture of Dolgachev--Weisfeiler \cite[3.8.5]{DolgachevWeisfeiler} states that every flat morphism of (say) smooth schemes with all fibers isomorphic to affine space is Zariski locally trivial.

Suppose $f$ is a smooth morphism $f$ with $\AA^1$-contractible fibers.  It is not clear to the authors whether such an $f$ is {\em unstably} an $\AA^1$-weak equivalence without imposing further hypotheses (e.g., that $f$ is Nisnevich locally trivial).  Nevertheless, the following remark demonstrates that such $f$ are {\em stable} $\AA^1$-weak equivalences in a strong sense, which makes Proposition~\ref{prop:degenerationwithcontractiblefibers} somewhat unsurprising.
\end{ex}

\begin{rem}
\label{rem:stable_A1_w_eq}
Given a map $f:X \rightarrow B$ as in Proposition~\ref{prop:degenerationwithcontractiblefibers}, one can directly show that the adjunction map:
\[
Id \rightarrow f_*f^*
\]
is an isomorphism of functors. Indeed, we can use the same argument as above and the fact the family of functors $x^!:\SH(B) \rightarrow \SH(\spec(\kappa(x)))$, indexed by schematic points $x \in B$, is conservative.\footnote{One uses for example Proposition
\ref{prop:extension_shrick} to define $x^!$.  The conservativity property is obtained using the continuity property of $\SH$ and the localization property.}  In particular, one does not need to work over a base field; nor does one need to invert any integers.

As $f$ is smooth, we also get a natural transformation $f_\sharp f^* \rightarrow Id$,
 which is an isomorphism according to the result of the preceding paragraph.
 In particular, the map
\[
\Sigma^\infty X_+=f_\sharp f^*(\un_B) \rightarrow \un_B=\Sigma^\infty B_+
\]
is an isomorphism in $\SH(B)$.  In fact, $f$ is a {\em universal} stable $\AA^1$-weak equivalence since $f$ is a stable $\AA^1$-weak equivalence and the property mentioned above remains true after base change.
\end{rem}


\subsection{Gysin and Wang sequences}
\label{subsec:gysinwang}
The relative Atiyah--Hirzebruch spectral sequence takes a particularly simple form when the Serre fibration $F \to X \to B$ under consideration has either the property that $B$ is a sphere, or $F$ is a homology sphere and the associated local system on $B$ is trivial.  In those cases, the spectral sequences yields the so-called Wang or Gysin long exact sequences.  The fact that the differentials in the cohomological form of the spectral sequence are derivations yields additional structure in these long exact sequences that is frequently useful in computations.

\subsubsection*{${\mathbb A}^1$-homology spheres}
In motivic homotopy theory, there are many smooth schemes over a base $S$ that have the stable $\AA^1$-homotopy type of a motivic sphere $\Sigma^i \mathbf{G}_m^{\wedge j}$.  For example, Morel--Voevodsky showed that $\AA^n \setminus 0_S$ has the $\AA^1$-homotopy type of $\Sigma^{n-1}\mathbf{G}_m^{\wedge n}$.  Likewise, the split smooth affine quadric $Q_{2n-1}$ defined by the hypersurface $\sum_i x_ix_{n+i} = 1$ in $\AA^{2n}_S$ is $\AA^1$-weakly equivalent to $\AA^n \setminus 0$ and \cite[Theorem 2]{AsokDoranFasel} demonstrates that the smooth affine quadric $Q_{2n}$ defined by the equation $\sum_i x_i x_{n+i} = x_{2n+1}(1 - x_{2n+1})$ in $\AA^{2n+1}_{S}$ is a model of $\Sigma^n \mathbf{G}_m^{\wedge n}$.  On the other hand, it is known that $\Sigma^i \mathbf{G}_m^{\wedge j}$ has no smooth model if $i > j$ and conjecturally has no smooth model if $i < j-1$.  We now formulate a definition of ``homology sphere" in $\AA^1$-homotopy theory.

\begin{df}
\label{df:aonehomologysphere}
We say that a (pointed) smooth $S$-scheme $X$ is an $\AA^1$-homology sphere if there exist integers $p,q,r \geq 0$ and an $\AA^1$-weak equivalence $\Sigma^r X \sim \Sigma^p \mathbf{G}_m^{\wedge q}$.
\end{df}

The next proposition gives a construction of many $\AA^1$-homology spheres, at least over a field.

\begin{prop}
\label{prop:homologyspheres}
Fix a smooth base scheme $S$, and an $\AA^1$-contractible smooth $S$-scheme $X$.  Assume there exists a closed immersion of $S$-schemes $x: S \to X$ with trivial normal bundle $\nu_{x/X}$.
\begin{enumerate}
\item A choice of trivialization of $\nu_{x/X}$ determines an $\AA^1$-weak equivalence $\Sigma X \setminus x \cong \Sigma \AA^d \setminus 0$, i.e., $X \setminus x$ is an $\AA^1$-homology sphere.
\item If $S = \Spec k$ for a perfect field $k$, $X$ has dimension $d \geq 3$ and is $\AA^1$-connected, then $X \setminus x$ is $\AA^1$-simply connected as well.
\end{enumerate}
\end{prop}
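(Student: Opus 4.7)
\smallskip
\noindent\textbf{Proof plan.} The plan is to deploy the Morel--Voevodsky homotopy purity equivalence attached to the closed immersion $x\colon S\hookrightarrow X$, yielding a cofiber sequence
\[
X\setminus x \longrightarrow X \longrightarrow \Th(\nu_{x/X})
\]
in the pointed $\AA^1$-homotopy category $\mathcal H_\bullet(S)$, and then to extract the consequences of the hypotheses that $X$ is $\AA^1$-contractible and that $\nu_{x/X}$ is trivial.

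\smallskip
For part (1), I would first observe that the chosen trivialisation of $\nu_{x/X}$ identifies $\Th(\nu_{x/X})$ with the Thom space $\Th(\cO_S^d)$ of the trivial rank-$d$ bundle on $S$. Homotopy purity applied to the zero section $0\in\AA^d_S$ -- or equivalently the fact that $\AA^d_S$ is $\AA^1$-contractible -- gives $\Th(\cO_S^d)\simeq\AA^d/(\AA^d\setminus 0)\simeq\Sigma(\AA^d\setminus 0)$. Since $X$ is also $\AA^1$-contractible, the cofiber sequence above collapses to $\Th(\nu_{x/X})\simeq\Sigma(X\setminus x)$. Combining the two identifications produces the desired $\AA^1$-equivalence $\Sigma(X\setminus x)\simeq\Sigma(\AA^d\setminus 0)$.

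\smallskip
For part (2), under the additional hypotheses $S=\spec k$ with $k$ perfect and $d\geq 3$, the strategy is to combine Morel's $\AA^1$-connectivity theorem with a motivic Blakers--Massey argument. Morel's theorem implies $\AA^d\setminus 0$ is $\AA^1$-$(d-2)$-connected, and hence $\Th(\nu_{x/X})\simeq\Sigma(\AA^d\setminus 0)$ is $\AA^1$-$(d-1)$-connected. The motivic Blakers--Massey theorem (in the form established by Strunk or Bachmann--Hoyois, building on Morel's stable $\AA^1$-connectivity) then guarantees that the cofiber sequence induces an exact sequence of $\AA^1$-homotopy sheaves through degree at least $2d-3$, well beyond the range we need. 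Since $X$ is $\AA^1$-contractible and $\pi_i^{\AA^1}(\Th(\nu_{x/X}))=0$ for $i\leq d-1$, this forces $\pi_i^{\AA^1}(X\setminus x)=0$ for $0\leq i\leq d-2$; for $d\geq 3$ this yields precisely the $\AA^1$-simple connectivity of $X\setminus x$.

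\smallskip
The main obstacle will be invoking and tracking the correct version of motivic Blakers--Massey, and verifying that its connectivity estimate actually reaches $\pi_1^{\AA^1}$. The perfect-field hypothesis on $k$ is essential, both for Morel's connectivity theorem and for the resulting description of the low-degree $\AA^1$-homotopy sheaves of $\AA^d\setminus 0$. The bound $d\geq 3$ is sharp in this argument: it is exactly what forces $d-2\geq 1$, so that Blakers--Massey reaches $\pi_1^{\AA^1}$; for $d=2$ the same strategy only yields $\AA^1$-connectedness of $X\setminus x$. By contrast, the argument for part (1) is formal, requires no perfect-field hypothesis, and works over any smooth base $S$.
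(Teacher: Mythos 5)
Your part (1) is correct and follows essentially the same route as the paper: homotopy purity for $x\colon S\to X$, collapse of the cofiber sequence $X\setminus x\to X\to X/(X\setminus x)$ using $\AA^1$-contractibility of $X$ (the paper invokes left properness of the $\AA^1$-local model structure at this step), and the trivialization of $\nu_{x/X}$ identifying $\Th(\nu_{x/X})$ with $\Sigma^d\GG^{\wedge d}\simeq\Sigma(\AA^d\setminus 0)$.

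Part (2), however, has a genuine gap. Your deduction runs: $X$ is $\AA^1$-contractible and the cofiber $\Th(\nu_{x/X})\simeq\Sigma(\AA^d\setminus 0)$ is $\AA^1$-$(d-1)$-connected, hence (via motivic Blakers--Massey) $X\setminus x$ is $\AA^1$-$(d-2)$-connected. But Blakers--Massey for the pushout square with initial corner $X\setminus x$ requires, as input, connectivity estimates for the maps \emph{out of} $X\setminus x$ (in particular for the open immersion $X\setminus x\to X$, or equivalently for $X\setminus x$ itself); that is exactly what you are trying to prove, so the argument is circular as stated. Moreover, the purely formal implication ``cofiber highly connected and total space contractible $\Rightarrow$ subspace simply connected'' is false already in classical topology: for any acyclic space $A$ with nontrivial (perfect) fundamental group, the cofiber of $A\to *$ is $\Sigma A$, which is contractible, yet $A$ is not simply connected. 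Cofiber sequences simply do not control $\pi_1$ of the ``subspace'' term without prior connectivity input; the same objection applies to your claim that the argument even yields $\AA^1$-connectedness of $X\setminus x$, which the paper treats as a hypothesis (and verifies in its examples by chains of affine lines) rather than a formal consequence. The paper's proof of (2) is instead geometric: it first uses the unstable $0$-connectivity theorem to produce a $k$-point of $X\setminus x$ for basepointing, and then invokes the codimension-$\geq 3$ excision theorem (Theorem 4.1 of the cited Asok--Doran reference) to conclude that $\pi_1^{\AA^1}(X\setminus x)\to\pi_1^{\AA^1}(X)$ is an isomorphism, whence triviality follows from $\AA^1$-contractibility of $X$. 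To repair your argument you would need to replace the Blakers--Massey step by such an excision/codimension statement (or otherwise establish a priori connectivity of the map $X\setminus x\to X$), at which point the spectral/formal machinery becomes unnecessary.
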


\begin{proof}
For the first point, note that there is a cofiber sequence of the form
\[
X \setminus x \longrightarrow X \longrightarrow X/(X \setminus x) \longrightarrow \Sigma X \setminus x \longrightarrow \cdots
\]
Because the $\AA^1$-local model structure is left proper, the fact that $X$ is $\AA^1$-contractible implies that the map $Th(\nu_{x/X}) \to \Sigma X \setminus x$ is an $\aone$-weak equivalence.  Under the assumptions on $S$, there is a homotopy purity isomorphism $X/(X \setminus x) \cong Th(\nu_{x/X})$ and a choice of trivialization of $\nu_{x/X}$ determines an $\aone$-weak equivalence $Th(\nu_{x/X}) \stackrel{\sim}{\to} \Sigma^d \mathbf{G}_m^{\wedge d}$, which can be written $\Sigma \AA^d \setminus 0$.

For the second point, since $X \setminus x$ is $\AA^1$-connected, $X \setminus x$ has a non-empty set of $k$-points by the unstable $0$-connectivity theorem \cite[\S 2 Corollary 3.22]{MV}.  Fix a base $k$-point in $X \setminus x$, and point $X$ by its composite with the open immersion $X \setminus x \to X$.  Finally, since $d \geq 3$, we may appeal to \cite[Theorem 4.1]{ADExcision} to conclude that the morphism $\pi_1^{\AA^1}(X \setminus x) \to \pi_1^{\AA^1}(X)$ is an isomorphism.  Since $X$ is $\AA^1$-contractible, the latter sheaf is trivial and $X \setminus x$ is thus $\AA^1$-simply connected.
\end{proof}

\begin{rem}
If $k$ is a field, and $X$ is furthermore affine, then $X \setminus x$ is isomorphic to $\AA^d \setminus 0$ if and only if $X$ is isomorphic to $\AA^d$.  Indeed, if there exists an isomorphism from $X \setminus x$ to $\AA^d \setminus 0$, then normality of $X$ allows one to extend this isomorphism to an isomorphism of $X$ with $\AA^d$; the other implication is immediate.
\end{rem}

\begin{ex}
If $X$ is the smooth affine threefold defined by $x + x^2y + z^2 + t^3 = 0$, then the main result of \cite{DuboulozFasel} implies that $X$ is $\AA^1$-contractible, at least if $k$ is an infinite field.  In that case, for any extension $L/k$, B. Antieau observed that $X$ is connected by chains of affine lines (see \cite[Example 2.28]{DuboulozPauliOstvaer} for a proof).  If $L$ is an infinite field, we may always assume our chains avoid a codimension $\geq 2$ subset, and in particular it follows that $X \setminus x$ is connected by chains of affine lines; it follows that, $X \setminus x$ is $\AA^1$-connected.  Thus, if $k$ is infinite and perfect, $X \setminus x$ satisfies the hypotheses of the proposition and yields an ``exotic motivic sphere".  In dimension $d \geq 4$, the examples in \cite{ADContractible} or \cite{AsokDoranFasel} also satisfy the hypotheses of the theorem, at least over an infinite base field.
\end{ex}


\subsubsection*{Gysin and Wang sequences for homotopy modules}
Our goal now is to analyze the homotopy Leray spectral sequence for $f: X \to B$ a smooth morphism where $B$ is an $\AA^1$-homology sphere in the sense above; the outcome will be a version of the Gysin sequence.  We begin by observing that from the appropriate cohomological standpoint, $\AA^1$-homology spheres behave in a fashion analogous to spheres in classical homotopy theory, i.e., cohomology with ``locally constant coefficients" is concentrated in precisely $2$ degrees.  To make this precise, assume we work over a field, and let $\delta$ be the usual dimension function relative to $k$.
We consider homotopy modules $M$ over $k$ such that $f_*M$ is {\em $k$-simple} in the sense of Definition~\ref{df:simplehomotopymodule}.

\begin{lm}
\label{lm:vanishing}
Assume $k$ is a field, and $X$ is a (pointed) smooth $k$-scheme that is an $\AA^1$-homology sphere as in Definition \ref{df:aonehomologysphere}, with $p,q \geq 1$.
 Let $f:X \rightarrow \spec(k)$ be the canonical projection. For any homotopy module $M$ over $k$, $M_X:=f^*M$,
\[
H^i_{\delta}(f_*M_X) = \begin{cases}M &\text{if } i = 0 \\ M_{-q} &\text{ if } i = p-r \\ 0 & \text{ otherwise.}\end{cases}
\]
\end{lm}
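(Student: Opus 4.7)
The base $B = \spec k$ has a unique point, of $\delta$-dimension $0$, so by the conservativity of the fiber $\delta$-cohomology functor recalled in Remark~\ref{rem:Rost_modules} it suffices to compute $\hat H^i_\delta(f_*M_X)_j(\spec k)$ for each $(i,j)\in\ZZ^2$. Since $f$ is smooth and $M_X = f^*M$, Proposition~\ref{prop:compute_E1_coh_Leray}(1) applied with $p=0$ identifies this with $H^i(X, M\gtw j) = \Hom_{\SH(k)}(\Sigma^\infty X_+, M\gtw j[i])$, where the last equality is the $f_\sharp \dashv f^*$ adjunction.

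Next, the basepoint $x_0:\spec k \to X$ retracts the unit $\un_k \to \Sigma^\infty X_+$ and yields a canonical splitting $\Sigma^\infty X_+ \simeq \un_k \oplus \Sigma^\infty X$. Stabilizing the hypothesized pointed $\AA^1$-weak equivalence $\Sigma^r X \sim \Sigma^p\GG^{\wedge q}$ and desuspending $r$ times in $\SH(k)$ gives $\Sigma^\infty X \simeq \un_k\gtw q[p-r]$, so that (after untwisting by $\gtw q$ on the second factor)
\[
H^i(X,M\gtw j) \;\simeq\; \Hom_{\SH(k)}(\un_k,M\gtw j[i]) \;\oplus\; \Hom_{\SH(k)}(\un_k,M\gtw{j-q}[i-(p-r)]).
\]

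To finish, I invoke that $M$ lies in the heart of the $\delta$-homotopy $t$-structure over $k$ (with $\delta(\spec k) = 0$): its fiber $\delta$-cohomology at $\spec k$ is concentrated in degree $0$ and records the graded components of $M$, which translates into the identity $\Hom_{\SH(k)}(\un_k, M\gtw a[b]) = M_a(k)$ if $b=0$ and $0$ otherwise. Applying this to the two summands, the first contributes $M_j(k)$ precisely when $i=0$ and the second contributes $M_{j-q}(k) = (M_{-q})_j(k)$ precisely when $i=p-r$, with everything else vanishing. Reassembling these fibers via Remark~\ref{rem:Rost_modules} now yields $H^0_\delta(f_*M_X) \simeq M$, $H^{p-r}_\delta(f_*M_X) \simeq M_{-q}$, and $H^i_\delta(f_*M_X) = 0$ for all other $i$. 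The only genuinely delicate point is bookkeeping the interactions among simplicial suspension $[1]$, Tate twist $(1)$ and the $\GG$-twist $\gtw 1 = (1)[1]$ during the desuspension and untwisting steps; once the indexing is aligned, each individual step is formal.
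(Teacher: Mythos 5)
There is a genuine gap, and it sits exactly at the two places where you pass from a computation over a single field to a statement about homotopy modules. First, your opening reduction is based on a misreading of $\pts(\spec(k))$: in this paper a ``point'' of a scheme is any map $x:\spec(K) \to \spec(k)$ with $K/k$ finitely generated (see the Notations and conventions and Definition \ref{df:delta-hlg}), and the conservativity of Remark \ref{rem:Rost_modules} is conservativity with respect to \emph{all} such points, not just the unique scheme-theoretic point. A nonzero homotopy module over $k$ can perfectly well have vanishing (or unrepresentative) sections over $\spec(k)$ itself, so computing $\hat H^i_\delta(f_*M_X)_j$ only at the rational point proves nothing about $H^i_\delta(f_*M_X)$ as a sheaf. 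This is precisely why the paper's own proof takes an arbitrary $x:\spec(K)\to\spec(k)$, base-changes to $X_x/K$ and $M_x=x^!M$, and runs the suspension/splitting computation there; your computation does generalize verbatim to each such $K$ (the pointed stable equivalence $\Sigma^\infty X \simeq \un_k\gtw{q}[p-r]$ base-changes), but as written the reduction to the single point is unjustified.

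Second, the final step ``reassembling these fibers via Remark \ref{rem:Rost_modules}'' is not an operation that the remark provides. Conservativity tells you that a \emph{morphism} inducing isomorphisms on fiber $\delta$-homology at all points is an isomorphism, and that an object with vanishing fiber homology is zero; it does not reconstruct $H^0_\delta(f_*M_X)\simeq M$ or $H^{p-r}_\delta(f_*M_X)\simeq M_{-q}$ from a list of fiber values. You need comparison maps to feed into it: for the degree-$0$ part the unit $M \to f_*f^*M$ (this is how Proposition \ref{prop:degenerationwithcontractiblefibers}, which the paper's proof imitates, is structured), and for the degree-$(p-r)$ part a map coming from your chosen splitting. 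In fact, the cleanest repair uses your own splitting more seriously and bypasses fibers altogether: since $f$ is smooth, $f_*f^*M \simeq \uHom(\Sigma^\infty X_+, M)$, and $\Sigma^\infty X_+ \simeq \un_k \oplus \un_k\gtw{q}[p-r]$ gives a direct sum decomposition $f_*M_X \simeq M \oplus M_{-q}[-(p-r)]$ in $\SH(k)$; applying $H^i_\delta$ (using that $\gtw{1}$ is $t_\delta$-exact) then yields the lemma without any pointwise argument. Either fix the conservativity argument by working over all finitely generated extensions with an explicit comparison map, or switch to this direct splitting argument; as it stands the proof does not go through.
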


\begin{proof}
This result follows essentially immediately from \cite[Lemma 4.5]{AsokFaselSpheres}.  However, to keep the presentation self-contained, we sketch a proof in the spirit of this paper: one proceeds along the same lines as the proof of Proposition~\ref{prop:degenerationwithcontractiblefibers} and appeals to Proposition~\ref{prop:compute_E1_coh_Leray}(1).
Again we need only to compute fiber homology of $f_*(M_X)$.  To this end, take a pair $(x,n)$ where $x:\spec(K) \rightarrow \spec(k)$ is a induced by a field extension of finite type, and $n \in \ZZ$.  To simplify the notation, let us assume that $\delta(x)=0$; set $M_x=x^!M$.
 
We may then appeal to the computation of Proposition~\ref{prop:compute_E1_coh_Leray}(1):
\[
H^i_{\delta}(f_*M_X)_n(x) \cong H^{i}(X_x,M_x\{r\})=H^{i}(K,M_x\{n\})
 \oplus \tilde H^{i}(X_x,M_x\{n\}).
\]
As $X/k$ is pointed, $X_x/K$ is also pointed and we obtain an identification:
\[
H^{i}(X_x,M_x\{n\})=H^{i}(K,M_x\{n\}) \oplus \tilde H^{i}(X_x,M_x\{n\})
\]
where $\tilde H^*$ stands for the reduced cohomology of a pointed scheme.  Furthermore there are isomorphisms of the form:
\[
\tilde H^{i}(X_x,M_x\{n\})
 \cong \tilde{H}^{i+r}(\Sigma^r X_x,M_x)
 \cong \tilde{H}^{i+r}(\Sigma^p \mathbf{G}_{m}^{\wedge q},M_x)
 \cong \tilde H^{i+r-p}(\mathbf{G}_{m}^{\wedge q},M_x).
\]
Since $M_x$ is a homotopy module over $K$, the last group vanishes if $i+r-p$ is not equal to $0$ and is precisely $M_{-q}(\Spec k)$ if $i= p-r$.  The result then follows by unwinding the definitions.
\end{proof}

\begin{prop}
\label{prop:gysinsequenceI}
Assume $k$ is a field and $f: X \to B$ is a Zariski locally trivial smooth morphism of $k$-varieties with $B$ connected and where the fibers $F$ of $f$ are $\AA^1$-homology spheres (where $\Sigma^r F \sim \Sigma^p \mathbf{G}_m^{\wedge q}$ with $p,q \geq 1$).  Assume furthermore that $f$ trivializes on a Zariski open cover $\mathcal{U} = \{ U_i \}_{i \in I}$ of $B$, and that we may fix $x \in \cap_{i \in I} U_i(k)$ (i.e., the intersection is non-empty).  If $M$ is a homotopy module and $H^{j}_{\delta}(f_*M_X)$ is $k$-simple for each $i \geq 0$, then
\[
H^j_{\delta}(f_*M_X) = \begin{cases}M_B & \text{ if } j = 0 \\ (M_{-q})_B & \text{ if } j = p-r \\ 0 & \text{otherwise}.
\end{cases},
\]
and there is a long exact ``Gysin" sequence of the form:
\[
\cdots \longrightarrow H^i(B,M) \longrightarrow H^i(X,M) \longrightarrow H^{i + r - p}(B,M_{-q}) \stackrel{\partial}{\longrightarrow} H^{i+1}(B,M) \longrightarrow \cdots.
\]
\end{prop}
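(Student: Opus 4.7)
The strategy splits into two stages: first identify the homotopy modules $H^j_\delta(f_*M_X)$, then feed these into the cohomological homotopy Leray spectral sequence of Theorem~\ref{thm:deltahomotopylerayss} and extract the Gysin long exact sequence from its two-row shape.

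For the identification, write $\pi: B \to \Spec{k}$ for the structure morphism. The $k$-simplicity hypothesis produces, for each $j$, an isomorphism $H^j_\delta(f_*M_X) \cong \pi^! N_j$ for some $\T$-spectrum $N_j$ over $\Spec{k}$; since $\pi^!$ is $t_\delta$-exact (Paragraph~\ref{num:basic_t_exact}), each $N_j$ automatically lies in the heart. The $k$-point $x \in \bigcap_i U_i(k)$ furnished by the hypothesis provides a retraction $\pi\circ x = \mathrm{id}$, so $N_j \cong x^!\pi^! N_j \cong x^! H^j_\delta(f_*M_X) \cong H^j_\delta(x^!f_*M_X)$, using $t_\delta$-exactness of $x^!$ in the last step. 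Base change along the cartesian fiber square yields $x^!f_* \cong f_{x*}i_x^!$ (where $i_x: X_x \hookrightarrow X$ and $f_x: X_x \to \Spec{k}$), and smooth purity for $f$ gives $i_x^! f^* \cong f_x^* x^!$; combining, $x^!f_*M_X \cong f_{x*}f_x^*(x^!M_B)$. The hypothesis $x \in \bigcap_i U_i(k)$ ensures $X_x$ is (non-canonically) isomorphic to the pointed $\AA^1$-homology sphere $F$, so Lemma~\ref{lm:vanishing} applies to $f_x$ with coefficients $x^!M_B$ and produces
\[
N_j \cong \begin{cases} x^! M_B & j = 0, \\ (x^! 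M_B)_{-q} & j = p-r, \\ 0 & \text{otherwise.} \end{cases}
\]
Applying $\pi^!$, and noting that $\pi^! x^!$ is the identity on $k$-simple objects, yields the announced values $M_B$, $(M_{-q})_B$, and $0$.

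For the spectral sequence stage, substituting into Theorem~\ref{thm:deltahomotopylerayss} and using Proposition~\ref{prop:Gersten}(3) to identify $A^a_\delta(B,\mathcal{F}) \cong H^a(B,\mathcal{F})$ for $\mathcal{F}$ in the heart, we obtain
\[
E_2^{a,b}(f,M_X) \;=\; \begin{cases} H^a(B,M) & b = 0, \\ H^a(B,M_{-q}) & b = p-r, \\ 0 & \text{otherwise,} \end{cases} \qquad \Rightarrow \qquad H^{a+b}(X,M_X).
\]
With only two nonzero rows, at most one higher differential survives: when $p > r$ it is $d_{p-r+1}: E_{p-r+1}^{a,\,p-r} \to E_{p-r+1}^{a+p-r+1,\,0}$ (the case $p < r$ is handled symmetrically, and $p = r$ is trivially degenerate). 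Unwinding the filtration on the abutment assembles this into the standard two-row long exact sequence, which is precisely the claimed Gysin sequence with connecting map $\partial = d_{p-r+1}$.

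The main technical obstacle is verifying the base change $x^! f_* \cong f_{x*}i_x^!$ together with the smooth-purity identity $i_x^! f^* \cong f_x^* x^!$ in the abstract motivic six-functor setting, and then tracking the canonical identification $\pi^! x^! M_B \cong M_B$ used to pass back from $N_j$ to $M_B$ and $(M_{-q})_B$. The hypothesis that $x$ lies in the common intersection of the trivializing opens $U_i$ is essential: it realizes $X_x$ as the pointed $\AA^1$-homology sphere $F$, so Lemma~\ref{lm:vanishing} applies directly, with no need to keep track of monodromy or of discrepancies between the various trivializations.
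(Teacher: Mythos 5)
Your proof is correct at the level of rigor of the paper, but the key identification of $H^j_\delta(f_*M_X)$ is obtained by a genuinely different route. The paper proceeds by patching: it uses the unit map and $k$-simplicity to produce global comparison morphisms $M_B \to H^0_\delta(f_*M_X)$ and $(M_{-q})_B \to H^{p-r}_\delta(f_*M_X)$, then invokes the Zariski-local triviality together with Proposition~\ref{prop:trivial_fibration} to get isomorphisms over each $U_i$, and checks agreement on overlaps after restriction to the common rational point $x$. You instead exploit $k$-simplicity head-on: since $\pi\circ x=\mathrm{id}$ gives $x^!\pi^!\cong\mathrm{id}$, a $k$-simple homotopy module is recovered up to isomorphism from its $x^!$-fiber, and you compute that fiber via the exchange isomorphism $x^!f_*\cong f_{x*}i_x^!$ (the same $Ex(\Delta^!_*)$ used in Proposition~\ref{prop:trivial_fibration}), smooth purity $i_x^!f^*\cong f_x^*x^!$, $t_\delta$-exactness of $x^!$, and Lemma~\ref{lm:vanishing} on the fiber $X_x\cong F$. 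This is shorter and in fact uses the trivializing cover only to know that $X_x$ is the pointed homology sphere, so it proves the statement under slightly weaker hypotheses; what the paper's patching buys is that the identifications are induced by canonical maps (the unit $M_B\to f_*f^*M_B$ compatibly with restrictions), which is the form one wants when identifying edge maps and the multiplicative refinement of Theorem~\ref{thm:gysinsequenceII}. The spectral-sequence stage (two nonzero rows in Theorem~\ref{thm:deltahomotopylerayss}, via Proposition~\ref{prop:Gersten}(3)) is the same in both arguments. Two small points: the parenthetical claim that $N_j$ lies in the heart does not follow from $t_\delta$-exactness of $\pi^!$ alone --- you also need conservativity, which here follows from the section $x$ (if $\pi^!A=0$ then $A\cong x^!\pi^!A=0$), though your main line never actually uses this claim; and, like the paper, you leave the normalization of the pullbacks defining $M_B$ and $M_X$ (star versus shriek, which differ by a Thom twist when $B$ is not smooth or $M$ is not oriented) implicit, which is exactly where the identification $x^!M_B\cong M$ that feeds Lemma~\ref{lm:vanishing} has to be checked.
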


\begin{proof}
By Theorem~\ref{thm:deltahomotopylerayss}, there is a spectral sequence with
\[
E^{i,j}_{2} = A^i(B,H^j_{\delta}(f_*M_X)) \Longrightarrow H^{i+j}(X,M).
\]
Granted the first statement, the spectral sequence is concentrated in two rows, and the existence of the resulting long exact sequence is immediate.

Thus, it remains to prove the first statement.  If $B$ is a field, the computation of $H^j_{\delta}(f_*M_X)$ is simply Lemma~\ref{lm:vanishing}.  The general case reduces to that one using the simplicity assumption and a Zariski patching argument.  In more detail, by assumption, we may choose a $k$-point $x: \Spec k \to B$.  By Lemma~\ref{lm:vanishing} there are induced morphisms $M \to x^*H^0_{\delta}(f_*M_X)$ and $M_{-q} \to x^*H^{p-r}_{\delta}(f_*M_X)$.  Since $H^j_{\delta}(f_*M_X)$ is $k$-simple by assumption, pulling back these morphisms along the structure map $B \to \Spec k$ yields morphisms $M_B \to H^0_{\delta}(f_*M_X)$ and $(M_{-q})_B \to H^{p-r}_{\delta}(f_*M_X)$; moreover, these maps are compatible with restrictions to open sets $U \subset B$.  The vanishing statement for $H^j_{\delta}(f_*M_X)$ will be established in an identical manner.  Since $f$ is Zariski locally trivial, upon fixing a trivialization over the open cover $\mathcal{U}$, appeal to Proposition~\ref{prop:trivial_fibration} yields isomorphisms $M_{U_i} \stackrel{\sim}{\to} H^0_{\delta}(f_*M_X)|_{U_i}$ for any $i \in I$, and similarly isomorphisms of the form $(M_{-q})_{U_i} \stackrel{\sim}{\to} H^{p-r}_{\delta}(f_*M_X)|_{U_i}$.  To check the maps of sheaves on $B$ are isomorphisms, it suffices to check that the isomorphisms upon restriction to $U_i$ just described are compatible on $2$-fold intersections.  However, since the all the sheaves in question are $k$-simple, this compatibility may be checked after pullback along $x$, where it is immediate.
\end{proof}

\begin{rem}
Proposition~\ref{prop:gysinsequenceI} avoids the ``degenerate" case where $f: X \to B$ is a $\mathbf{G}_m$-torsor.  Such morphisms are, of course, always Zariski locally trivial, but behave like covering spaces in topology.  Indeed, under the additional hypotheses in the statement, the sheaf $H^j_{\delta}(f_*M_X)$ is only non-vanishing when $j = 0$, in which case it is isomorphic to $M_B \oplus (M_{-1})_B$.   
\end{rem}

Proposition~\ref{prop:gysinsequenceI} admits a refinement when $M$ is a homotopy module that admits a ring structure (e.g., $K^M_*$ or $K^{MW}_*$).  In such cases, $H^0(B,M)$ is a ring, and we may fix a generator $\xi$ of $H^{0}(B,f_*M_{X,q}) \cong H^0(B,M)$ as a module over this ring.  The class $\partial(\xi)$ then determines an element of $H^{p-r+1}(B,M_{q})$ that we will refer to as the Euler class of $f$. 

\begin{thm}
\label{thm:gysinsequenceII}
Assume $k$ is a field and $f: X \to B$ is a Zariski locally trivial smooth morphism of $k$-varieties with $B$ connected and where the fibers $F$ of $f$ are $\AA^1$-homology spheres (where $\Sigma^r F \sim \Sigma^p \mathbf{G}_m^{\wedge q}$ with $p,q \geq 1$).  Assume furthermore that $f$ trivializes on a Zariski open cover $\mathcal{U} = \{ U_i \}_{i \in I}$ of $B$, and that we may fix $x \in \cap_{i \in I} U_i(k)$ (i.e., the intersection is non-empty).  Suppose $M$ is a homotopy module that admits a ring structure.  If $H^{j}_{\delta}(f_*M_X)$ is $k$-simple for each $i \geq 0$, then a choice of generator $\xi$ of $H^{0}(B,f_*M_{X,q}) \cong H^0(B,M)$ determines an Euler class $e(f) \in H^{p-r+1}(B,M_{q})$, and the exact sequence of Proposition~\ref{prop:gysinsequenceI} takes the form:
\[
\cdots \longrightarrow H^i(B,M) \longrightarrow H^i(X,M) \longrightarrow H^{i + r - p}(B,M_{-q}) \stackrel{\cdot e(f)}{\longrightarrow} H^{i+1}(B,M) \longrightarrow \cdots,
\]
where the connecting homomorphism is given by product with $e(f)$ arising from the ring structure.
\end{thm}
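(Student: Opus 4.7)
The plan is to exploit the multiplicative structure of the homotopy Leray spectral sequence from Theorem~\ref{thm:deltahomotopylerayss}, combined with the collapse pattern already established in Proposition~\ref{prop:gysinsequenceI}. Once the spectral sequence is known to have a single non-trivial differential, the Leibniz rule reduces the computation of that differential on the entire top row to its value on a single class, which is precisely the content of the theorem.

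Setting $N := p-r$, I would first invoke Theorem~\ref{thm:deltahomotopylerayss} to endow each page $E_s$ with the structure of a differential bigraded algebra whose $E_2$-product is induced by the Gersten pairing of Example~\ref{ex:products_Gersten&d-homology}. Under the $k$-simplicity identifications of Proposition~\ref{prop:gysinsequenceI}, $E_2^{i,0} \cong H^i(B,M)$ and $E_2^{i,N} \cong H^i(B, M_{-q})$, this product has to be identified with the natural $H^*(B,M)$-module structure on $H^*(B,M_{-q})$ coming from $M \otimes M_{-q} \to M_{-q}$ and with the pairing $H^*(B, M_{-q}) \otimes H^*(B, M_q) \to H^*(B, M)$ coming from $M_{-q} \otimes M_q \to M$. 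This compatibility is the tautological outcome of unraveling the Gersten pairing applied to the truncation functors and the lax monoidality of $f_*$, but it requires careful bookkeeping of the $\mathbf{G}_m$-grading inherent to homotopy modules.

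Next, the $E_2$-page being concentrated on rows $0$ and $N$ forces $d_s = 0$ for $2 \le s \le N$ and for $s > N+1$ by column/row counting, so $E_2 = \cdots = E_{N+1}$ and the only non-trivial differential is $d_{N+1} \colon E_{N+1}^{i,N} \to E_{N+1}^{i+N+1,0}$. The abutment filtration of such a two-row spectral sequence then produces precisely the Gysin long exact sequence of Proposition~\ref{prop:gysinsequenceI}, with connecting map equal to $d_{N+1}$. Setting $e(f) := d_{N+1}(\xi) \in H^{N+1}(B, M_q)$ -- the $\mathbf{G}_m$-twist by $q$ being read off the weight of $\xi$ through Lemma~\ref{lm:vanishing} -- the Leibniz identity on page $N+1$ yields, for every $\beta \in E_{N+1}^{i,0}$,
\[
d_{N+1}(\xi \cdot \beta) = d_{N+1}(\xi) \cdot \beta + (-1)^N \xi \cdot d_{N+1}(\beta) = e(f) \cdot \beta,
\]
since every differential out of the bottom row vanishes for degree reasons.

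Finally, I would use the hypothesis that $\xi$ is a generator of $H^0(B, f_*M_{X,q}) \cong H^0(B, M)$ to argue that multiplication by $\xi$ gives an isomorphism of $H^*(B,M)$-modules $H^*(B,M) \xrightarrow{\sim} H^*(B, M_{-q})$; equivalently, $\xi$ trivializes $(M_{-q})_B$ as a free $M_B$-module of rank one. Combined with the Leibniz identity above, this identifies the connecting map of the Gysin sequence with the map $\alpha \mapsto \alpha \cdot e(f)$ using the pairing $M_{-q} \otimes M_q \to M$, as asserted. The main technical obstacle will be the careful bookkeeping of the $\mathbf{G}_m$-grading through the identifications, and verifying the compatibility between the $E_2$-product, the Gersten pairing of Paragraph~\ref{num:products_Gesten_hlg}, and the module structures on cohomology of $B$ with coefficients in $M$ and $M_{\pm q}$; once this compatibility is in hand, the Leibniz deduction is essentially formal.
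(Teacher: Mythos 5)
Your proposal is correct and takes essentially the same route as the paper: the paper's proof is a one-line appeal to the multiplicative structure of the homotopy Leray spectral sequence (Theorem \ref{thm:deltahomotopylerayss}) together with the two-row collapse and $k$-simplicity identifications of Proposition \ref{prop:gysinsequenceI}, defining $e(f)=\partial(\xi)$ and letting the Leibniz rule identify the connecting map with multiplication by $e(f)$, exactly as you do. Your write-up merely spells out the details (the vanishing of all differentials except $d_{N+1}$, the identification of the top row via pairing with $\xi$, and the $\GG$-twist bookkeeping) that the paper leaves implicit.
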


\begin{proof}
Granted the identifications mentioned before the statement, the result follows from the existence of multiplicative structure in the homotopy Leray spectral sequence as discussed in Paragraph \ref{num:spectral_diagrams&products}.
\end{proof}

\begin{prop}
Assume $k$ is a field, and $f: X \to B$ is a smooth morphism where $B$ is an $\AA^1$-homology sphere (where $\Sigma^r X \sim \Sigma^p \mathbf{G}_m^{\wedge q}$ with $p,q \geq 1$).  If $M$ is a homotopy module such that $H^j_{\delta}(f_*M)$ is $k$-simple for each $j \geq 0$, then there is a long exact ``Wang" sequence of the form:
\[
\cdots \longrightarrow H^{i}(X,M) \longrightarrow H^{i}_{\delta}(f_*M)(k) \stackrel{\theta}{\longrightarrow} H^{i+1+r-p}_{\delta}(f_*M)_{-p+r}(k) \longrightarrow H^{i+1}(X,M) \longrightarrow \cdots,
\]
where the map $\theta$ is a graded derivation.
\end{prop}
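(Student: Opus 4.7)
Apply Theorem~\ref{thm:deltahomotopylerayss} to $f$ and $M$, obtaining the cohomological homotopy Leray spectral sequence
\begin{equation*}
E_2^{a,b} = A^a_\delta(B, H^b_\delta(f_*M)) \Longrightarrow H^{a+b}(X,M).
\end{equation*}
The strategy is to show that the $E_2$-page is concentrated in exactly two columns, at $a = 0$ and $a = p-r$, which forces the spectral sequence to collapse onto a single nontrivial differential and hence yields the Wang long exact sequence.

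For the two-column concentration, set $\mathcal{F}_b := H^b_\delta(f_*M)$ and use $k$-simplicity to write $\mathcal{F}_b \simeq p^!N_b$, where $p: B \to \Spec k$ is the smooth structure morphism and $N_b$ is a homotopy module on $k$. Via Proposition~\ref{prop:Gersten}(3) together with the adjunction $(p_!, p^!)$,
\begin{equation*}
A^a_\delta(B, \mathcal{F}_b) \simeq H^a(B, p^!N_b) \simeq \Hom_{\T(k)}(p_!\un_B, N_b[a]).
\end{equation*}
Since $B$ is a pointed smooth $k$-scheme whose motive splits as $M(B) \simeq \un_k \oplus \tilde M(B)$ with $\tilde M(B) \simeq \mathbf{G}_m^{\wedge q}[p-r]$ in $\T(k)$ (from the sphere hypothesis $\Sigma^r B \sim \Sigma^p \mathbf{G}_m^{\wedge q}$), the smooth-morphism identity $p_!\un_B \simeq p_\sharp \Th(-T_{B/k})$ propagates this two-term splitting (up to a common Thom twist) to $p_!\un_B$. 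Combined with the fact that $\tilde H^*(\mathbf{G}_m^{\wedge q}, N)$ is concentrated in degree $0$ with value $N_{-q}$, this yields the vanishing of $E_2^{a,b}$ for $a \notin \{0, p-r\}$ along with the identifications $E_2^{0,b} \simeq H^b_\delta(f_*M)(k)$ and $E_2^{p-r, b} \simeq H^b_\delta(f_*M)_{-q}(k)$. The argument parallels that of Lemma~\ref{lm:vanishing}, applied here to the structure morphism $p$ rather than the total-space morphism $f$.

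With $E_2^{a,b}$ supported only in columns $a \in \{0, p-r\}$, bidegree considerations force the differentials $d_s$ for $s \neq p-r$ to vanish, so the only possibly nontrivial differential is $\theta := d_{p-r}: E_{p-r}^{0, i} \to E_{p-r}^{p-r, i+1-(p-r)}$. Hence $E_2 = E_{p-r}$ and $E_{p-r+1} = E_\infty$; note that the shift $1-(p-r) = 1+r-p$ matches the indexing in the statement. Splicing the short exact sequences $0 \to E_\infty^{p-r, i-(p-r)} \to H^i(X, M) \to E_\infty^{0, i} \to 0$ arising from the abutment filtration with the kernel-cokernel exact sequence of $\theta$ yields the stated Wang long exact sequence. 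When $M$ admits a ring structure, the multiplicative structure on the spectral sequence (Paragraph~\ref{num:spectral_diagrams&products}) endows every differential with the Leibniz rule, so $\theta$ is automatically a graded derivation. The main technical obstacle lies in the concentration step: tracking the Thom twist by $-T_{B/k}$ in $p_!\un_B$ and verifying that it acts compatibly on both summands of the sphere decomposition of $M(B)$, so as to produce precisely the $\mathbf{G}_m$-grading shift $N_{b,-q}$ appearing in the second column of $E_2$.
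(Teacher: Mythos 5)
Your proposal is correct and takes essentially the same route as the paper, whose proof just combines Theorem \ref{thm:deltahomotopylerayss} with Lemma \ref{lm:vanishing} (applied, via $k$-simplicity, to the structure morphism of the sphere base $B$) to obtain a two-column $E_2$-page and then invokes the multiplicative structure for the derivation claim --- your concentration step simply re-derives that lemma's computation (making explicit the Thom-twist bookkeeping the paper leaves implicit) rather than citing it. Note only that your second column carries the $\GG$-twist by $-q$, consistent with Lemma \ref{lm:vanishing} and the Gysin sequence (Proposition \ref{prop:gysinsequenceI}), so the twist $-p+r$ in the printed statement is presumably a typo rather than a gap in your argument.
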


\begin{proof}
This result follows immediately by combining Theorem~\ref{thm:deltahomotopylerayss} and Lemma~\ref{lm:vanishing}, which shows that the resulting spectral sequence is concentrated in two columns.
\end{proof}




\subsection{Relative cellular spaces}
\label{subsec:relativecellular}
The notion of ``algebraic cell decomposition" and the related notion of ``relative algebraic cell decomposition" has a long history.  E.g., if a split torus acts on a smooth projective scheme $X$ over a field, then Bialynicki--Birula \cite{BB} showed that $X$ may be decomposed as a disjoint union of smooth varieties that are total spaces of vector bundles over connected components of the fixed point loci (which are necessarily smooth).  The cohomological consequences of the existence of such filtrations were observed almost immediately (e.g., one immediately computes Chow groups for smooth projective varieties equipped with a torus action with isolated fixed points).

Karpenko was one of the first to exploit the existence of such algebro-geometric cell decompositions to produce motivic decompositions of varieties (\cite{Karpenko}), in his case absolute Chow motives (over a base field).  Such results have been developed in numerous directions, but we will mainly be concerned with the relative version studied in \cite[8.4.2]{MNP}.  We begin by introducing our own version of ``cellularity''.

\begin{df}
\label{df:cellularity}
Suppose $f:X \rightarrow B$ be a morphism of schemes.  Say that $f$ (or $X/B$) admits a flat (resp. lci) cellular structure if there exists a filtration:
\[
\emptyset=X_{-1} \subset X_0 \subset \hdots \subset X_n \subset X_{n+1}=X
\]
of closed subschemes of $X$ such that for all $\alpha$, the following conditions are satisfied:
\begin{enumerate}
\item[(a)] $f_\alpha=f|_{X_\alpha}:X_\alpha \rightarrow B$ is flat
 (resp. lci);
\item[(b)] setting $U_\alpha := X_\alpha-X_{\alpha-1}$, the restriction
 $p_\alpha=f|_{U\alpha}:U_\alpha \rightarrow S$ is a smooth morphism and stable $\AA^1$-weak equivalence (eg: see Remark \ref{rem:stable_A1_w_eq}).
\end{enumerate}
\end{df}

\begin{rem}
In Karpenko's definition, the morphisms $p_{\alpha}$ are assumed to be vector bundles, which is sufficient for his purposes since he essentially uses the method of Bialynicki-Birula, where the geometric decomposition arises from the action of a split torus on a smooth projective variety.  In contrast, our definition allows, e.g., cellular spaces where the ``cells" are merely $\AA^1$-contractible and we allow ourselves to consider varieties that are not necessarily projective.  Examples show that this additional generality is natural.  For example, the smooth affine quadrics $Q_{2n}$ of dimension $2n$ (the hypersurface in affine space defined by the equation $\sum_i x_iy_i = z(1-z)$) admit a decomposition as an affine space ($x_1 = \cdots = x_n = z = 0$) of dimension $n$, and an open complement that is a strictly quasi-affine $\AA^1$-contractible scheme \cite[Theorems 2.2.5 and 3.1.1]{AsokDoranFasel} (in particular the complement is an $\AA^1$-contractible variety that is not isomorphic to affine space).  Likewise, the Panin--Walter model of quaternionic projective space $\mathrm{HP}^n$ admits a ``cell decomposition" where the cells are strictly quasi-affine $\AA^1$-contractible schemes \cite[Theorem 3.1]{PaninWalterPontryaginClasses}.
\end{rem}

Since the morphism $p_\alpha$ is smooth, its relative dimension is a Zariski locally constant function on $U_\alpha$ that we denote by $d_\alpha$. In other words, $d_\alpha$ simply consists of integers for each connected component of $U_\alpha$.  To state the next results, we will use the following notation:
\begin{equation}\label{eq:non_conn_twist}
\un_B(d_\alpha)[2d_\alpha]
 =\bigoplus_{x \in U_\alpha^{(0)}} \un_B(d_\alpha(x))[2d_\alpha(x)].
\end{equation}

\begin{num}
Let us first recall the computation obtained in \cite[8.4.3]{MNP}, for Chow motives. We work over a quasi-projective base $B$ over a perfect field $k$. Taking into account the comparison result of Jin \cite[Th. 3.17]{Jin1}, the category of relative Chow motives defined by Corti-Hanamura corresponds to the weight $0$ part of the triangulated category of rational mixed motives $\DM(B,\QQ)$.

Given a projective morphism $f:X \rightarrow B$ such that $X$ is smooth over $k$, one defines the Chow motive (\emph{i.e.} cohomological motive) of $X/B$ as:
\[
h_B(X)=f_*(\un_X).
\]
With this notation, \cite[Prop. 8.4.3]{MNP} can be stated as follows.
\end{num}

\begin{prop}\label{prop:NMP}
Assume $f:X \rightarrow B$ is a projective morphism of $k$-varieties, with $X$ smooth and admitting a flat relative cellular structure.  There is a canonical isomorphism (using notation \eqref{eq:non_conn_twist}) of the form:
\[
h_B(X)=\bigoplus_\alpha \un_B(-d_\alpha)[-2d_\alpha].
\]
\end{prop}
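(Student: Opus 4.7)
The plan is to prove the statement by induction on the length $n$ of the cellular filtration, establishing at each stage that
\[
h_B(X_\alpha)\simeq \bigoplus_{\beta\leq \alpha}\un_B(-d_\beta)[-2d_\beta].
\]
First I would set up the inductive distinguished triangle. Each closed subscheme $X_\alpha\subset X$ is projective over $B$, so $f_\alpha:X_\alpha\to B$ satisfies $f_{\alpha*}=f_{\alpha!}$. Writing $\bar i_\alpha:X_{\alpha-1}\hookrightarrow X_\alpha$ for the closed immersion and $j_\alpha:U_\alpha\hookrightarrow X_\alpha$ for the complementary open immersion, applying $f_{\alpha!}$ to the localization distinguished triangle $j_{\alpha!}\un_{U_\alpha}\to \un_{X_\alpha}\to \bar i_{\alpha*}\un_{X_{\alpha-1}}\to$ yields
\[
p_{\alpha!}\un_{U_\alpha}\longrightarrow h_B(X_\alpha)\longrightarrow h_B(X_{\alpha-1})\stackrel{\partial_\alpha}{\longrightarrow} p_{\alpha!}\un_{U_\alpha}[1].
\]
Smooth purity applied to $p_\alpha$ of relative dimension $d_\alpha$ identifies $p_{\alpha!}\un_{U_\alpha}$ with $p_{\alpha\sharp}\un_{U_\alpha}(-d_\alpha)[-2d_\alpha]$, while the stable $\aone$-weak equivalence hypothesis on $p_\alpha$ (see Remark~\ref{rem:stable_A1_w_eq}) identifies $p_{\alpha\sharp}\un_{U_\alpha}$ with $\un_B$ in $\SH(B)$, hence also in $\DM(B,\QQ)$ by functoriality. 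Thus the leftmost term is $\un_B(-d_\alpha)[-2d_\alpha]$; the base case $\alpha=0$ (where $X_{-1}=\emptyset$) reads $h_B(X_0)=\un_B(-d_0)[-2d_0]$, and the induction hypothesis controls the right term for $\alpha>0$.

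Next I would show the triangle splits, i.e.\ that $\partial_\alpha$ vanishes. Under the induction hypothesis, $\partial_\alpha$ decomposes as a sum of components indexed by $\beta<\alpha$ lying in
\[
\Hom_{\DM(B,\QQ)}\bigl(\un_B(-d_\beta)[-2d_\beta],\,\un_B(-d_\alpha)[-2d_\alpha+1]\bigr)\simeq H^{2(d_\beta-d_\alpha)+1,\,d_\beta-d_\alpha}_M(B,\QQ).
\]
These are motivic cohomology groups of $B$ in bidegree $(p,q)$ with $p=2q+1$; when $B$ is smooth over $k$ they identify with Bloch's higher Chow groups $\mathrm{CH}^{q}(B,-1;\QQ)$, which vanish by convention. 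For singular $B$, the vanishing can be extracted either by cdh descent for $\DM(-,\QQ)$, reducing to the smooth case, or by constructing an explicit retraction of the first arrow via the refined fundamental class of the open stratum $U_\alpha$ in the sense of \cite{DJK}, using that $p_\alpha$ is smooth and $f_\alpha$ is flat.

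The main obstacle is this splitting step. The cohomological route is quick but transparent only when $B$ is smooth, whereas the geometric route via refined Gysin morphisms is intrinsic but demands a careful compatibility check between the fundamental class of $U_\alpha$ and the ambient filtration. Once the splitting is secured at each stage, telescoping the inductive decompositions yields $h_B(X)=h_B(X_n)\simeq \bigoplus_\alpha \un_B(-d_\alpha)[-2d_\alpha]$, as required.
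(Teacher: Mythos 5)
Your induction set-up (localization triangle, $f_{\alpha!}=f_{\alpha*}$ for the projective $f_\alpha$, smooth purity for $p_\alpha$ plus the stable $\AA^1$-equivalence to identify the open stratum's contribution with $\un_B(-d_\alpha)[-2d_\alpha]$) is fine and matches the mechanism the paper uses for its lci variant. The genuine gap is the splitting step. The proposition, following the setting of \cite[8.4.3]{MNP} recalled just before it, only assumes $B$ quasi-projective over $k$: $X$ is smooth, but $B$ may be singular. Your main route reduces $\partial_\alpha=0$ to the vanishing of $\Hom_{\DM(B,\QQ)}(\un_B,\un_B(q)[2q+1])$ with $q=d_\beta-d_\alpha$, i.e.\ of cdh-motivic cohomology of $B$ in bidegree $(2q+1,q)$. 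For smooth $B$ these groups do vanish, but for singular $B$ they do not: already for two cells of equal dimension ($q=0$) the relevant group is $H^1_{\cdh}(B,\QQ)$, which is nonzero for, e.g., a nodal curve. The proposed rescue ``by cdh descent, reducing to the smooth case'' is not an argument: cdh descent only expresses these groups through blow-up long exact sequences of smooth schemes, and that is exactly how the nonzero classes arise; there is no vanishing of $H^{2q+1,q}$ beyond the smooth case. So your argument as written proves the statement only under the extra hypothesis that $B$ is smooth, and even then it yields \emph{some} splitting rather than the asserted canonical isomorphism, since the splittings form a torsor under groups such as $\Hom(\un_B(-d_\beta)[-2d_\beta],\un_B(-d_\alpha)[-2d_\alpha])\supseteq H^{0,0}(B,\QQ)\neq 0$ when $d_\beta=d_\alpha$.

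Your second fallback --- an explicit retraction built from Gysin/fundamental-class morphisms for $p_\alpha$ and $f_\alpha$ --- is where all the content lies, and it is the route actually taken in the literature and in this paper: this is precisely why condition (a) of Definition \ref{df:cellularity} requires $f_\alpha$ to be flat (resp.\ lci), a hypothesis your vanishing route never uses, which is a warning sign. In \cite{MNP} the splitting is produced with flat pullback of relative correspondences; in this paper, Proposition \ref{prop:cellular_decomposition} splits $j_{\alpha*}$ by $p_{\alpha*}^{-1}f_{\alpha*}$ using the fundamental classes of (P2)--(P4), but those are only available for quasi-projective lci morphisms in \cite{Deg16,DJK} --- the remark following the corollary explicitly notes that covering the flat case would require an extension to finite Tor-dimension morphisms along the lines of \cite{JinGth}. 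For Proposition \ref{prop:NMP} itself the paper gives no direct proof: it is quoted from \cite[8.4.3]{MNP} and transported into $\DM(B,\QQ)$ via Jin's comparison theorem \cite[Th. 3.17]{Jin1}. So to repair your proof you must either add the hypothesis that $B$ is smooth (or replace flat by lci, in which case you are reproving Proposition \ref{prop:cellular_decomposition}), or actually construct and check the flat-pullback retraction you allude to; at present that is exactly the missing step.
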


Note \cite[8.4.3]{MNP} is stated for $k=\mathbb C$ but this assumption is not used in the proof, which works over an arbitrary base field $k$.  We appeal to the assumption $k$ perfect implicitly via the comparison result (i.e., \cite{Jin1}).

\begin{num}
We can actually extend the previous computation to $\AA^1$-homotopy, if one restricts to \emph{oriented spectra}. Let us adopt the following definition.  We will say that the abstract triangulated motivic category $\T$ is \emph{oriented}
 if:
\begin{itemize}
\item There exists a premotivic adjunction $\varphi^*:\SH \leftrightarrows \T:\varphi_*$;
\item The ring spectrum $\varphi_*(\un_S)$ is oriented.
\end{itemize}
These assumptions in place, we will use the theory of Borel-Moore objects as described in \cite[section 1.3]{BD1}, together with the theory of fundamental classes as developed in \cite{Deg16} (see also \cite{DJK} for a more general account).

For the convenience of the reader, we recall the definitions and properties we use.  Given any separated $B$-scheme of finite type $\pi:Y \rightarrow B$, we define its \emph{Borel-Moore $\T$-motive}\footnote{Another appropriate terminology would be cohomological $\T$-motive with compact support.} by the formula:
\[
\Mbm(Y/B):=\pi_!(\un_Y).
\]
We will need the following properties:
\begin{enumerate}
\item[(P1)] According to the localization property of triangulated motivic categories,
 given any closed immersion $i:Z \rightarrow X$ of $B$-schemes with complementary
 open immersion $j:U \rightarrow X$, one obtains a distinguished triangle:
\[
\Mbm(U/B) \xrightarrow{j_*} \Mbm(X/B)
 \xrightarrow{i^*} \Mbm(Z/B) \rightarrow \Mbm(U/B)[1].
\]
\item[(P2)] Using the main construction of \cite[Th. 2.5.3]{Deg16}, one associates
 to any quasi-projective morphism $f:Y \rightarrow X$ of relative dimension $d$,
 a morphism (also called a fundamental class in \emph{loc. cit.}):
\[
f_!:\Mbm(Y/X)=f_!(\un_Y) \rightarrow \un_X(-d)[-2d].
\]
\item[(P3)] If in addition to the assumptions of the previous point,
 one suppose that $f$ is smooth, then the following diagram is commutative:
\[
\xymatrix@R=8pt@C=50pt{
f_!(\un_Y)\ar_{\mathfrak p_f}^\sim[dd]\ar^{f_!}[rd] & \\
 & \un_X(-d)[-2d] \\
f_\sharp(\un_Y)(-d)[-2d]\ar_{\quad f_*(-d)[-2d]}[ru] &
}
\]
where $\mathfrak p_f$ is the purity isomorphism associated with the smooth
 morphism $f$ in the six functors formalism,
 $f_\sharp(\un_Y)$ is the $\T$-motive represented by $Y/B$,
 and $f_*$ refers to the classical functoriality.
 Note in particular that $f_*$ is the image of the canonical
 map $\Sigma^\infty Y_+ \rightarrow \Sigma^\infty B_+$ of spectra over $B$
 under the map $\varphi^*:\SH(B) \rightarrow \T(B)$.
\item[(P4)] If under the assumptions of point (2),
 $X$ is separated of finite type over a scheme $B$, with structural morphism $p$,
 we obtain by applying $p_!$ a Gysin morphism:
\[
f_!:\Mbm(Y/B) \rightarrow \Mbm(X/B)(-d)[-2d].
\]
It follows from the compatibility of fundamental classes with composition
 \cite[2.5.3]{Deg16} that these Gysin morphism are compatible with composition.
\end{enumerate}
\end{num}
\begin{prop}\label{prop:cellular_decomposition}
We consider a morphism $f:X \rightarrow B$ with an lci cellular structure,
 and $B$ an arbitrary scheme.

Then, using notation \eqref{eq:non_conn_twist},
 there exists a canonical isomorphism in $\MGLmod(B)$:
$$
f_!(\un_X)=\bigoplus_\alpha \un_B(-d_\alpha)[-2d_\alpha]
$$
\end{prop}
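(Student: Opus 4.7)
The plan is to proceed by induction on the filtration index $\alpha$, showing at each stage that $\Mbm(X_\alpha/B) \cong \bigoplus_{\beta \leq \alpha} \un_B(-d_\beta)[-2d_\beta]$ in $\MGLmod(B)$. The base case $\alpha = 0$ is immediate from property (P3): since $p_0 \colon X_0 = U_0 \to B$ is smooth of relative dimension $d_0$ and a stable $\AA^1$-weak equivalence, the purity isomorphism identifies $p_{0!}(\un_{X_0})$ with $p_{0\sharp}(\un_{X_0})(-d_0)[-2d_0]$, and the stable $\AA^1$-equivalence hypothesis --- preserved by the functor $\varphi^* \colon \SH(B) \to \MGLmod(B)$, since it is a functor of triangulated categories --- gives $p_{0\sharp}(\un_{X_0}) \cong \un_B$.

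For the inductive step, the localization triangle (P1) for the closed inclusion $X_{\alpha-1} \hookrightarrow X_\alpha$ with open complement $U_\alpha$ reads
\[
\Mbm(U_\alpha/B) \xrightarrow{\;j_{\alpha *}\;} \Mbm(X_\alpha/B) \xrightarrow{\;i_{\alpha-1}^*\;} \Mbm(X_{\alpha-1}/B) \xrightarrow{+1}.
\]
The same argument as the base case (applied to $p_\alpha$) identifies the left-hand vertex with $\un_B(-d_\alpha)[-2d_\alpha]$, using the convention \eqref{eq:non_conn_twist} when $U_\alpha$ is disconnected, and the inductive hypothesis handles the right-hand vertex. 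To split this triangle, I would build a retraction of $j_{\alpha*}$ out of the fundamental class of the lci morphism $f_\alpha$. Assuming that $U_\alpha$ is connected, property (P2) applied to $f_\alpha$, combined with (P4) to push forward to $B$, supplies a fundamental class morphism $f_{\alpha!} \colon \Mbm(X_\alpha/B) \to \un_B(-d_\alpha)[-2d_\alpha]$. The open immersion $j_\alpha$ is smooth of relative dimension $0$, and by (P3) its fundamental class is the canonical map $j_{\alpha\sharp}(\un_{U_\alpha}) \to \un_{X_\alpha}$, which pushed to $B$ is exactly the map $j_{\alpha*}$ of the localization triangle. Compatibility of fundamental classes with composition, applied to $p_\alpha = f_\alpha \circ j_\alpha$, then yields $f_{\alpha!} \circ j_{\alpha*} = p_{\alpha!}$; and $p_{\alpha!}$ is precisely the canonical identification from the base-case argument. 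Hence $f_{\alpha!}$ retracts $j_{\alpha*}$, the triangle splits, and the induction closes.

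The main technical obstacle is the disconnected case: when $U_\alpha$ has several connected components of differing dimensions, $f_\alpha$ is lci but not of constant relative dimension, so (P2) does not apply verbatim. I would handle this by refining the filtration so as to insert one connected component of $U_\alpha$ at a time; the delicate point to verify is that each refined stratum still satisfies the lci condition (a) of Definition \ref{df:cellularity}, while the smoothness and stable $\AA^1$-equivalence of the new open pieces are automatic from those of $p_\alpha$. Once this refinement is in place, the connected-case argument applies stratum by stratum, and the resulting summands assemble to give the decomposition under the convention of \eqref{eq:non_conn_twist}.
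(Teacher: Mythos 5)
Your main line of argument is precisely the paper's proof: the localization triangle (P1), the identification of $\Mbm(U_\alpha/B)$ with $\un_B(-d_\alpha)[-2d_\alpha]$ using (P3) and the hypothesis that $p_\alpha$ is a smooth stable $\AA^1$-weak equivalence, and the splitting of $j_{\alpha*}$ by the Gysin map $f_{\alpha!}$ of the lci morphism $f_\alpha$ via compatibility of fundamental classes with composition, so that $f_{\alpha!}\circ j_{\alpha*}=p_{\alpha!}$ is invertible; the paper splits the triangle with exactly this retraction ($p_{\alpha*}^{-1}f_{\alpha*}$ in its notation) and concludes by the same induction on $\alpha$.

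Where you depart from the paper is the treatment of non-constant relative dimension, and that is the one genuine weakness in your write-up. Refining the filtration so as to insert one connected component $C$ of $U_\alpha$ at a time means introducing closed subschemes of the form $X_{\alpha-1}\cup\overline{C}$, and there is no reason condition (a) of Definition \ref{df:cellularity} survives this operation: being lci over $B$ is not inherited by such closures and unions, and you acknowledge that you have not verified it, so as written the general case is not established. The detour is also unnecessary. The fundamental classes of \cite{Deg16} and \cite{DJK} underlying (P2) and (P4) are defined for quasi-projective (smoothable) lci morphisms with twist by the Thom space of the cotangent complex, whose virtual rank is merely locally constant; the convention \eqref{eq:non_conn_twist} exists precisely to encode this, so $f_{\alpha!}$ and $p_{\alpha!}$ make sense with target $\un_B(-d_\alpha)[-2d_\alpha]$ in that sense and your connected-case argument applies verbatim (equivalently, decompose $\Mbm(U_\alpha/B)$ along the components of $U_\alpha$ and argue componentwise). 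Note that this reading of (P2) is needed even when $U_\alpha$ is connected, since $X_\alpha$ may have components contained in $X_{\alpha-1}$ on which the virtual dimension of $f_\alpha$ differs. With that replacement your proof coincides with the paper's.
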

\begin{proof}
We look at the localization triangle of property (P1)
 associated with the closed immersion
 $i_\alpha:X_{\alpha}:X_{\alpha-1} \rightarrow X_{\alpha}$
 with complementary open immersion $j_\alpha:U_\alpha \rightarrow X_\alpha$:
$$
\Mbm(U_\alpha/B) \xrightarrow{j_{\alpha*}} \Mbm(X_\alpha/B)
 \xrightarrow{i^*_\alpha} \Mbm(X_{\alpha-1}/B) \rightarrow \Mbm(U_\alpha/B)[1].
$$
Next we apply property (P4) to get a commutative diagram:
$$
\xymatrix@R=22pt@C=-5pt{
\Mbm(U_\alpha/B)\ar_/-5pt/{p_{\alpha*}}[rd]\ar^{j_{\alpha*}}[rr]
 && \Mbm(X_\alpha/B)\ar^/-8pt/{f_{\alpha*}}[ld] \\
& \Mbm(B/B)(-d_\alpha)[-2d_\alpha] &
}
$$
Point (P3) applied to the smooth morphism $p_\alpha$,
 which by assumption is a stable weak $\AA^1$-equivalence,
 implies the map $p_{\alpha*}$ is an isomorphism.
 In particular, $j_{\alpha*}$ is a split monomorphism
 (with spliting $p_{\alpha*}^{-1}f_{\alpha*}$) and one gets an isomorphism
 $\Mbm(U_\alpha/B)=\un_B(-d_\alpha)[-2d_\alpha]$.

This completes the proof.
\end{proof}

\begin{cor}
Assume that $f:X \rightarrow B$ is a projective morphism with lci cellular structure.
\begin{enumerate}
\item Then one gets an isomorphism of the following \emph{cohomological $\T$-motives}:
$$
f_*(\un_X)=\bigoplus_\alpha \un_B(-d_\alpha)[-2d_\alpha].
$$
In particular, one gets an isomorphism of $\T$-cohomology:
$$
H^{**}(X,\un_X) \simeq \bigoplus_\alpha H^{*-2d_\alpha,*-d_\alpha}(B,\un_B)
$$
\item For any $\T$-spectrum $\E$ over $B$, there exists a canonical isomorphism:
$$
f_*f^*(\E) \simeq \bigoplus_\alpha \un_B(-d_\alpha)[-2d_\alpha].
$$
\end{enumerate}
\end{cor}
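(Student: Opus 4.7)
The plan is to deduce this corollary as a formal consequence of Proposition~\ref{prop:cellular_decomposition}. First I would observe that since $f: X \to B$ is projective, it is in particular proper, so the canonical comparison map gives a natural isomorphism $f_! \simeq f_*$ of functors. Applying this to the identification
\[
f_!(\un_X) \simeq \bigoplus_\alpha \un_B(-d_\alpha)[-2d_\alpha]
\]
provided by Proposition~\ref{prop:cellular_decomposition} immediately yields the first isomorphism of point (1). Note that no hypothesis on the base field is required (unlike in Proposition~\ref{prop:NMP}, which uses the comparison with Corti-Hanamura motives and hence implicitly requires $k$ perfect).

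For the cohomology statement in point (1), I would use that $f^*$ is monoidal so $\un_X = f^*\un_B$, and then the $(f^*,f_*)$ adjunction gives
\[
H^{n,i}(X,\un_X) = \Hom_{\T(X)}(\un_X, \un_X(i)[n]) = \Hom_{\T(X)}(f^*\un_B, \un_X(i)[n]) = \Hom_{\T(B)}(\un_B, f_*\un_X(i)[n]).
\]
Substituting the identification of $f_*\un_X$ just obtained and using that $\Hom_{\T(B)}(\un_B,-)$ commutes with finite direct sums, one obtains the desired decomposition.

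For point (2), the natural approach is to invoke the projection formula, which holds because $f$ is proper (equivalently because $f_! = f_*$ and the general projection formula for $(f_!,f^*)$): one has a canonical isomorphism $f_*(f^*\E \otimes \F) \simeq \E \otimes f_*\F$. Specializing to $\F = \un_X$ gives
\[
f_*f^*(\E) \simeq \E \otimes f_*\un_X \simeq \bigoplus_\alpha \E(-d_\alpha)[-2d_\alpha],
\]
using the first part of the corollary and the fact that Tate twist and shift commute with tensoring (and with finite direct sums). I note that the statement as printed omits the factor of $\E$ on the right-hand side, which appears to be a typographical slip; the natural and correct formulation is the one above, recovering the cohomology statement in (1) by further applying $\Hom_{\T(B)}(\un_B,-)$. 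There is no substantial obstacle here: the whole argument is formal manipulation within the six functors formalism, and the geometric content is entirely concentrated in Proposition~\ref{prop:cellular_decomposition}.
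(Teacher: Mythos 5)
Your argument is essentially the paper's own proof: the paper also settles (2) by the projection formula, writing $f_*f^*(\E) \simeq f_!(\un_X \otimes f^*\E) \simeq f_!(\un_X)\otimes \E$ and then invoking Proposition~\ref{prop:cellular_decomposition}, with (1) treated as immediate from $f_!\simeq f_*$ for the proper map $f$ and the usual adjunction for the cohomology statement. Your observation that the displayed formula in (2) is missing the factor $\E$ (i.e.\ should read $\bigoplus_\alpha \E(-d_\alpha)[-2d_\alpha]$) is confirmed by the paper's own proof, so that remark is a correct catch rather than a gap.
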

Only the second point needs a proof.
 One simply uses the preceding proposition and
 the projection formula from the six functors formalism:
$$
f_*f^*(\E) \simeq f_!(\un_X \otimes f^*(\E)) \simeq f_!(\un_X) \otimes \E.
$$

\begin{ex}
\begin{enumerate}
\item In the motivic case, one gets back the result of Proposition \ref{prop:NMP}.
 In fact, with rational coefficients, we have shown that one can get rid of a base field,
 if one replaces the flatness by lci. (See also next Remark.)
\item We can apply the preceding result to classical oriented ring spectra,
 such as $\MGL$ and $\KGL$, by using the theory of modules over ring spectra
 (see \cite[\textsection 7]{CD3}). The corollary extends a previously known
 result in \cite{NZ}, from characteristic $0$ to the absolute case
 (without the need of a base field).
\end{enumerate}
\end{ex}

\begin{rem}
\begin{enumerate}
\item The above corollary applies to strict $\MGL$-modules $M$ over $B$.
 That is the $\MGL$-module structure on $M$ must be defined at
 the model category level. This assumption is too strong in many situations.
One can avoid it, assuming only that $M$ is a spectrum in $\SH(B)$
 with a module structure over $\MGL_B$.
 Indeed, it is possible to use the proof of Proposition \ref{prop:cellular_decomposition}
 after replacing $f_!(\un_X)=f_!f^*(\un_X)$ with the spectrum $f_!f^*(M)$.
 One uses the theory of fundamental classes as developed in \cite{DJK} and the fact
 that one has Thom isomorphisms for $M$, using the $\MGL_B$-module structure.

We leave the details to the reader.
\item The orientation assumptions in the preceding proposition and corollary
 is essential. Indeed, recall that the Chow-Witt ring
 does not satisfy the projective bundle formula though it is representable in $\SH$
 by the Milnor-Witt homotopy module.

Nevertheless, one can extend the validity of the above result by considering weaker
 orientability conditions. For example,
 we can consider a symplectically oriented (ring) spectrum $M$
 (see \cite{PW1}),
 provided that we give for each index $\alpha$ a symplectic structure on
 the tangent bundle of $U_\alpha/B$.
\item At least in the rational motivic case
 (which is equivalent to the rational orientable case), it is possible in principle
 to generalize both the flat and lci case. Indeed, it is visible in the proof that we only need
 a good theory of fundamental classes for the projections $X_\alpha \rightarrow B$.
 The recent work \cite{JinGth} opens the way to define these fundamental classes for
 arbitrary \emph{finite tor-dimension morphisms}, which contains both flat and lci cases.
 The main tool to do that is to extend Jin's work to a suitable representability theorem
 for relative K-theory (as defined in \cite[IV, 3.3]{SGA6}).
\end{enumerate}
\end{rem}

The relative cellular space provides us with a situation where the homotopy Leray spectral sequence is particularly simple.
\begin{prop}
Let $f:X \rightarrow B$ be a proper morphism with an lci relative cellular structure.
 We consider a homotopy module $M$ over $B$ satisfying one of the following conditions:
\begin{itemize}
\item \textit{Homotopical case}.-- $M$ is an oriented homotopy module in $\SH(B)$.
\item \textit{Motivic case}.-- $M$ is a homotopy module in $\DM(B,R)$.
\end{itemize}
We consider the homotopy module $M_X=f^!M$. Then there exists an isomorphism:
$$
f_*(M_X) \simeq \sum_{i=0}^d H^i_\delta(f_*M_X)[-i]
$$
where $d$ is the maximum of the dimension of the fibers of $f$.

Moreover, the homotopy Leray spectral sequence:
$$
E_2^{p,q}=A^p(B,H^q_\delta(f_*M_X)) \Rightarrow A^{p+q}(X,M_X)
$$
degenerates at $E_2$, and the abutting filtration splits giving an isomorphism:
$$
A^{n}(X,M_X) \simeq \bigoplus_{p=0}^n A^p(B,H^{n-p}_\delta(f_*M_X)).
$$
\end{prop}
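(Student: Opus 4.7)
The plan is to reduce the claimed splitting of $f_*(M_X)$ to the cellular decomposition of $f_!(\un_X)$ from Proposition~\ref{prop:cellular_decomposition}, via the $(f_!,f^!)$ adjunction. Since $f$ is proper one has $f_!=f_*$, and the internal-Hom enrichment of this adjunction provides a canonical isomorphism
\[
f_*(M_X)=f_*(f^!M)\simeq\uHom_B(f_!\un_X,\,M).
\]
The orientability hypothesis supplies the input to Proposition~\ref{prop:cellular_decomposition} (in the motivic case $\DM$ is itself oriented; in the homotopical case $M$ carries an $\MGL_B$-module structure, so one invokes the extension of Proposition~\ref{prop:cellular_decomposition} discussed in the remark following its corollary). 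This yields $f_!(\un_X)\simeq\bigoplus_\alpha\un_B(-d_\alpha)[-2d_\alpha]$, and since the index set is finite and Tate twists are invertible we deduce
\[
f_*(M_X)\simeq\bigoplus_\alpha M(d_\alpha)[2d_\alpha].
\]

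Because $-(1)[1]$ is $t_\delta$-exact (Paragraph~\ref{num:basic_t_exact}), the twist $M\mapsto M(d_\alpha)[d_\alpha]$ preserves the heart of the $\delta$-homotopy $t$-structure; consequently each summand $M(d_\alpha)[2d_\alpha]=(M(d_\alpha)[d_\alpha])[d_\alpha]$ is a shift of a homotopy module by $[d_\alpha]$ and is concentrated in a single $t_\delta$-degree, with $d_\alpha\in[0,d]$. Grouping summands by common value of $d_\alpha$ identifies the resulting homotopy modules with the terms $H^i_\delta(f_*M_X)$ appearing in the statement and gives the desired decomposition
\[
f_*(M_X)\simeq\bigoplus_{i=0}^d H^i_\delta(f_*M_X)[-i].
\]

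The degeneration at $E_2$ is then formal: the Postnikov filtration in the $\delta$-homotopy $t$-structure that drives the homotopy Leray spectral sequence (Paragraph~\ref{num:Leray_ssp}) splits as a product of constant towers by the decomposition above, forcing every differential $d_r$ with $r\geq 2$ to vanish. The splitting of the abutment filtration follows by applying $H^n(B,-)$ to the decomposition of $f_*(M_X)$ and invoking Proposition~\ref{prop:Gersten}(3), which identifies $H^p(B,-)$ with Gersten $\delta$-cohomology $A^p_\delta(B,-)$ on any homotopy module, yielding $A^{n}(X,M_X)\simeq\bigoplus_{p=0}^n A^p(B,H^{n-p}_\delta(f_*M_X))$.

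The only non-formal ingredient is the cellular decomposition of $f_!(\un_X)$ from Proposition~\ref{prop:cellular_decomposition}; the remainder consists of the $(f_!,f^!)$ adjunction, the $t_\delta$-exactness of $-(1)[1]$, and the Postnikov-tower formalism for a bounded $t$-structure. The one point requiring a little care is that in the homotopical case the decomposition initially lives in $\MGLmod(B)$, and one must transport it to $\SH(B)$ via the forgetful functor before applying $\uHom_B(-,M)$; this is harmless since all identifications in play (adjunction, splitting, $t_\delta$-exactness of Tate twists) commute with the forgetful functor.
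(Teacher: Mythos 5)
Your overall strategy is the intended one: the paper states this proposition without a written proof, and it is clearly meant to follow from Proposition~\ref{prop:cellular_decomposition} and its corollary, exactly as you do. In the motivic case your argument is complete: $\DM(B,R)$ is oriented, the decomposition $f_!(\un_X)\simeq\bigoplus_\alpha\un_B(-d_\alpha)[-2d_\alpha]$ holds internally in $\DM(B,R)$, and the identity $f_*(f^!M)\simeq\uHom_B(f_!\un_X,M)$, together with the $t_\delta$-exactness of $-(1)[1]$, gives the splitting of $f_*(M_X)$ into shifted homotopy modules in homological degrees $d_\alpha\in[0,d]$ (consistent with the amplitude $[0,d]$ of $f_*$, up to the paper's loose cohomological indexing), whence the degeneration at $E_2$ and the split abutment via Proposition~\ref{prop:Gersten}.

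In the homotopical case, however, the ``little care'' paragraph at the end hides a genuine gap. Proposition~\ref{prop:cellular_decomposition} decomposes the \emph{unit of} $\MGLmod(X)$; its image under the forgetful functor is a decomposition of $f_!(\MGL_X)$ in $\SH(B)$, not of $f_!(\un_X)$, because the right adjoint of a premotivic adjunction does not send the unit to the unit. A decomposition of the sphere-level object $f_!(\un_X)$ in $\SH(B)$ is in fact false in general: it would force the projective bundle formula for every theory, contradicting its failure for Chow--Witt groups, which the paper itself points out. So to apply $\uHom_B(-,M)$ the way you do, you would need $M$ to be a \emph{strict} $\MGL$-module (so that the computation can be performed in $\MGLmod(B)$ and then pushed to $\SH(B)$), whereas the hypothesis ``oriented homotopy module in $\SH(B)$'' only provides a module structure over $\MGL_B$ in the homotopy category. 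This is precisely the issue flagged in the remark following the corollary of Proposition~\ref{prop:cellular_decomposition}, and the intended repair is different from yours: one reruns the proof of Proposition~\ref{prop:cellular_decomposition} directly on $f_*(M_X)=f_!f^!(M)$, using the localization triangles with coefficients in $M$ and the Thom isomorphisms furnished by the weak $\MGL_B$-module structure (via the fundamental classes of \cite{DJK}), which yields $f_*(M_X)\simeq\bigoplus_\alpha M(d_\alpha)[2d_\alpha]$ without ever decomposing $f_!(\un_X)$ in $\SH(B)$. With that substitution, the remainder of your argument (heart-exactness of the Tate twist, splitting of the truncation tower, degeneration at $E_2$, identification of $E_2$-terms and abutment through the Gersten complexes) goes through as written.
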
 

\bibliographystyle{amsalpha}
\bibliography{ssps}

\providecommand{\bysame}{\leavevmode\hbox to3em{\hrulefill}\thinspace}
\providecommand{\MR}{\relax\ifhmode\unskip\space\fi MR }
\providecommand{\MRhref}[2]{%
  \href{http://www.ams.org/mathscinet-getitem?mr=#1}{#2}
}
\providecommand{\href}[2]{#2}
\begin{thebibliography}{MNP13}

\bibitem[AD07]{ADContractible}
A.~Asok and B.~Doran, \emph{On unipotent quotients and some {$\Bbb
  A^1$}-contractible smooth schemes}, Int. Math. Res. Pap. IMRP (2007), no.~2,
  Art. ID rpm005, 51. \MR{2335246}

\bibitem[AD09]{ADExcision}
\bysame, \emph{{$\mathbb A^1$}-homotopy groups, excision, and solvable
  quotients}, Adv. Math. \textbf{221} (2009), no.~4, 1144--1190. \MR{2518635}

\bibitem[ADF17]{AsokDoranFasel}
A.~Asok, B.~Doran, and J.~Fasel, \emph{Smooth models of motivic spheres and the
  clutching construction}, Int. Math. Res. Not. IMRN (2017), no.~6, 1890--1925.
  \MR{3658186}

\bibitem[AF14]{AsokFaselSpheres}
A.~Asok and J.~Fasel, \emph{Algebraic vector bundles on spheres}, J. Topol.
  \textbf{7} (2014), no.~3, 894--926. \MR{3252968}

\bibitem[Ara05]{Arapura}
D.~Arapura, \emph{The {L}eray spectral sequence is motivic}, Invent. Math.
  \textbf{160} (2005), no.~3, 567--589. \MR{2178703}

\bibitem[Ayo07]{Ayoub1}
J.~Ayoub, \emph{Les six op\'erations de {G}rothendieck et le formalisme des
  cycles \'evanescents dans le monde motivique i}, Ast\'erisque, vol. 314, Soc.
  Math. France, 2007.

\bibitem[BBD82]{BBD}
A.A.~Beilinson, J.~Bernstein, and P.~Deligne, \emph{Faisceaux pervers},
  Ast\'erisque \textbf{100} (1982), 5--171.

\bibitem[BD17]{BD1}
M.~Bondarko and F.~D\'{e}glise, \emph{Dimensional homotopy t-structures in
  motivic homotopy theory}, Adv. Math. \textbf{311} (2017), 91--189.
  \MR{3628213}

\bibitem[BGI71]{SGA6}
P.~Berthelot, A.~Grothendieck, and L.~Illusie, \emph{Th\'eorie des
  intersections et th\'eor\`eme de {R}iemann-{R}och}, Lecture Notes in
  Mathematics, vol. 225, Springer-Verlag, 1971, S\'eminaire de G\'eom\'etrie
  Alg\'ebrique du Bois--Marie 1966--67 (SGA~6).

\bibitem[BO74]{BO}
S.~Bloch and A.~Ogus, \emph{Gersten's conjecture and the homology of schemes},
  Ann. Sci. \'Ecole Norm. Sup. (4) \textbf{7} (1974), 181--201 (1975).

\bibitem[Bon10]{Bon1}
M.~V. Bondarko, \emph{Motivically functorial coniveau spectral sequences;
  direct summands of cohomology of function fields}, Doc. Math. (2010),
  no.~Extra vol.: Andrei A. Suslin sixtieth birthday, 33--117. \MR{2804250}

\bibitem[BY18]{BaYa}
T.~Bachmann and M.~Yakerson, \emph{{Towards conservativity of $\mathbb
  G_m$-stabilization}}, arXiv:1811.01541, November 2018.

\bibitem[ByB73]{BB}
A.~Bia\l~ynicki Birula, \emph{Some theorems on actions of algebraic groups},
  Ann. of Math. (2) \textbf{98} (1973), 480--497. \MR{0366940}

\bibitem[CD09]{CD3}
D.-C. Cisinski and F.~D{\'e}glise, \emph{Triangulated categories of mixed
  motives}, {\em Preprint}, available at \url{https://arxiv.org/abs/0912.2110},
  2009.

\bibitem[CD12]{CD2}
\bysame, \emph{Mixed weil cohomologies}, Adv. in Math. \textbf{230} (2012),
  no.~1, 55--130.

\bibitem[CD15]{CD5}
\bysame, \emph{Integral mixed motives in equal characteristics}, Doc. Math.
  (2015), no.~Extra volume: Alexander S. Merkurjev's sixtieth birthday,
  145--194.

\bibitem[CE99]{CE}
H.~Cartan and S.~Eilenberg, \emph{Homological algebra}, Princeton Landmarks in
  Mathematics, Princeton University Press, Princeton, NJ, 1999, With an
  appendix by David A. Buchsbaum, Reprint of the 1956 original. \MR{1731415}

\bibitem[dCM10]{dC-M}
M.~A.~A. de~Cataldo and L.~Migliorini, \emph{The perverse filtration
  and the {L}efschetz hyperplane theorem}, Ann. of Math. (2) \textbf{171}
  (2010), no.~3, 2089--2113. \MR{2680404}

\bibitem[D{\'e}g11]{Deg9}
F.~D{\'e}glise, \emph{Modules homotopiques}, Doc. Math. \textbf{16} (2011),
  411--455.

\bibitem[D{\'e}g12]{Deg11}
\bysame, \emph{Coniveau filtration and motives}, Regulators, Contemporary
  Mathematics, vol. 571, 2012, pp.~51--76.

\bibitem[D{\'e}g13]{Deg10}
\bysame, \emph{Orientable homotopy modules}, Am. J. of Math. \textbf{135}
  (2013), no.~2, 519--560.

\bibitem[D{\'e}g14a]{Deg17}
\bysame, \emph{On the homotopy heart of mixed motives},
  \url{http://deglise.perso.math.cnrs.fr/docs/2014/modhtpb.pdf}, 2014.

\bibitem[D{\'e}g14b]{Deg13}
\bysame, \emph{Suite spectrale du coniveau et
  {$t$}-structure homotopique}, Ann. Fac. Sci. Toulouse Math. (6) \textbf{23}
  (2014), no.~3, 591--609.

\bibitem[D{\'e}g18]{Deg16}
\bysame, \emph{Bivariant theories in motivic stable homotopy}, Doc.
  Math. \textbf{23} (2018), 997--1076.

\bibitem[DF17]{DF2}
F.~D{\'e}glise and J.~Fasel, \emph{The milnor-witt motivic ring spectrum and
  its associated theories}, arXiv:1708.06102, 2017.

\bibitem[DF18]{DuboulozFasel}
A.~Dubouloz and J.~Fasel, \emph{Families of {$\Bbb A^1$}-contractible affine
  threefolds}, Algebr. Geom. \textbf{5} (2018), no.~1, 1--14. \MR{3734108}

\bibitem[DJK18]{DJK}
F.~D{\'e}glise, F.~Jin, and A.~Khan, \emph{Fundamental classes in motivic
  homotopy theory}, arXiv:1805.05920v2, 2018.

\bibitem[DK01]{DavisKirk}
J.~F. Davis and P.~Kirk, \emph{Lecture notes in algebraic topology}, Graduate
  Studies in Mathematics, vol.~35, American Mathematical Society, Providence,
  RI, 2001. \MR{1841974}

\bibitem[Dou59]{Douady}
A.~Douady, \emph{{La suite spectrale d'Adams: structure multiplicative, exp.
  19}}, {S\'eminaire Henri Cartan}, {Ecole Normale Sup\'erieure}, vol. tome 11,
  n°2, 1958-1959, pp.~1--13.

\bibitem[DP{\O}18]{DuboulozPauliOstvaer}
A.~Dubouloz, S.~Pauli, and P.-A. {\O}stv{\ae}r, \emph{{{${\mathbb
  A}^1$}-contractibility of affine modifications}}, {\em Preprint}, available
  at \url{https://arxiv.org/abs/1805.08959}, 2018.

\bibitem[Fel18]{Feld}
N.~Feld, \emph{Milnor-witt cycle modules}, arXiv:1811.12163, 2018.

\bibitem[GD67]{EGA4}
A.~Grothendieck and J.~Dieudonn\'e, \emph{\'{E}l\'ements de g\'eom\'etrie
  alg\'ebrique. {IV}. \'{E}tude locale des sch\'emas et des morphismes de
  sch\'emas {IV}}, Publ. Math. IHES \textbf{20, 24, 28, 32} (1964-1967).

\bibitem[Jin16]{Jin1}
F.~Jin, \emph{Borel–moore motivic homology and weight structure on mixed
  motives}, Math. Zeit. \textbf{283, Issue 3-4} (2016), 1149--1183.

\bibitem[Jin18]{JinGth}
\bysame, \emph{Algebraic g-theory in motivic homotopy categories},
  arXiv:1806.03927, 2018.

\bibitem[Kar00]{Karpenko}
N.~A. Karpenko, \emph{Cohomology of relative cellular spaces and of isotropic
  flag varieties}, Algebra i Analiz \textbf{12} (2000), no.~1, 3--69.
  \MR{1758562}

\bibitem[Lev06]{lev1}
M.~Levine, \emph{Chow's moving lemma and the homotopy coniveau tower},
  $K$-Theory \textbf{37} (2006), no.~1-2, 129--209.

\bibitem[Lev08]{Lev2}
\bysame, \emph{The homotopy coniveau tower}, J. Topol. \textbf{1} (2008),
  no.~1, 217--267.

\bibitem[Lur18]{LurieHA}
J.~Lurie, \emph{Higher algebra},
  \url{http://www.math.harvard.edu/~lurie/papers/HA.pdf}, Sep. 2018.

\bibitem[McC01]{McC}
J.~McCleary, \emph{A user's guide to spectral sequences}, second ed., Cambridge
  Studies in Advanced Mathematics, vol.~58, Cambridge University Press,
  Cambridge, 2001.

\bibitem[MNP13]{MNP}
J.~P. Murre, J.~Nagel, and C.~A.~M. Peters, \emph{Lectures on the theory
  of pure motives}, University Lecture Series, vol.~61, American Mathematical
  Society, Providence, RI, 2013. \MR{3052734}

\bibitem[Mor03]{Mor1}
F.~Morel, \emph{An introduction to {$\mathbf{A}^1$}-homotopy theory},
  Contemporary Developments in {A}lgebraic {K}-theory, ICTP Lecture notes,
  vol.~15, 2003, pp.~357--441.

\bibitem[MV99]{MV}
F.~Morel and V.~Voevodsky, \emph{{${\mathbf A}^1$}-homotopy theory of schemes},
  Inst. Hautes \'Etudes Sci. Publ. Math. (1999), no.~90, 45--143 (2001).
  \MR{1813224 (2002f:14029)}

\bibitem[Nor02]{Nori}
M.~V. Nori, \emph{Constructible sheaves}, Algebra, arithmetic and geometry,
  {P}art {I}, {II} ({M}umbai, 2000), Tata Inst. Fund. Res. Stud. Math.,
  vol.~16, Tata Inst. Fund. Res., Bombay, 2002, pp.~471--491. \MR{1940678}

\bibitem[NZ06]{NZ}
A.~Nenashev and K.~Zainoulline, \emph{Oriented cohomology and motivic
  decompositions of relative cellular spaces}, J. Pure Appl. Algebra
  \textbf{205} (2006), no.~2, 323--340. \MR{2203620}

\bibitem[Par96]{Paranjape}
K.~H. Paranjape, \emph{Some spectral sequences for filtered complexes and
  applications}, J. Algebra \textbf{186} (1996), no.~3, 793--806. \MR{1424593}

\bibitem[PW10]{PaninWalterPontryaginClasses}
I.~Panin and C.~Walter, \emph{Quaternionic {G}rassmannians and {P}ontryagin
  classes in algebraic geometry}, {\em Preprint} available at
  \url{http://arxiv.org/abs/1011.0649}, 2010.

\bibitem[PW18]{PW1}
I.~{Panin} and C.~{Walter}, \emph{{Quaternionic Grassmannians and Borel classes
  in algebraic geometry}}, arXiv: 1011.0650v2, March 2018.

\bibitem[Ros96]{Ros}
M.~Rost, \emph{Chow groups with coefficients}, Doc. Math. J. (1996), 319--393.

\bibitem[VD74]{DolgachevWeisfeiler}
B.~Ju. Ve\u{\i}sfe\u{\i}ler and I.~V. Dolga\v{c}ev, \emph{Unipotent group
  schemes over integral rings}, Izv. Akad. Nauk SSSR Ser. Mat. \textbf{38}
  (1974), 757--799. \MR{0376697}

\bibitem[Ver96]{Verdier}
J.-L. Verdier, \emph{Des cat\'{e}gories d\'{e}riv\'{e}es des cat\'{e}gories
  ab\'{e}liennes}, Ast\'{e}risque (1996), no.~239, xii+253 pp. (1997), With a
  preface by Luc Illusie, Edited and with a note by Georges Maltsiniotis.
  \MR{1453167}

\bibitem[wc15]{stack}
Collective work (198~collaborators), \emph{The stacks project}, version
  10/11/2018, 2015.

\end{thebibliography}
\Addresses
\end{document}